\documentclass[12pt,a4paper]{article}
\usepackage{amsmath,amstext,amssymb,amscd, color}
\usepackage{graphicx}

\usepackage[english]{babel}
%for russian hyphenation

\oddsidemargin=5mm \textwidth=155mm \textheight=235mm \topmargin=-10mm

\sloppy

\newcommand{\Xcomment}[1]{}

\newtheorem{theorem}{Theorem}[section]
\newtheorem{lemma}[theorem]{Lemma}
\newtheorem{corollary}[theorem]{Corollary}

\newtheorem{prop}[theorem]{Proposition}

\newcommand{\SEC}[1]{\ref{sec:#1}}  % reference to section
\newcommand{\SSEC}[1]{\ref{ssec:#1}}  % reference to subsection
  % reference to subsubsection
  % reference to Theorem

\makeatletter \@addtoreset{equation}{section} \makeatother

\newenvironment{proof}{\noindent{\bf Proof}~}%
{\hfill$\qed$\medskip}

\def\qed{ \ \vrule width.1cm height.3cm depth0cm}

%                         numbered textual expression

 %
\newenvironment{numitem1}{\refstepcounter{equation}\begin{enumerate}%
\item[(\thesection.\arabic{equation})]}{\end{enumerate}}

\newcommand{\refeq}[1]{(\ref{eq:#1})}  % reference to equation

% ------------------------  my section header
 \makeatletter
\renewcommand{\section}{\@startsection{section}{1}{0pt}%
{-3.5ex plus -1ex minus -.2ex}{2.3ex plus .2ex}%
{\normalfont\Large}}
 \makeatother

%  \Xcomment{
% ------------------------  my subsection header
 \makeatletter
\renewcommand{\subsection}{\@startsection{subsection}{2}{0pt}%
{-3.0ex plus -1ex minus -.2ex}{-1.5ex plus .2ex}%
{\normalfont\large\bf}}
 \makeatother
% }

 \Xcomment{
% ------------------------  my subsection header - variant
 \makeatletter
\renewcommand{\subsection}{\@startsection{subsection}{2}{0pt}%
{-3.0ex plus -1ex minus -.2ex}{1.5ex plus .2ex}%
{\normalfont\large\bf}}
%{\normalfont\large}}
%{\normalfont\normalsize\bf}}
  }
 \makeatother

\def\Rset{{\mathbb R}}
\def\Zset{{\mathbb Z}}

\def\Ascr{{\cal A}}
\def\Bscr{{\cal B}}
\def\Cscr{{\cal C}}
\def\Dscr{{\cal D}}

\def\Fscr{{\cal F}}
\def\Gscr{{\cal G}}

\def\Kscr{{\cal K}}

\def\Mscr{{\cal M}}
\def\Nscr{{\cal N}}
\def\Oscr{{\cal O}}

\def\Sscr{{\cal S}}

\def\Wscr{{\cal W}}

\def\frakS{\mathfrak{S}}

\def\frakB{\mathfrak{B}}
\def\frakD{\mathfrak{D}}

\def\Inver{{\rm Inv}}

\def\tilde{\widetilde}
\def\hat{\widehat}
\def\bar{\overline}
\def\eps{\epsilon}

\def\bfC{{\bf C}}
\def\bfQ{{\bf Q}}
\def\bfS{{\bf S}}

\def\bfW{{\bf W}}
\def\bfCsym{\bf{sym\mbox{-}C}}
\def\bfSsym{\bf{sym\mbox{-}S}}
\def\bfWsym{\bf{sym\mbox{-}W}}
\def\bfQsym{\bf{sym\mbox{-}Q}}
\def\bfMsym{\bf{sym\mbox{-}M}}
\def\bfDsym{\bf{sym\mbox{-}D}}

\def\Tmin{T^{\rm min}}

\def\Poss{{\bf PS}}
\def\Possym{{\bf PS}^{\rm sym}}
\def\Posw{{\bf PW}}

\def\Zfr{{Z^{\,\rm fr}}}
\def\Zrear{{Z^{\,\rm rear}}}

\def\Zpfr{Z'^{\,\rm fr}}
\def\Zprear{Z'^{\,\rm rear}}

\def\Cfr{C^{\,\rm fr}}
\def\Cpfr{C'^{\,\rm fr}}
\def\Crear{C^{\,\rm rear}}
\def\Castfr{C^{\ast\,\rm fr}}
\def\Castrear{C^{\ast\,\rm rear}}

\def\frakDst{\frakD^{\,\rm st}}
\def\frakDant{\frakD^{\,\rm ant}}
\def\frakDfr{\frakD^{\,\rm fr}}
\def\frakDrear{\frakD^{\,\rm rear}}

\def\frakDastst{\frakD^{\ast\,\rm st}}
\def\frakDastant{\frakD^{\ast\,\rm ant}}

\def\Qst{Q^{\,\rm st}}
\def\Qant{Q^{\,\rm ant}}
\def\Ist{I^{\,\rm st}}
\def\Iant{I^{\,\rm ant}}

\def\Inver{{\rm Inv}}

\def\Gammas{\Gamma^{\,\rm s}}
\def\Gammaw{\Gamma^{\,\rm w}}
\def\Gammasym{\Gamma^{\,\rm sym}}

\def\Klow{K^{\,\rm low}}
\def\Kup{K^{\,\rm up}}
\def\Kuppl{K^{\,\rm up+}}
\def\Zlow{Z^{\,\rm low}}
\def\Zup{Z^{\,\rm up}}
\def\Tlow{T^{\,\rm low}}
\def\Tup{T^{\,\rm up}}
\def\lambdalow{\lambda^{\,\rm low}}
\def\lambdaup{\lambda^{\,\rm up}}
\def\Llow{L^{\,\rm low}}
\def\Lup{L^{\,\rm up}}
\def\Clow{C^{\,\rm low}}
\def\Cup{C^{\,\rm up}}
\def\Cplow{C'^{\,\rm low}}

\def\Zplow{Z'^{\,\rm low}}
\def\Zpup{Z'^{\,\rm up}}
\def\Qplow{Q'^{\,\rm low}}
\def\Qpup{Q'^{\,\rm up}}

\def\deltain{\delta^{\,\rm in}}

\def\dsymm{{\lozenge}}

%--------------

\begin{document}

 \title{Flips in symmetric separated set-systems}

 \author{Vladimir I.~Danilov
 \thanks{Central Institute of Economics and
Mathematics of the RAS, 47, Nakhimovskii Prospect, 117418 Moscow, Russia;
email: danilov@cemi.rssi.ru.}
 \and
Alexander V.~Karzanov
\thanks{Central Institute of Economics and Mathematics of
the RAS, 47, Nakhimovskii Prospect, 117418 Moscow, Russia; email:
akarzanov7@gmail.com. Corresponding author. }
  \and
Gleb A.~Koshevoy
\thanks{The Institute for Information Transmission Problems of
the RAS, 19, Bol'shoi Karetnyi per., 127051 Moscow, Russia, and HSE University;
email:koshevoyga@gmail.com. Supported in part by Laboratory of Mirror Symmetry
NRU HSE, RF Government grant, ag. No. 14.641.31.0001.}
  }

\date{}

 \maketitle

\begin{abstract}
%Posted on Arxiv:2102.08974[math.CO].

For a positive integer $n$, a collection $\Sscr$ of subsets of
$[n]=\{1,\ldots,n\}$ is called \emph{symmetric} if $X\in \Sscr$ implies
$X^\ast\in\Sscr$, where $X^\ast:=\{i\in [n]\colon n-i+1\notin X\}$ (the
involution $\ast$ was introduced by Karpman). Leclerc and Zelevinsky showed
that the set of maximal strongly (resp. weakly) separated collections in
$2^{[n]}$ is connected via flips, or mutations, ``in the presence of six (resp.
four) witnesses''. We give a symmetric analog of those results, by  showing
that each maximal symmetric strongly (weakly) separated collection in $2^{[n]}$
can be obtained from any other one by a series of special symmetric local
transformations, so-called \emph{symmetric flips}. Also we establish the
connectedness via symmetric flips for the class of maximal symmetric
$r$-separated collections in $2^{[n]}$ when $n,r$ are even (where sets
$A,B\subseteq [n]$ are called $r$-separated if there are no elements $i_0<i_1<
\cdots <i_{r+1}$ in $[n]$ which alternate in $A\setminus B$ and $B\setminus
A$). This is related to a symmetric version of higher Bruhat orders.

These results are obtained as consequences of our study of related geometric
objects: symmetric rhombus and combined tilings and symmetric cubillages.
  \medskip

\noindent\emph{Keywords}: strong separation, weak separation, chord separation,
higher separation, purity phenomenon, rhombus tiling, combined tiling,
cubillage, higher Bruhat order
   \end{abstract}

\baselineskip=15pt
\parskip=1pt
%----------------------- Sec. 1

\section{Introduction}  \label{sec:intr}

We fix a positive integer $n$ and interpret the elements of the set
$[n]:=\{1,2,\ldots,n\}$ as \emph{colors}. For each $i\in[n]$, the color $n-i+1$
is regarded as \emph{complementary} to $i$ and denoted as $i^\circ$. Following
Karpman~\cite{karp}, for  $X\subseteq[n]$, define the set
 \begin{equation} \label{eq:Xast}
 X^\ast:=\{i\in[n]\colon i^\circ\notin X\}.
 \end{equation}
One can see that $(X^\ast)^\ast=X$; so the relation $\ast$ gives an involution
on the set $2^{[n]}$ of all subsets of $[n]$. We say that the sets $X$ and
$X^\ast$ are \emph{$\ast$-symmetric}, or, simply, \emph{symmetric}, to each
other. When $X$ coincides with $X^\ast$, it is called \emph{self-symmetric}.
Accordingly, a collection $\Sscr\subseteq 2^{[n]}$ is called \emph{symmetric}
if $X\in\Sscr$ implies $X^\ast\in\Sscr$.

Recently, extending a result in~\cite{karp}, it was proved in~\cite{DKK4} that
symmetric strongly, weakly and chord separated collections $\Sscr$ in $2^{[n]}$
possess the property of \emph{purity}, which means that if $\Sscr$ is maximal
by inclusion (among all collections of the given type), then it is maximal by
size. This matches the purity behavior for usual strongly, weakly and chord
separated collections in $2^{[n]}$ (proved in~\cite{LZ}, \cite{DKK1},
\cite{gal}, respectively).  Recall that
  \begin{itemize}
\item Sets $A,B\subseteq[n]$ are called
\emph{strongly separated} (from each other) if there are no three elements
$i<j<k$ of $[n]$ such that one of $A-B$ and $B-A$ contains $i,k$, and the other
contains $j$.
\item
Sets $A,B\subseteq[n]$ are called \emph{chord separated} (or
\emph{2-separated}) if there are no elements $i<j<k<\ell$ of $[n]$ such that
one of $A-B$ and $B-A$ contains $i,k$, and the other contains $j,\ell$.
\item
Sets $A,B\subseteq[n]$ are called \emph{weakly separated} if they are chord
separated and the additional condition holds: if $A$ surrounds $B$ and
$B-A\ne\emptyset$ then $|A|\le|B|$, and if $B$ surrounds $A$ and
$A-B\ne\emptyset$ then $|B|\le|A|$.
  \end{itemize}

\noindent Accordingly,  a collection $\Ascr\subseteq 2^{[n]}$ of subsets of
$[n]$ is called strongly (chord, weakly) separated if any two members of
$\Ascr$ are strongly (resp. chord, weakly) separated.

(Hereinafter, for sets $A,B\subseteq[n]$, $|A|$ is the size (the number of
elements) of $A$; $A-B$ denotes the set difference $\{i\colon A\ni i\notin
B\}$; we write $A<B$ if the maximal element $\max(A)$ of $A$ is smaller than
the minimal element $\min(B)$ of $B$, letting $\min(\emptyset):=\infty$ and
$\max(\emptyset):=-\infty$; and we say that $A$ \emph{surrounds} $B$ if
$\min(A-B)<\min(B-A)$ and $\max(A-B)>\max(B-A)$.)

For brevity, in what follows we refer to strongly, weakly, and chord separated
collections as \emph{s-}, \emph{w-}, and \emph{c-collections}, respectively.
The sets (classes) of maximal collections among those in $2^{[n]}$ are denoted
by $\bfS_n$, $\bfW_n$, and $\bfC_n$, respectively. Their $\ast$-symmetric
counterparts are denoted by $\bfSsym_n$, $\bfWsym_n$, and $\bfCsym_n$,
respectively.

It is well-known that each of $\bfS_n,\bfW_n,\bfC_n$ is extended to a poset
with a unique minimal and a unique maximal  elements; in particular, such
posets are connected. Here the poset structures arise when the collections
forming these classes are linked by binary relations, or ``edges'', where each
edge corresponds to a local transformation, called a \emph{flip} (or
mutation), which turns one collection into a ``neighboring'' collection in this
class. See~\cite{LZ,gal} for details.

Leclerc and Zelevinsky~\cite{LZ} revealed the poset structures on $\bfS_n$ and
$\bfW_n$ (establishing flips ``in the presence of six and four witnesses'',
respectively), and the poset structure on $\bfC_n$ was demonstrated by
Galashin~\cite{gal}.

In this paper, we introduce local transformations, called \emph{symmetric
flips}, on the members of $\bfSsym_n$, $\bfWsym_n$, and $\bfCsym_n$ (with $n$
even in the last case), and show that such flips endow these classes with the
structure of connected graphs (though not necessarily posets). In other words,
for any two maximal symmetric collections in each class, we can obtain one from
the other by a series of flips within this class.

Extending some of these results, we further introduce and study symmetric flips
for more sophisticated classes, namely, ones formed by size-maximal symmetric
strongly $r$-separated collections in $2^{[n]}$. Here for an integer $r\ge 1$,
sets $A,B\subseteq [n]$ are called $r$-\emph{separated} if there are no
elements $i_0<i_1< \cdots <i_{r+1}$ of $[n]$ alternating in $A-B$ and $B-A$.
Accordingly, a collection $\Sscr\subseteq 2^{[n]}$ is called strongly
$r$-separated if any two of its members are such. Usually, when speaking of
such sets and collections, the adjective ``strong'' will be omitted for
brevity. (So s- and c-collections are just 1- and 2-separated ones,
respectively.)

An essential part of our study will be focused on the case when both $n$ and
$r$ are even, yielding a generalization for symmetric c-collections with $n$
even. We show that in this case the symmetric flip structure forms a connected
poset. Moreover, we explain that this poset is associated with a structure on
$\binom{[n]}{r+1}$ that can be interpreted as a symmetric version of higher
Bruhat orders, which is related to \emph{type C} parameterized by $(n,r+1)$
(where $\binom{[n]}{m}$ is formed by the $m$-element subsets of $[n]$). (Recall
that the notion of higher Bruhat orders was introduced by Manin and
Schekhtman~\cite{MS} as a generalization of the classical weak Bruhat order on
permutations.) A wider discussion on higher Bruhat orders of types B and C will
appear in the forthcoming paper~\cite{DKK6}.

As is shown in Galashin and Postnikov~\cite{GP}, the purity behavior does not
continue to hold when $\min\{r,n-r\}\ge 3$; namely, there exist maximal by
inclusion $r$-separated collection in $2^{[n]}$ which are not maximal by size.
A similar behavior takes place for symmetric $r$-separated collections as well.

In light of this, we write $s_{n,r}$ for the maximal possible size $|\Sscr|$
among \emph{all} (not necessarily symmetric) $r$-separated collections $\Sscr$
in $2^{[n]}$, and refer to such collections as \emph{size-maximal}. The class
of $r$-separated collections in $2^{[n]}$ which are size-maximal and
simultneously $\ast$-symmetric is denoted as $\bfSsym_{n,r}$. (Note that a
priory $\bfSsym_{n,r}$ may be empty; e.g. this happens when $n$ is odd and
$r=1$.) When both $n,r$ are even, we show that $\bfSsym_{n,r}\ne\emptyset$,
introduce symmetric flips for the members of $\bfSsym_{n,r}$ and establish a
poset structure on $\bfSsym_{n,r}$ (as mentioned above).

In order to obtain the above-mentioned results, we essentially attract
geometric methods. We are based on the nice bijections between (i) maximal
s-collections and \emph{rhombus tilings} (see~\cite[Th.~1.6]{LZ} where the
language of pseudo-line arrangements, dual to rhombus tilings, is used), (ii)
maximal w-collections and \emph{combined tilings} (see~\cite{DKK2}), and (iii)
between size-maximal $r$-separated collections and fine zonotopal tilings, or
\emph{cubillages}, on cyclic zonotopes of dimension $r+1$. In these cases, the
sets in each collection $\Sscr$ as above are encoded the vertices of the
corresponding geometric object. (For various aspects of cubillages, see
survey~\cite{DKK3}.)

It turns out that similar relationships between set-systems and geometric
objects take place in the symmetric versions as well (for symmetric s-, w-, and
c-cases, see~\cite{DKK4}). Using such correspondences, we study symmetric
tilings and cubillages, introduce symmetric flips on them, and examine the
obtained flip graph in each case.
 \medskip

This paper is organized as follows. Section~\SEC{prelim} reviews basic facts on
rhombus and combined tilings needed to us. Section~\SEC{scoll} is devoted to
flips in symmetric s-collections (yielding Theorem~\ref{tm:all-s-flips}), and
Subsection~\SSEC{w-n-even} to flips in symmetric w-collections (yielding
Corollary~\ref{cor:all-w-flips}), both dealing with an even number $n$ of
colors. The case of symmetric s- and w-collections with $n$ odd is considered
in Subsection~\SSEC{sw-n-odd}. Both Sections~\SEC{scoll} and~\SEC{wcoll} are
well illustrated to facilitate understanding.

Then we proceed to a study of the flip structure for symmetric $r$-separated
collections in $2^{[n]}$ and symmetric $d$-dimensional cubillages on the cyclic
zonotope $Z(n,d=r+1)$. Section~\SEC{rsep-cubil} gives necessary definitions and
reviews  known results on important ingredients of cubillages that will be used
later, in particular, \emph{membranes} (subcomplexes of a cubillage that admit
bijective projections to cubillages of the previous dimension).  In
Section~\SEC{symm-r-even} we show that for $n,r$ even, the symmetric flip
structure forms a poset with one minimal and one maximal elements (see
Theorem~\ref{tm:Gammasym} and Corollary~\ref{cor:flip-rsepar}). This leads to a
symmetric higher Bruhat order (of type C parameterized by $(n,d=r+1)$), which
is discussed in Section~\SEC{Bruhat} and summarized in
Theorem~\ref{tm:BruhatC}. The main part of the paper finishes with
Section~\SEC{gen-theorems} which presents assertions of a general character,
showing that a symmetric cubillage contains a symmetric membrane
(Theorem~\ref{tm:cub-membr}), and that any symmetric cubillage can be lifted as
a membrane in a symmetric cubillage of the next dimension
(Theorem~\ref{tm:membr-cub}). As a consequence, any cyclic zonotope $Z(n,d)$
with $n$ even has at least one symmetric cubillage
(Corollary~\ref{cor:zon-sym-cub}).

Three additional settings are discussed in appendixes to this paper. Appendix~A
considers symmetric $r$-separated collections in $2^{[n]}$ when $n$ is even but
$r$ is odd, while Appendix~B deals with $n$ odd and $r$ even. In its turn,
Appendix~C is devoted to symmetric \emph{weakly $r$-separated} set-systems. In
each of these three cases, we specify the definitions of appropriate symmetric
flips and raise conjectures whose validity would imply the connectedness of the
corresponding flip graphs.

%----------------------- Sec.2

\section{Preliminaries}  \label{sec:prelim}

In this section we recall (or specify) the definitions of zonogon, rhombus and
combined tilings and flips in them, and review some basic properties of these
objects that we will use later on.

As before, let $n\in\Zset_{>0}$.
An \emph{interval} in $[n]$ is a set of
the form $\{a,a+1,\ldots,b\}\subseteq[n]$, denoted as $[a..b]$ (so
$[n]=[1..n]$). For disjoint subsets $A$ and $\{a,\ldots,b\}$ of $[n]$, we
will use the abbreviated notation $Aa\ldots b$ for $A\cup\{a,\ldots,b\}$, and
write $A-c$ for $A-\{c\}$ when $c\in A$.\medskip

\noindent\textbf{Zonogon.} ~Let $\Xi$ be a set of $n$ vectors
$\xi_i=(x_i,y_i)\in \Rset^2$ such that
  \begin{numitem1} \label{eq:xi-x-y}
  $x_1<\cdots <x_n$ and $y_i=1-\delta_i$, $i=1,\ldots,n$,
    \end{numitem1}
where each $\delta_i$ is a sufficiently small positive real. For our purposes,
we additionally assume that
  \begin{numitem1} \label{eq:additional}
(i) $\Xi$ satisfies the \emph{strict concavity} condition:  for any $i<j<k$,
there exist $\lambda,\lambda'\in \Rset_{>0}$ such that $\lambda+\lambda'>1$ and
$\xi_j=\lambda\xi_i+\lambda'\xi_k$; ~and (ii) the vectors in $\Xi$ are
$\Zset_2$-independent (i.e., all 0,1-combinations of these vectors are
different).
  \end{numitem1}

The \emph{zonogon} generated by $\Xi$ is the Minkowski sum
of segments $[0,\xi_i]$:
  $$
Z=Z(\Xi):=\{\lambda_1\xi_1+\ldots+ \lambda_n\xi_n\colon \lambda_i\in\Rset,\;
0\le\lambda_i\le 1,\; i=1,\ldots,n\}.
  $$
The choice of $\Xi$ is usually not important to us (subject
to~\refeq{xi-x-y},\refeq{additional}), and we may denote $Z$ as $Z(n,2)$. Each
subset $X\subseteq[n]$ is identified with the point $\sum_{i\in X} \xi_i$ in
$Z$; so, due to~\refeq{additional}(ii), different subsets are identified with
different points.

Besides $\xi_1,\ldots,\xi_n$, we will often use the vectors $\eps_{ij}:=\xi_j-\xi_i$ for
$1\le i<j\le n$.
  \medskip

\noindent\textbf{Rhombus and combined tilings.} ~A tiling of these sorts is a
subdivision of the zonogon $Z=Z(\Xi)$ into convex polygons specified below and
called \emph{tiles}. Any two intersecting tiles share a common vertex or edge,
and each edge of the boundary of $Z$ belongs to exactly one tile. We associate
to a tiling $T$ the planar graph $(V_T,E_T)$ whose vertex set $V_T$ and edge
set $E_T$ are formed by the vertices and edges occurring in tiles. Each vertex
is (a point identified with) a subset of $[n]$. And each edge is a line segment
viewed as a parallel translation of either $\xi_i$ or $\eps_{ij}$ for some
$i<j$.  In the former case, it is called an edge of \emph{type} or \emph{color}
$i$, or an $i$-\emph{edge}, and in the latter case, an edge of \emph{type}
$ij$, or an $ij$-\emph{edge}. These edges are directed accordingly. In
particular, the \emph{left} (\emph{right}) \emph{boundary} of $T$ forms the
directed path $(v_0,e_1,v_1,\ldots, e_n, v_n)$ in which each vertex $v_i$
represents the interval $[i]$ (resp. the interval $[(n+1-i)..n]$).
\medskip

In a \emph{rhombus tiling}, each tile $\tau$ is a parallelogram with edges of
types $i$ and $j$. For brevity we refer to $\tau$  as a \emph{rhombus}, or an
$ij$-\emph{rhombus} (where $i<j$),  and denote it as $\lozenge=\lozenge(X|ij)$,
where $X$ is its bottommost vertex, or simply the \emph{bottom} of $\lozenge$.
\smallskip

In a \emph{combined tiling}, or a \emph{combi} for short, there are three sorts
of tiles, namely, $\Delta$-tiles, $\nabla$-tiles, and lenses, as illustrated in
the picture below.

\vspace{-0.2cm}
\begin{center}
\includegraphics{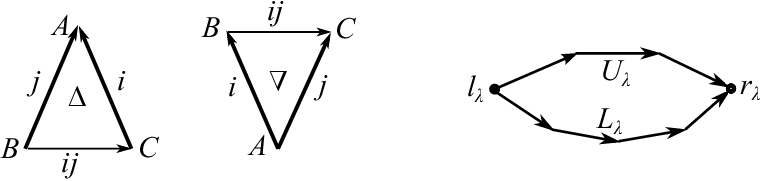}
\end{center}
\vspace{-0.2cm}

I. A $\Delta$-\emph{tile} ($\nabla$-\emph{tile}) is a triangle with vertices
$A,B,C\subseteq[n]$ and edges $(B,A),(C,A),(B,C)$ (resp. $(A,C),(A,B),(B,C)$)
of types $j$, $i$ and $ij$, respectively, where $i<j$. We denote this tile as
$\Delta(A|BC)$ (resp. $\nabla(A|BC)$).

II. In a \emph{lens} $\lambda$, the boundary is formed by two directed paths
$U_\lambda$ and $L_\lambda$, with at least two edges in each, having the same
beginning (or \emph{left}) vertex $\ell_\lambda$ and the same end
(\emph{right}) vertex $r_\lambda$. The \emph{upper boundary}
$U_\lambda=(v_0,e_1,v_1,\ldots,e_p,v_p)$ is such that $v_0=\ell_\lambda$,
$v_p=r_\lambda$, and $v_k=Xi_k$ for $k=0,\ldots,p$, where $p\ge 2$,
$X\subset[n]$ and $i_0<i_1<\cdots <i_p$ (so $k$-th edge $e_k$ is of type
$i_{k-1}i_k$). And the \emph{lower boundary}
$L_\lambda=(u_0,e'_1,u_1,\ldots,e'_q,u_q)$ is such that $u_0=\ell_\lambda$,
$u_q=r_\lambda$, and $u_m=Y-j_m$ for $m=0,\ldots,q$, where $q\ge 2$,
$Y\subseteq [n]$ and $j_0>j_1>\cdots>j_q$ (so $m$-th edge $e'_m$ is of type
$j_mj_{m-1}$). Then $Y=Xi_0j_0=Xi_pj_q$, implying $i_0=j_q$ and $i_p=j_0$. Note
that $X$ as well as $Y$ need not be a vertex in a combi. Due
to~\refeq{additional}(i), $\lambda$ is a convex polygon of which vertices are
exactly the vertices of $U_\lambda\cup L_\lambda$.

Note that any rhombus tiling turns into a combi without lenses by splitting
each $ij$-rhombus $\lozenge$ into two ``semi-rhombi'' $\Delta$ and $\nabla$ by
drawing the edge of type $ij$ connecting the left and right vertices.

Two properties of tilings are of a fundamental character:
 \begin{numitem1} \label{eq:s-coll_rt}
\cite{LZ} the map $T\mapsto V_T=:\Sscr$ gives a bijection between the set of
rhombus tilings $T$ on $Z(n,2)$ and the set of maximal s-collections $\Sscr$ in
$2^{[n]}$;
  \end{numitem1}
  \begin{numitem1} \label{eq:w-coll_combi}
\cite{DKK2} the map $K\mapsto V_K=:\Wscr$ gives a bijection between the set of
combies $K$ on $Z(n,2)$ and the set of maximal w-collections $\Wscr$ in
$2^{[n]}$.
  \end{numitem1}
(Note that in~\cite{LZ} the bijection exhibited in~\refeq{s-coll_rt} is given
in equivalent terms to rhombus tilings, namely, via the commutation classes of
pseudo-line arrangements.)
 \medskip

\noindent\textbf{Flips in set-systems and tilings.} Leclerc and
Zelevinsky~\cite{LZ} established two important facts on mutations, or flips, in
strongly and weakly separated collections:
  \begin{numitem1} \label{eq:set-flip}
Let $i<j<k$ be three elements of $[n]$ and let $X\subseteq[n]-\{i,j,k\}$.
 \begin{itemize}
\item[(i)]
If an s-collection $\Sscr\subseteq 2^{[n]}$ contains a set $U\in\{Xj,Xik\}$ and
the six sets (``witnesses'') $X$, $Xi$, $Xk$, $Xij$, $Xjk$, $Xijk$, then
replacing in $\Sscr$ the set $U$ by the other member of $\{Xj,Xik\}$, we again
obtain an s-collection.
\item[(ii)]
If a w-collection $\Wscr\subseteq 2^{[n]}$ contains a set $U\in\{Xj,Xik\}$ and
the four sets (``witnesses'') $Xi$, $Xk$, $Xij$, $Xjk$, then replacing in
$\Wscr$ the set $U$ by the other member of $\{Xj,Xik\}$, we again obtain a
w-collection.
  \end{itemize}
  \end{numitem1}

\noindent Following~\cite{LZ}, in case~(i), we call the replacement  $Xj$ by
$Xik$  (resp. $Xik$ by $Xj$) a \emph{raising} (resp. \emph{lowering})
\emph{flip} in $\Sscr$ ``in the presence of six witnesses'' (where the
adjectives are justified by the inequality $|Xj|<|Xik|$). A similar
transformation in case~(ii) is called a \emph{raising} (resp. \emph{lowering})
\emph{flip} in $\Wscr$ ``in the presence of four witnesses''. Let $\Gammas_n$
($\Gammaw_n$) be the directed graph whose vertex set is formed by the maximal
s-collections (resp. w-collections) in $2^{[n]}$ and whose edges are formed by
the pairs $(\Ascr,\Bscr)$ of collections where $\Bscr$ is obtained by one
raising flip from $\Ascr$. Since $\sum(|B|\colon B\in\Bscr)=\sum(|A|\colon
A\in\Ascr)+1$, both graphs $\Gammas_n$ and $\Gammaw_n$ are acyclic (have no
directed cycles). So they determine posets on the sets (classes) $\bfS_n$ and
$\bfW_n$, denoted as $\Poss_n$ and $\Posw_n$, respectively. These posets are
connected; moreover,
  \begin{numitem1} \label{eq:Gammasw}
both $\Poss_n$ and $\Posw_n$ have unique minimal and maximal elements, the
former consisting of all intervals, and the latter of all co-intervals in $[n]$
 \end{numitem1}
(where a \emph{co-interval} is the complement of an interval to $[n]$). Relying
on~\refeq{s-coll_rt} and~\refeq{w-coll_combi}, one can express the above flips
in a geometric form, as follows. Let $T$ be a rhombus or combined tiling on
$Z(n,2)$ and suppose that for some $i<j<k$ and $X\subseteq[n]-\{i,j,k\}$, $T$
contains five vertices $Xi,Xk,Xij,Xjk$ and $U\in \{Xj,Xik\}$.
Then~\refeq{s-coll_rt},\refeq{w-coll_combi},\refeq{set-flip} imply the
existence of a tiling $T'$ with the vertex set $V_{T'}$ equal to
$(V_T-\{U\})\cup\{U'\}$, where $\{U,U'\}=\{Xj,Xik\}$. It is known
(cf.~\cite[Prop.~3.2]{DKK2}) that if a tiling has vertices of the form $X'$ and
$X'i'$, then it has the $i'$-edge $(X',X'i')$. Therefore, assuming for
definiteness that $U=Xj$,
  \begin{itemize}
\item
$T$ contains the quadruple $Q$ of edges $(Xi,Xij), (Xk,Xjk), (Xj,Xij),
(Xj,Xjk)$, whereas $T'$ the quadruple $Q'$ of edges  $(Xi,Xij), (Xk,Xjk),
(Xi,Xik), (Xk,Xik)$.
 \end{itemize}
By a natural visualization, one says that the quadruple $Q$ forms an
\emph{M-configuration}, and $Q'$ a \emph{W-configuration}. Accordingly, the
transformation $T\leadsto T'$ is called a \emph{raising}, or \emph{M-to-W},
\emph{flip}, while $T'\leadsto T$  a \emph{lowering}, or \emph{W-to-M},
\emph{flip} on tilings. A simple fact is that $Q$ and $Q'$ are extendable to
larger structures, namely:
  \begin{numitem1} \label{eq:hexagon}
if a rhombus tiling $T$ contains a six-tuple of vertices as
in~\refeq{set-flip}(i), then $T$ contains six edges forming a hexagon on these
vertices, denoted as $H=H(X|ijk)$; furthermore, there is one extra vertex in
the interior of $H$, namely, either $Xj$ or $Xik$; and in the former (latter)
case, $H$ is subdivided into three rhombuses as illustrated in the left (resp.
right) fragment in Fig.~\ref{fig:hexagon}.
  \end{numitem1}

 \begin{figure}[htb]
\vspace{-0.3cm}
\begin{center}
\includegraphics[scale=1]{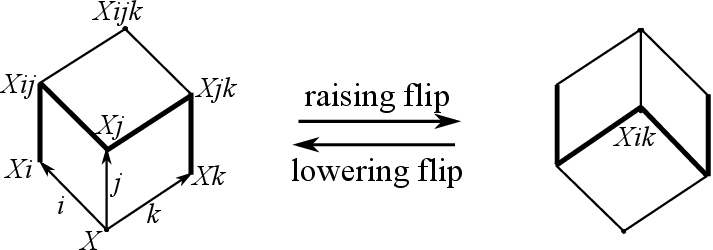}
\end{center}
\vspace{-0.3cm}
 \caption{Flips in rhombus tilings. M- and W-configurations are drawn in bold.}
 \label{fig:hexagon}
  \end{figure}
\vspace{-0cm}

So each M-to-W or  W-to-M flip within a hexagon in a rhombus tiling produces
another rhombus tiling. This gives rise to a poset on the rhombus tilings in
$Z(n,2)$, which is isomorphic to  $\Poss_n$ as above.

In case of a combi $K$, the flips involving $i,j,k,X$ as above, though changing
only one vertex of $K$, can change edges and tiles in a neighborhood of the
corresponding W- or M-configuration in a several possible ways; all of them are
described in~\cite[Sect.~3.3]{DKK2}. One possible way is illustrated in
Fig.~\ref{fig:flip-combi}.

   \begin{figure}[htb]
\vspace{-0cm}
\begin{center}
\includegraphics[scale=1]{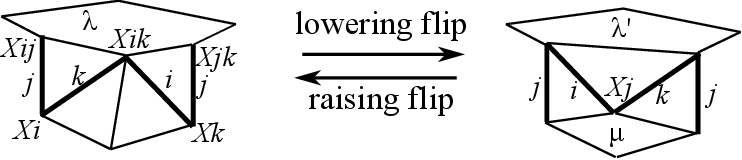}
\end{center}
\vspace{-0.3cm}
\caption{An example of lowering and raising flips in a combi.
Here $\lambda,\lambda', \mu$ are lenses.}
 \label{fig:flip-combi}
  \end{figure}
\vspace{-0cm}

%---------------------------

\medskip
\noindent\textbf{Symmetric tilings and the middle line.} Define $m:=\lfloor
n/2\rfloor$; then $n=2m$ if $n$ is even, and $n=2m+1$ if $n$ is odd. An obvious
property of the $\ast$-symmetry is that
  \begin{numitem1} \label{eq:AAn}
any $A\subseteq [n]$ satisfies $|A|+|A^\ast|=n$.
  \end{numitem1}

Indeed, $\ast$ is the composition of two involutions on $2^{[n]}$, of which one
is defined  by $A\mapsto \bar A:=[n]-A$, and the other by $A\mapsto
A^\circ:=\{i^\circ\colon i\in A\}$. Then~\refeq{AAn} follows from the
equalities $|A|+|\bar A|=n$ and $|A|=|A^\circ|$.
  \smallskip

Next, in order to handle symmetric collections, it is convenient to assume that
\emph{the set $\Xi$ of generating vectors $\xi_i=(x_i,y_i)$ is symmetric}
w.r.t. the vertical line $VL:=\{(x,y)\in\Rset^2\colon x=0\}$ through the
origin, i.e.,
  \begin{equation} \label{eq:sym-gen}
  x_i=-x_{i^\circ} \quad\mbox{and}\quad y_i=y_{i^\circ},\quad i=1,\ldots,n.
  \end{equation}

This implies that the zonogon $Z:=Z(\Xi)$ is self-mirror-reflected w.r.t. $VL$.

The even and odd cases of $n$ differ in essential details and will be
considered separately. In the rest of this section we assume that $n$ is
\emph{even}, $n=2m$. Then
 \begin{numitem1} \label{eq:ML}
the zonogon $Z$ is self-mirror-reflected w.r.t. the horizontal line
 $$
  ML:=\{(x,y)\in\Rset^2\colon y=y_1+\cdots+y_{m}\},
$$
  \end{numitem1}
called the \emph{middle line}; we denote  $y_1+\cdots+y_{m}$ by $y^{ML}$.
From~\refeq{Xast} with $n$ even it follows that for $X\subseteq[n]$ and
$i\in[n]$, if $i,i^\circ\in X$ then $i,i^\circ\notin X^\ast$, and if $i\in
X\not\ni i^\circ$ then $i\in X^\ast\not\ni i^\circ$. This implies the following
nice property of the middle line $ML$:
  \begin{numitem1} \label{eq:mirrorA}
for any $A\subseteq[n]$, the sets $A$ and $A^\ast$ are \emph{mirror-reflected},
or \emph{symmetric}, to each other w.r.t. $ML$, i.e., their corresponding
points $(x_A,y_A)$ and $(x_{A^\ast},y_{A^\ast})$ in $\Rset^2$ satisfy
$x_A=x_{A^\ast}$ and $y_A-y^{ML}=y^{ML}-y_{A^\ast}$; in particular, a set
$A\subset[n]$ (regarded as a point in $\Rset^2$) lies on $ML$ if and only if
$A$ is self-symmetric: $A=A^\ast$.
 \end{numitem1}

\noindent\textbf{Definition.} A rhombus or combined tiling $T$ on $Z$ is
called \emph{symmetric} if the collection (vertex set) $V_T$ is symmetric.
 \medskip

Since a rhombus or combined tiling is determined by its vertex set (in view
of~\refeq{s-coll_rt},\refeq{w-coll_combi}), $T$ itself is symmetric, i.e., for
any tile of $T$, its mirror-reflected tile w.r.t. $ML$ belongs to $T$ as well.

One more useful property of $ML$ shown in~\cite{DKK4} (for $n$ even) is as follows:
  \begin{numitem1} \label{eq:combi-midline}
For a symmetric combi $K$ on $Z$, let $V_{K,ML}=(v_0,v_1,\ldots, v_r)$ be the
sequence of vertices of $K$ lying on $ML$ from left to right. Then $r=m$ and
there is a permutation $\sigma$ on $[m]$ such that for $i=1,\ldots, m$,
$v_{i}-v_{i-1}$ is congruent to the vector $\eps_{\sigma(i)\sigma(i^\circ)}$,
and the line segment $[v_{i-1},v_{i}]$ is either an edge of $K$ or the middle
section of some self-symmetric lens $\lambda$ in $K$ (i.e.,
$v_{i-1}=\ell_\lambda$ and $v_{i}=r_\lambda$). The vertices $v_0$ and $v_m$
represent the intervals $[m]$ and $[(m+1)..n]$, respectively.
  \end{numitem1}

We associate to $K$ the permutation $\sigma=\sigma_K$ on $[m]$ as
in~\refeq{combi-midline} and say that the combi $K$ is \emph{agreeable} with
$\sigma$, and similarly for $V_K$ and for $v_0,\ldots,v_m$.
  \medskip

\noindent\textbf{Remark 1.} ~We can split $K$ into the \emph{lower half-combi}
$\Klow$ and the \emph{upper half-combi} $\Kup$, the ``parts'' of $K$ lying in
the halves $\Zlow$ and $\Zup$ of $Z$ formed by the points $(x,y)$ with $y\le
y^{ML}$ and $y\ge y^{ML}$, respectively. Here if $ML$ cuts the interior of a
lens $\lambda$, then $\Klow$ acquires the \emph{lower semi-lens} $\lambdalow$,
and $\Kup$ the \emph{upper semi-lens} $\lambdaup$ (the parts of $\lambda$ below
and above $[\ell_\lambda,r_\lambda]$, respectively). Then $\Kup$ is symmetric
to $\Klow$.

Due to this, a symmetric combi is determined by its lower half-combi. More
precisely, fix a permutation $\sigma$ on $[m]$ and consider a half-combi $K'$
on $\Zlow$ consisting of $\Delta$- and $\nabla$-triangles, lenses and lower
semi-lenses at level $m$ so that the edges lying on $ML$ be of types $ii^\circ$
and follow in the order $\sigma$. Combining $K'$ and its symmetric half-combi
and removing redundant edges lying on $ML$, we obtain a correct symmetric
combi.
\medskip

Finally, since a rhombus tiling $T$ is in fact a particular case of combies (up
to cutting each rhombus $\lozenge$ into a pair of $\Delta$- and
$\nabla$-tiles), we can consider the \emph{lower} and \emph{upper
half-rhombus-tilings} $\Tlow$ and $\Tup$. Here each tile of $\Tlow$ ($\Tup$) is
either an entire rhombus or a $\nabla$-tile (resp. $\Delta$-tile) having one
edge on $ML$.

%----------------------- Sec.3

\section{Flips in symmetric s-collections}  \label{sec:scoll}

As mentioned above, the cases of $n$ even and odd differ essentially and are
considered separately. This section deals with symmetric strongly separated
collections in $2^{[n]}$ when the number $n$ of colors is even, $n=2m$. The
case of symmetric s-collections with $n$ odd will be considered in
Sect.~\SSEC{sw-n-odd} simultaneously with w-collections.
%\smallskip

Consider a maximal symmetric s-collection $\Sscr$ in $2^{[n]}$ and the
corresponding rhombus tiling $T$ with $V_T=\Sscr$ (cf.~\refeq{s-coll_rt}).
By~\refeq{combi-midline}, there are exactly $m+1$ sets  $A_0,A_1,\ldots,A_m$ in
$\Sscr$ lying on the middle line $ML$ and ordered from left to right; they are
agreeable with a permutation $\sigma$ on $[m]$, namely, for $i=1,\ldots, m$,
~$A_i$ is obtained from $A_{i-1}$ by replacing the element $\sigma(i)$ by
$(\sigma(i))^\circ$.

Let us fix a permutation $\sigma$ and define $\bfSsym_{n}(\sigma)$ to be the
set of collections in $\bfSsym_n$ agreeable with $\sigma$. We first are going
to define appropriate flips within $\bfSsym_{n}(\sigma)$, and then will
introduce flips which connect certain representatives of $\bfSsym_{n}(\sigma)$
and $\bfSsym_{n}(\sigma')$ when permutations $\sigma$ and $\sigma'$ differ by
one transposition.

Flips in $\bfSsym_{n}(\sigma)$ are constructed in a natural way relying
on~\refeq{set-flip}(i). Here, acting in terms of rhombus tilings $T$ with
$V_T\in\bfSsym_{n}(\sigma)$, we choose a hexagon $H$ of $T$ lying in the lower
half $\Tlow$ and simultaneously make a flip within $H$ and its symmetric flip
in the symmetric hexagon $H^\ast$ lying in $\Tup$.

For a vertex $v$ of a tiling $T$, denote the set of (directed) edges entering
$v$ by $\deltain(v)=\deltain_T(v)$. Also let $|v|$ denote the level of $v$
(i.e., the number of elements when $v$ is regarded as a subset of $[n]$). We
rely on the next two lemmas.

\begin{lemma} \label{lm:deltain3}
Let $v$ be a vertex of $T$ such that {\rm(a)} $|\deltain(v)|\ge 3$, and
{\rm(b)} $|v|$ is minimum subject to~{\rm(a)}. Then $v$ is the top vertex of a
hexagon $H(X\,|\,i<j<k)$ having the W-configuration (i.e., formed by the
rhombuses $\lozenge(X| ik)$, $\lozenge(Xi| jk)$, $\lozenge(Xk| ij)$).
\end{lemma}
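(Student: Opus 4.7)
My plan is to reconstruct the hexagon $H(Y\,|\,i<j<k)$ from the fan of rhombuses meeting at $v$ from below, together with the minimality of $|v|$ in hypothesis~(b).

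First I would appeal to the standard fan structure at a vertex of a rhombus tiling on $Z(n,2)$: if the incoming edges at a vertex $w$ have colors $i_1<\cdots<i_p$, then these edges appear around $w$ in the same color order from lower-right to lower-left (because the generators $\xi_i$ are nearly vertical with $x_i$ strictly increasing by \refeq{xi-x-y}), and each consecutive pair of incoming edges $(i_s,i_{s+1})$ bounds a single tile with top $w$, which must be the rhombus $\lozenge(w-\{i_s,i_{s+1}\}\,|\,i_s i_{s+1})$. Applying this to $v$ with $i:=i_1$, $j:=i_2$, $k:=i_3$ (available since $p\ge 3$) and setting $Y:=v-\{i,j,k\}$ so that $v=Yijk$, I obtain two rhombuses below $v$: the $ij$-rhombus $\lozenge(Yk|ij)$ between the $i$- and $j$-incoming edges, and the $jk$-rhombus $\lozenge(Yi|jk)$ between the $j$- and $k$-incoming edges. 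They share the $j$-edge $(Yik,v)$, so each of $Yi,Yk,Yij,Yik,Yjk$ is a vertex of $T$.

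Next I would produce the third rhombus $\lozenge(Y|ik)$ using the minimality. The vertex $Yik$ inherits from the two rhombuses just found the incoming edges $(Yk,Yik)$ of type $i$ and $(Yi,Yik)$ of type $k$, hence $|\deltain(Yik)|\ge 2$. Since $|Yik|=|v|-1$, hypothesis~(b) forces $|\deltain(Yik)|\le 2$, so equality holds. Invoking the fan lemma once more at $Yik$, a vertex with exactly two incoming edges of colors $i<k$ is the top of a unique rhombus below it, which by color matching must be $\lozenge(Y|ik)$; in particular $Y$ is a vertex of $T$. Assembling, the three rhombuses $\lozenge(Y|ik)$, $\lozenge(Yi|jk)$, $\lozenge(Yk|ij)$ fit together into the hexagon $H(Y\,|\,i<j<k)$ in the W-configuration (right-hand fragment of Figure~\ref{fig:hexagon}), with $v=Yijk$ as its top vertex.

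The main (and essentially only) obstacle is the fan lemma recalled in the first step; it is a routine consequence of the strict-concavity assumption~\refeq{additional}(i), and I would either invoke it tacitly from Section~\SEC{prelim} or argue it in one line by tracing the angular positions of the vectors $-\xi_{i_s}$ at $v$.
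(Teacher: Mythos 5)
Your proposal is correct and follows essentially the same argument as the paper: pick three consecutive incoming edges at $v$, recognize the two rhombuses they bound, then use the minimality hypothesis at the shared lower vertex $Yik=v'$ to force exactly two incoming edges there and hence a third rhombus $\lozenge(Y|ik)$ completing the hexagon. The only cosmetic difference is that you invoke the fan structure explicitly as a named auxiliary lemma, whereas the paper uses it tacitly.
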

  \begin{proof}
(The method of proof is, in fact, well-known in the literature.) Let $e,e',e''$
be three consecutive edges (from left to right) in $\deltain(v)$ and let they
have colors $k>j>i$, respectively. Then $v$ is the top vertex of the rhombus of
$T$ containing $e,e'$, say, $\lozenge'=\lozenge(X'|jk)$, and the rhombus
containing $e',e''$, say, $\lozenge''=\lozenge(X''|ij)$. (So $v=X'jk=X''ij$.)
The edges $q'=(X',X'k)$ and $q''=(X'',X''i)$ are different and both enter the
beginning vertex $v'$ of the edge $e'$. By the minimality of $v$, we have
$|\deltain_T(v')|<3$; therefore, the edges $q'$ and $q''$ belong to the same
rhombus $\lozenge=\lozenge(X|ik)$ (where $X=X'-i=X''-k$). Combining
$\lozenge,\lozenge',\lozenge''$, we obtain the desired hexagon with the
W-configuration.
  \end{proof}

It follows that if we apply to a symmetric tiling agreeable with $\sigma$ a
sequence of lowering flips involving hexagons $H$ with the top vertex at level
$\le m$ (and simultaneously make raising flips in the corresponding symmetric
hexagons above the middle line) as long as possible, we eventually obtain a
tiling $T$ with $V_{T}\in\bfSsym_{n}(\sigma)$ such that
 \begin{numitem1} \label{eq:degin2}
any vertex $v$ of $T$ with $|v|\le m$ satisfies $|\deltain_T(v)|\le 2$.
  \end{numitem1}

\noindent\textbf{Definition.} A flip which makes a W-transformation below $ML$
and its  symmetric M-transformation above $ML$, is called a \emph{double
(hexagonal) flip agreeable with $\sigma$}.

 \begin{lemma} \label{lm:unique_2}
For each permutation $\sigma$ on $[n]$, there exists exactly one tiling $T$
with $V_T\in \bfSsym_{n}(\sigma)$ for which~\refeq{degin2} is valid.
 \end{lemma}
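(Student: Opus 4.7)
The plan is as follows.

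\textbf{Existence.} I would start from any $T_0\in\bfSsym_{n}(\sigma)$ and iteratively apply the double hexagonal flips supplied by Lemma~\ref{lm:deltain3}: while~\refeq{degin2} fails, pick the vertex $v$ of minimum level $\le m$ with $|\deltain_T(v)|\ge 3$ and perform the lowering flip at the W-hexagon $H$ given by the lemma, together with the simultaneous raising flip at the mirror hexagon $H^\ast$. Each such step preserves membership in $\bfSsym_{n}(\sigma)$ (the path on $ML$ is untouched) and strictly decreases the nonnegative potential
$$
\Phi(T) := \sum_{B \in V_T,\; |B| \le m} |B|,
$$
so the iteration terminates at a tiling satisfying~\refeq{degin2}.

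\textbf{Uniqueness.} By $\ast$-symmetry, $T$ is determined by $\Tlow$, so it suffices to pin down the latter. Agreement with $\sigma$ prescribes both the middle-line vertices $A_0,\ldots,A_m$ and the $m$ straddling rhombuses $\lozenge(B_k\,|\,\sigma(k),\sigma(k)^\circ)$, where $B_k:=A_{k-1}-\sigma(k)$. Detaching their $\nabla$-halves from the top of $\Tlow$ leaves a rhombus tiling of the \emph{fixed} sub-polygon $P\subset\Zlow$ whose upper boundary is the zigzag $A_0 B_1 A_1 B_2\cdots B_m A_m$. Applying Lemma~\ref{lm:deltain3} inside $P$ converts~\refeq{degin2} into the requirement that this tiling of $P$ contains no W-hexagon, so uniqueness of $T$ reduces to showing that a W-hexagon-free rhombus tiling of $P$ is unique.

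This last claim is the direct analog for the sub-polygon $P$ of the unique-minimum property~\refeq{Gammasw} of the poset $\Poss_n$. The plan is to prove it by the standard Leclerc--Zelevinsky argument via pseudo-line arrangements~\cite{LZ}: the $n$ pseudo-lines enter $P$ from below in the natural order $x_1<\cdots<x_n$ and exit through the zigzag top in the order prescribed by $\sigma$; the W-free arrangement is forced to be the ``pulled-taut'' one, in which each pair of lines $(\ell_i,\ell_j)$ crosses at most once---precisely when their entry and exit orders disagree---and this arrangement is manifestly unique. The main obstacle will be to verify that this argument, originally formulated for the full zonogon $Z(n,2)$, transfers cleanly to the sub-polygon $P$ with its non-straight zigzag upper boundary; the verification relies only on the convexity of $P$ (guaranteed by the strict-concavity hypothesis~\refeq{additional}(i)), so the original argument adapts with only cosmetic changes.
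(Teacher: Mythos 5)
Your plan for the termination/existence half mirrors the paper's (iterating the double flips of Lemma~\ref{lm:deltain3} with a monotone potential), but it starts from ``any $T_0\in\bfSsym_n(\sigma)$'' and therefore silently assumes $\bfSsym_n(\sigma)\neq\emptyset$. That is precisely the nontrivial part: the paper devotes the first half of its proof to building such a $T_0$, by checking that the middle-line sequence $A_0,\ldots,A_m$ together with the consecutive unions and intersections is strongly separated, extending it to a rhombus tiling by purity, and then mirror-symmetrizing the lower half. Without some argument of that kind your existence proof has a genuine gap.

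For uniqueness you take a different route from the paper. The paper argues top-down: under~\refeq{degin2} the level-$i$ vertex set determines the level-$(i-1)$ vertex set, so starting from the fixed sequence $A_0,\ldots,A_m$ on $ML$ the whole lower half-tiling is forced; this is short, elementary, and uses nothing beyond the indegree bound. You instead detach the $\nabla$-halves of the straddling rhombuses, reduce to a tiling of the residual region $P$, and propose to rerun the Leclerc--Zelevinsky pseudo-line argument there. The reduction itself is sound (and the equivalence between~\refeq{degin2} and ``no W-hexagon'' does follow from Lemma~\ref{lm:deltain3}), but the execution as written has a flaw: you assert $P$ is convex and invoke the strict-concavity hypothesis to justify that. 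It is not. The $A_k$ all lie on the straight line $ML$ while the $B_k$ lie strictly below it, so the zigzag $A_0B_1A_1\cdots B_mA_m$ is a sawtooth and $P$ has reflex angles at $A_1,\ldots,A_{m-1}$. The pseudo-line/wiring argument may well transfer to such a region, but not via the convexity appeal you make; you would need to argue directly that the prescribed entry order (along the bottom of $Z$) and exit order (through the zigzag, dictated by $\sigma$) force each pair of pseudo-lines to cross at most once, with no hidden interaction at the reflex corners. As it stands this part is a plan, not a proof, and the paper's top-down determinacy argument is both simpler and already complete.
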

 \begin{proof}
First of all we check that $\bfSsym_{n}(\sigma)$ is nonempty. To see this, let
$A_0,A_1,\ldots,A_m$  be the sequence of points (subsets of $[n])$ on $ML$
agreeable with $\sigma$. Then $A_0=[m]$, and for each $i=1,\ldots,m$,
   $$
   A_i=([m]-\{\sigma(1),\ldots,\sigma(i)\})\cup\{(\sigma(1))^\circ,\ldots,(\sigma(i))^\circ\}.
   $$

This implies $(A_i-A_j)<(A_j-A_i)$ for any $i<j$ (in view of $a<b^\circ$ for
any $a,b\in[m]$). Therefore, the collection $\Ascr:=\{A_0,\ldots,A_m\}$ is
strongly separated. Moreover, one can check that the collection $\Ascr'$
obtained by adding to $\Ascr$ the sets $A_{i-1}\cup A_i$ and $A_{i-1}\cap A_i$
for all $i\in[m]$ is strongly separated as well.

Let $\lozenge_i$ be the $\sigma(i)\sigma(i^\circ)$-rhombus with the left vertex
$A_{i-1}$ and right vertex $A_i$, $i=1,\ldots,m$. The union of these rhombuses
has the vertex set just $\Ascr'$. Since $\Ascr'$ is strongly separated, there
exists a rhombus tiling $T$ with $V_T\supset\Ascr'$. This $T$ must contain the
rhombuses $\lozenge_1,\ldots,\lozenge_m$. Replacing in $T$ the half-tiling
$\Tup$ by the one symmetric to $\Tlow$, we obtain a symmetric tiling on
$Z(n,2)$ agreeable with $\sigma$, as required.

Since $\bfSsym_{n}(\sigma)$ is nonempty, it contains a tiling $T$
obeying~\refeq{degin2}. The uniqueness of such a $T$ follows from the
observation that for each $i=m,m-1,\ldots,1$, the set $V^i$ of vertices $v$ at
level $i$ in $T$ determines the next set $V^{i-1}$. (Indeed, if
$\lozenge,\lozenge'$ are two consecutive rhombuses in which the right vertex
$v$ of the former coincides with the left vertex of the latter and lies at
level $i$, then~\refeq{degin2} implies that the bottoms of
$\lozenge,\lozenge'$ form two consecutive vertices at level $i-1$.)
 \end{proof}

Denoting the tiling as in this lemma by $\Tmin_{n}(\sigma)$, we obtain the following

 \begin{corollary} \label{cor:Gsym_nsigma}
Let $\Gammasym_{n}(\sigma)$ be the directed graph with the vertex set
$\bfSsym_{n}(\sigma)$ whose edges are the pairs $(T,T')$ where the tiling $T$
is obtained by a double hexagonal flip from $T'$ which is lowering below $ML$.
Then $\Gammasym_{n}(\sigma)$ is connected and $\Tmin_{n}(\sigma)$ is its unique
minimal (zero-indegree) vertex.
  \end{corollary}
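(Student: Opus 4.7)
The plan is to use a monovariant argument, with the vertex-levels in $\Tlow$ as the monovariant, and then extract both conclusions from the two preceding lemmas treated as a black box.

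I would define the potential
\[
\Phi(T) := \sum_{v\in V_T,\;|v|\le m} |v|, \qquad T \in \bfSsym_{n}(\sigma).
\]
A lowering double flip below $ML$ acts on a hexagon $H(X\,|\,i<j<k)\subseteq\Tlow$ by replacing its interior vertex $Xik$ by $Xj$ (and performs the symmetric change above $ML$); both $Xik$ and $Xj$ sit at level $\le m$ and differ by exactly one, so $\Phi$ drops by $1$ per lowering flip. Since $\Phi$ is a non-negative integer, any iteration of lowering flips issuing from a given $T$ terminates.

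Next I would show that a tiling $T\in\bfSsym_{n}(\sigma)$ admits no lowering flip below $ML$ if and only if~\refeq{degin2} holds, which by Lemma~\ref{lm:unique_2} forces $T=\Tmin_{n}(\sigma)$. The nontrivial implication invokes Lemma~\ref{lm:deltain3}: if some vertex $v$ of $T$ with $|v|\le m$ has $|\deltain_T(v)|\ge 3$, choose such a $v$ of minimum level. The lemma then presents $v$ as the top of a W-configuration hexagon $H$; all other vertices of $H$ have level strictly less than $|v|\le m$, so $H\subseteq\Tlow$, and by symmetry of $T$ the mirror hexagon $H^\ast$ is an M-configuration in $\Tup$ sharing with $H$ at most the vertex $v$ (in the boundary case when $v$ lies on $ML$). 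The simultaneous W-to-M flip on $H$ together with the symmetric M-to-W flip on $H^\ast$ is then a well-defined lowering double flip agreeable with $\sigma$. In the opposite direction, any lowering flip below $ML$ acts at a W-configuration hexagon whose top vertex has three incoming edges and level $\le m$, which immediately violates~\refeq{degin2}.

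Both conclusions now drop out. A vertex of $\Gammasym_{n}(\sigma)$ has in-degree zero precisely when no lowering flip applies to it, i.e.\ (by the preceding step) precisely when it equals $\Tmin_{n}(\sigma)$; and iterating lowering flips from any starting $T\in\bfSsym_{n}(\sigma)$ produces a finite sequence $T=T_0,T_1,\ldots,T_k=\Tmin_{n}(\sigma)$, whose edges $(T_{s+1},T_s)$ assemble into a directed path from $\Tmin_{n}(\sigma)$ to $T$ in $\Gammasym_{n}(\sigma)$, giving connectedness. The only delicate point I foresee is the geometric bookkeeping in the second step, namely verifying that the W-configuration hexagon furnished by Lemma~\ref{lm:deltain3} indeed lies inside $\Tlow$ and that its reflection yields a legitimate symmetric M-configuration in $\Tup$; handling the boundary case when the top vertex sits on the middle line (so that $H$ and $H^\ast$ share that vertex) is the one situation that deserves a specific check.
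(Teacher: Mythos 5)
Your proposal is correct and follows essentially the same route the paper takes implicitly: iterate lowering double flips until blocked, observe that the terminal tiling must satisfy \refeq{degin2} (via Lemma~\ref{lm:deltain3}), and then invoke Lemma~\ref{lm:unique_2} to identify it with $\Tmin_{n}(\sigma)$. You add two useful pieces of explicitness that the paper leaves tacit — the potential $\Phi$ certifying termination, and the check that when the top vertex of the W-hexagon lies on $ML$ the hexagons $H$ and $H^\ast$ overlap only in that vertex so the double flip is still well defined and fixes the vertices on $ML$ (hence preserves $\sigma$) — but these are confirmations of the same argument rather than a different one.
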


Hence $\Gammasym_{n}(\sigma)$ determines a poset, denoted as
$\Possym_{n}(\sigma)$, in which $\Tmin_{n}(\sigma)$ is the unique minimal
element. Figure~\ref{fig:n=6} illustrates all possible lower half-tilings for
$n=6$ and $\sigma=123,\,132,\,312$. Here the ones appeared from minimal tilings
$\Tmin_{n}(\sigma)$ are labeled $A$ (for $\sigma=123$), $B$ (for $\sigma=132$),
and $C$ (for $\sigma=312$).

 \begin{figure}[h]
\vspace{0cm}
\begin{center}
\includegraphics[scale=0.9]{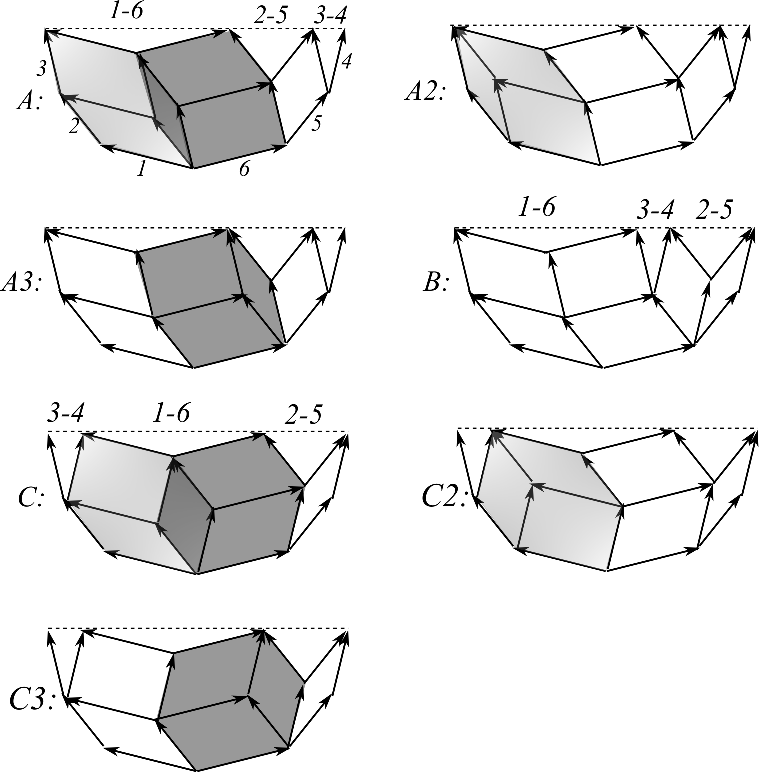}
\end{center}
\vspace{-0.2cm} \caption{Case $n=6$. The configurations (lower half-tilings)
for $\sigma=123$ (namely, A,A2,A3), $\sigma=132$  (namely, B), and $\sigma=312$
(namely, C,C2,C3).}
 \label{fig:n=6}
  \end{figure}
%\vspace{-0cm}

The graphs $\Gammasym_{n}(\sigma)$ for different $\sigma$'s are disjoint, and
in order to connect them we need to introduce one more sort of flips. Consider
two permutations $\sigma,\sigma'$ on $[m]$ that differ by one transposition:
there is $i$ such that $\sigma(i)=\sigma'(i+1)$, $\sigma(i+1)=\sigma'(i)$, and
$\sigma(j)=\sigma'(j)$ for $j\ne i,i+1$. Let for definiteness
$\sigma(i)<\sigma(i+1)$.

Let $A_0,A_1,\ldots,A_m$ be, as before, the corresponding sequence of vertices
(subsets of $[n]$) in the middle line for tilings agreeable with $\sigma$, and
$A'_0,A'_1,\ldots, A'_m$ a similar sequence for $\sigma'$. Then $A_i\ne A'_i$
and $A_j=A'_j$ for all $j\ne i$. For brevity we further write $\alpha$ for
$\sigma(i)$, and $\beta$ for $\sigma(i+1)$.

Form the auxiliary ``zonogon'' $\Omega$ with the origin at $A_{i-1}\cap
A_{i+1}$ and the generating vectors $\xi_\alpha,
\xi_\beta,\xi_{\beta^\circ},\xi_{\alpha^\circ}$. This $\Omega$ is
self-symmetric w.r.t. $ML$ and contains $A_{i-1},A_{i+1}$ on the boundary and
$A_i,A'_i$ in the interior. There are two self-symmetric tilings
$\Pi_\sigma,\,\Pi_{\sigma'}$ on $\Omega$ formed by six rhombuses (of types
$\alpha\alpha^\circ,\, \beta\beta^\circ,\, \alpha\beta,\, \alpha\beta^\circ,\,
\beta\alpha^\circ,\, \beta^\circ\alpha^\circ$), where $\Pi_\sigma$ contains the
vertex $A_i$, and $\Pi_{\sigma'}$ the vertex $A'_i$; they are illustrated in
Fig.~\ref{fig:n=4}.

\begin{figure}[h]
\vspace{-0cm}
\begin{center}
\includegraphics[scale=0.8]{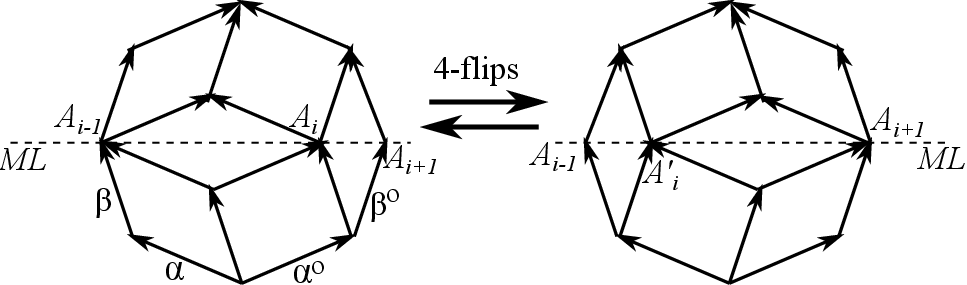}
\end{center}
\vspace{-0.3cm}
 \caption{big flips swapping fragments $\Pi_\sigma$ and $\Pi_{\sigma'}$}
 \label{fig:n=4}
  \end{figure}
\vspace{0cm}

The collection consisting of rhombuses $\lozenge_1,\ldots,\lozenge_{i-1},
\lozenge_{i+2},\ldots,\lozenge_m$ as in the proof of Lemma~\ref{lm:unique_2}
plus the ones in $\Pi_\sigma$ can be extended to a tiling $T$ in
$\bfSsym_{n}(\sigma)$ (this is shown using the fact that the lower boundary of
this collection forms a path from $[m]$ to $[(m+1)..n]$ consisting of $n$ edges
of different types). Replacing in $T$ the fragment $\Pi_\sigma$ by
$\Pi_{\sigma'}$, we obtain a tiling $T'$ in $\bfSsym_{n}(\sigma')$, and vice
versa.
 \medskip

\noindent\textbf{Definition.} The transformation $T\leadsto T'$ or $T'\leadsto
T$ as above is called a \emph{big (\emph{or} barrel) flip} turning the shape
$\Pi_\sigma$ into $\Pi_{\sigma'}$, or conversely.
  \medskip

\noindent(A transformation of this sort appeared in~\cite[Sect.~6]{eln} under the name of an \emph{octagon-flip}.) Now combine the above directed graphs $\Gammasym_{n}(\sigma)$ (over all
permutations $\sigma$) into one mixed graph $\Gammasym_n$ by adding undirected
edges to connect each pair of tilings of which one is obtained by a big flip
from the other. Figure~\ref{fig:flgraph-n=6} illustrates the graph
$\Gammasym_6$. Here $\bfSsym_6$ consists of 14 symmetric tillings, seven of
them are illustrated in Fig.~\ref{fig:n=6}, and the other ones (labeled with
primes) are their dual (agreeable with the permutations $321,\, 231,\, 213$).

\begin{figure}[b]
\vspace{0cm}
\begin{center}
\includegraphics[scale=0.8]{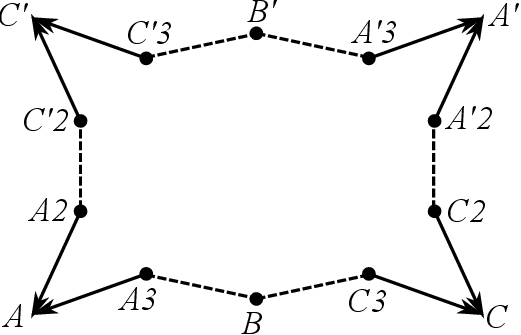}
\end{center}
\vspace{-0.3cm}
 \caption{The flip graph for $n=6$.}
 \label{fig:flgraph-n=6}
  \end{figure}
\vspace{-0cm}

Summing up the above constructions and reasonings, we obtain the following result; in terms of flips in symmetric tilings it was shown in~\cite[Sect.~6]{eln} (based on a general fact on reduced words for finite Coxeter groups). 

 \begin{theorem} \label{tm:all-s-flips}
Any two maximal symmetric s-collections in $2^{[n]}$ with $n$ even can be
linked by a series of flips of two sorts: double hexagonal flips (acting within
the same ``block'' $\bfSsym_{n}(\sigma)$) and big flips (linking collections
agreeable with neighboring $\sigma,\sigma'$). Therefore, $\Gammasym_n$ is
connected.
  \end{theorem}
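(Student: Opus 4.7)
The plan is to assemble the theorem from two ingredients already in place: within-block connectivity via double hexagonal flips (Corollary~\ref{cor:Gsym_nsigma}), and between-block connectivity via big flips when the permutations $\sigma,\sigma'$ differ by a single adjacent transposition. Because the symmetric group on $[m]$ is generated by adjacent transpositions, this will suffice to link any two members of $\bfSsym_n$.

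First I would take arbitrary $T_1,T_2\in\bfSsym_n$, with $T_1$ agreeable with some $\sigma$ and $T_2$ with some $\sigma'$, and choose a chain $\sigma=\sigma_0,\sigma_1,\ldots,\sigma_k=\sigma'$ in which consecutive pairs are related by an adjacent transposition. I would then build a sequence $T_1=R_0,R_0',R_1,R_1',\ldots,R_k',R_k=T_2$ with each $R_j,R_j'\in\bfSsym_n(\sigma_j)$, where $R_j$ is connected to $R_j'$ within the block by double hexagonal flips, and $R_j'$ is connected to $R_{j+1}$ by one big flip swapping $\Pi_{\sigma_j}$ for $\Pi_{\sigma_{j+1}}$.

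The main step is to choose, for each $j$, an anchor tiling $R_j'\in\bfSsym_n(\sigma_j)$ that actually contains the fragment $\Pi_{\sigma_j}$ required by the big flip, and likewise $R_{j+1}$ containing $\Pi_{\sigma_{j+1}}$. Their existence is the content of the construction preceding the theorem: the rhombuses $\lozenge_1,\ldots,\lozenge_{i-1},\lozenge_{i+2},\ldots,\lozenge_m$ from Lemma~\ref{lm:unique_2} together with the six rhombuses of $\Pi_{\sigma_j}$ form a strongly separated collection whose lower boundary runs from $[m]$ to $[(m+1)..n]$ through $n$ edges of distinct colors, and so it extends to some tiling in $\bfSsym_n(\sigma_j)$. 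Corollary~\ref{cor:Gsym_nsigma} then guarantees that $R_j$ and the chosen $R_j'$, lying in the same (connected) block $\Gammasym_n(\sigma_j)$, are linked by a sequence of double hexagonal flips routed, if need be, through $\Tmin_n(\sigma_j)$.

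The main obstacle, though modest in nature, is verifying that each big flip $R_j'\leadsto R_{j+1}$ really does produce a valid symmetric maximal s-collection: the six rhombuses of the inserted $\Pi_{\sigma_{j+1}}$ must be strongly separated from each other and from every tile of $R_j'$ lying outside $\Omega$, and the result must remain symmetric w.r.t.\ $ML$. Strong separation outside $\Omega$ is automatic because $\Pi_{\sigma_j}$ and $\Pi_{\sigma_{j+1}}$ share the boundary $\partial\Omega$ and the sets on it; internal strong separation follows from the explicit description of both fragments, which involve only the four colors $\alpha,\beta,\beta^\circ,\alpha^\circ$; and symmetry is preserved since $\Omega$ is itself self-symmetric w.r.t.\ $ML$ and both $\Pi_{\sigma_j},\Pi_{\sigma_{j+1}}$ are invariant under that reflection. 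Concatenating the within-block chains and the between-block big flips then yields the desired flip-path from $T_1$ to $T_2$, proving connectedness of $\Gammasym_n$.
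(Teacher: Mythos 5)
Your proposal is correct and matches the paper's (implicit) proof: the theorem is obtained by combining within-block connectivity from Corollary~\ref{cor:Gsym_nsigma}, the big-flip construction linking neighboring blocks, and the fact that adjacent transpositions generate the symmetric group on $[m]$. Your additional verification that a big flip produces a valid symmetric tiling is a welcome expansion of what the paper leaves to the reader (the constructed fragments $\Pi_\sigma,\Pi_{\sigma'}$ agree on $\partial\Omega$ and are each self-symmetric, so replacing one by the other inside a tiling yields a tiling and hence a strongly separated symmetric collection by~\refeq{s-coll_rt}); only the indexing of your chain of anchor tilings is slightly garbled, but the intended argument is clear.
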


%----------------------- Sec. 4

\section{Flips in symmetric w-collections}  \label{sec:wcoll}

The first part of this section considers maximal symmetric weakly separated
collections in $2^{[n]}$ when $n$ is even, while the second one deals with the
case of $n$ odd.

%----------------------

\subsection{Maximal symmetric w-collections in $2^{[n]}$ with $n$ even.} \label{ssec:w-n-even}

Consider such a collection $\Wscr$ and the symmetric combi $K$ with $V_K=\Wscr$
on the zonogon $Z=Z(n,2)$ (existing by reasonings in Remark~1). Since any combi
without lenses is equivalent to a rhombus tiling, in which case symmetric flips
are described in the previous section, we may assume that the set of lenses in
$K$ is nonempty.

The symmetry of $K$ w.r.t. the middle line $ML$ of $Z$ (subject to~\refeq{ML})
implies that $K$ has at least one lens $\lambda$ whose lower boundary
$L_\lambda$ is contained in the lower half-combi $\Klow$. (In this case, either
$\lambda$ lies entirely in $\Klow$, or $\lambda$ is self-symmetric and its
vertices $\ell_\lambda$ and $r_\lambda$ lie on $ML$.)  It is known that the
directed graph whose vertices are the lenses of a combi and whose edges are the
pairs $(\lambda,\lambda')$ such that the upper boundary $U_\lambda$ and the
lower boundary $L_{\lambda'}$ share an edge is acyclic
(see~\cite[Sec.~3]{DKK2}). This implies that
  \begin{numitem1} \label{eq:min-lens}
for an arbitrary combi $K'$ on $Z(n',2)$ and $h\in[n']$, if $K'$ has at least
one lens of level $h$, then there is a lens $\lambda$ of this level such that
each edge in $L_\lambda$ is shared with a $\nabla$-tile, not with another lens
(where the level of a lens is the size $|A|$ of any of its vertices (regarded
as subsets of $[n']$)).
  \end{numitem1}

Returning to a symmetric combi $K$  on $Z=Z(n,2)$, take a lens $\lambda$ of
level $\le m$ in $K$ satisfying~\refeq{min-lens}  (where $n=2m$). Choose two
consecutive edges $e=(A,B)$ and $e'=(B,C)$ in $L_\lambda$, and let
$\nabla=\nabla(D|AB)$ and $\nabla'=\nabla(D'|BC)$ be the $\nabla$-tiles of $K$
containing these edges. By the definition of a lens, there are colors $k>j>i$
such that $Ak=Bj=Ci$ (which is the upper root of $\lambda$). Then the edge $e$
has type $jk$, and $e'$ type $ij$. It follows that the edges
$(D,A),\,(D,B),\,(D',B),\,(D',C)$ have colors $j,k,i,j$, respectively, and
therefore they form a W-configuration in $K$.

This leads to the corresponding symmetric flip in $K$ and $\Wscr$. In terms of
$\Wscr$, denoting $D\cap D'$ by $X$, one can see that $D=Xi$, $D'=Xk$, $A=Xij$,
$B=Xik$, $C=Xjk$. Then the \emph{symmetric weak flip} determined by
$\lambda,e,e'$ consists of the usual lowering (weak) flip $Xik\leadsto Xj$ (in
the ``presence of   four witnesses'' $Xi,\,Xk,\,Xij,\,Xjk$) and simultaneously
of the raising (weak) flip $(Xik)^\ast\leadsto (Xj)^\ast$. This combined flip
transforms $\Wscr$ into a symmetric collection $\Wscr'$.

In terms of $K$, the raising flip handles the lens $\lambda^\ast$ symmetric to
$\lambda$, the edges $e^\ast$ and $e'^\ast$ in $U_{\lambda^\ast}$, and the
$\Delta$-tiles $\Delta=\Delta(D^\ast|A^\ast B^\ast)$ and
$\Delta'=\Delta(D'^\ast|B^\ast C^\ast)$ whose edges $(A^\ast,D^\ast),
(B^\ast,D^\ast), (B^\ast,D'^\ast), (C^\ast,D'^\ast)$ form an M-configuration.
The lens $\lambda^\ast$ coincides with $\lambda$ if the level $h$ of $\lambda$
equals $m$, while $\lambda^\ast$ lies in $\Kup$ if $h<m$. As is seen from the
description of flips in~\cite[Sec.~3]{DKK2}, the former flip makes a local
transformation within $\Klow$ (a particular case is illustrated in
Fig.~\ref{fig:flip-combi}), while the latter flip does so within $\Kup$. These
flips act independently and result in a correct symmetric combi $K'$ with
$V_{K'}=\Wscr'$. Hence $\Wscr'$ is weakly separated.

The above double flip preserves the vertex structure on $ML$ and decreases by 1
the total size of vertices of the current combi that are contained in $\Zlow$.
If the new combi $K'$ still has a lens, one can repeat the procedure, and so
on. The process terminates when all lenses in the current combi vanish. Then we
can conclude with the following

 \begin{prop} \label{pr:wflips}
Let $\Wscr$ be a maximal w-collection agreeable with a permutation $\sigma$ on
$[m]$. Then one can apply to $\Wscr$ a series of symmetric double (weak) flips
as described above, preserving $\sigma$ during the process, so as to eventually
obtain an s-collection in $\bfSsym_{n}(\sigma)$.
  \end{prop}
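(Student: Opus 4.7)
The plan is to iterate the symmetric double weak flip constructed in the paragraphs preceding the proposition and verify, via a monovariant, that the process terminates with a lens-free symmetric combi, which by~\refeq{s-coll_rt} corresponds to a maximal symmetric s-collection agreeable with $\sigma$, i.e.\ to an element of $\bfSsym_{n}(\sigma)$.

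For the monovariant I would use
\[
\Phi(K) \;:=\; \sum_{v \in V_K \cap (\Zlow \setminus ML)} |v|,
\]
the sum of sizes of vertices of $K$ strictly below the middle line. Each symmetric double weak flip removes a vertex $B=Xik$ from $L_\lambda$ and replaces it by $Xj$ of size one less, so the total size contribution in $\Klow$ drops by exactly one, while the mirror flip in $\Kup$ acts only on $\Zup$ and leaves $\Phi$ untouched. Both $B$ and $Xj$ lie strictly below $ML$: by \refeq{combi-midline} the only vertices of the combi on $ML$ are the consecutive $v_0,\ldots,v_m$, and an intermediate vertex of $L_\lambda$ sandwiched between $v_{i-1}=\ell_\lambda$ and $v_i=r_\lambda$ cannot itself be a $v_k$. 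Thus $\Phi$ strictly decreases at each step, and being a non-negative integer, the process terminates. Because each double flip is $\ast$-symmetric by construction and leaves the sequence $v_0,\ldots,v_m$ intact, symmetry and agreement with $\sigma$ are preserved throughout.

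Upon termination the final combi $K$ admits no symmetric double weak flip. The existence argument given before the proposition---symmetry of $K$ to produce a lens with $L_\lambda\subset\Klow$ of level $\le m$, combined with~\refeq{min-lens} to refine it to one whose $L_\lambda$-edges are all shared with $\nabla$-tiles---would produce such a flip if $K$ still had any lens of level $\le m$. Hence $K$ has no lens of level $\le m$, and by $\ast$-symmetry (a lens of level $h>m$ would have mirror $\lambda^\ast$ of level $n-h<m$) no lens of level $>m$ either. So $K$ is lens-free, equivalent to a rhombus tiling, and $V_K\in\bfSsym_{n}(\sigma)$ as required. The step I expect to be most delicate is the geometric bookkeeping verifying that intermediate vertices of $L_\lambda$ sit strictly below $ML$ for self-symmetric lenses of level exactly $m$; once this is in place, the rest is a routine monovariant-plus-existence argument.
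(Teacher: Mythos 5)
Your argument is correct and follows essentially the same route as the paper: the paper's own proof (the paragraph preceding the proposition, ending ``Then we can conclude with the following'') uses the monovariant $\sum_{v\in V_K\cap\Zlow}|v|$, which decreases by 1 per symmetric double flip, and observes that termination forces all lenses to vanish. Your $\Phi$ differs only by discarding the (invariant) $ML$ vertices, and your added check that the flipped vertices lie strictly below $ML$, including for a self-symmetric lens of level $m$, is a correct elaboration of a detail the paper leaves implicit.
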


Combining this with Theorem~\ref{tm:all-s-flips}, we come to the following

 \begin{corollary} \label{cor:all-w-flips}
Any two maximal symmetric w-collections in $2^{[n]}$ with $n$ even can be
linked by a series of flips of two sorts: double hexagonal or weak flips
(keeping permutations on $[m]$) and big flips (linking s-collections agreeable
with neighboring $\sigma,\sigma'$).
  \end{corollary}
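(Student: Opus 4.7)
The plan is to reduce the weak case to the strong case and then invoke Theorem~\ref{tm:all-s-flips}. Since Proposition~\ref{pr:wflips} already provides the crucial reduction step, the argument is mostly a concatenation.

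First I would take two arbitrary maximal symmetric w-collections $\Wscr_1,\Wscr_2$ in $2^{[n]}$. Each $\Wscr_i$ corresponds, via~\refeq{w-coll_combi}, to a symmetric combi $K_i$ on $Z=Z(n,2)$, and by property~\refeq{combi-midline} each such combi determines a well-defined permutation $\sigma_i$ on $[m]$ describing the sequence of vertices on the middle line $ML$. Apply Proposition~\ref{pr:wflips} to $\Wscr_1$: this produces a sequence of symmetric double weak flips that keeps the permutation $\sigma_1$ fixed and eliminates lenses one by one, ultimately yielding a maximal symmetric s-collection $\Sscr_1\in\bfSsym_n(\sigma_1)$ (equivalently, a symmetric rhombus tiling). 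Do the same with $\Wscr_2$, obtaining $\Sscr_2\in\bfSsym_n(\sigma_2)$.

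Next I would invoke Theorem~\ref{tm:all-s-flips} to connect $\Sscr_1$ and $\Sscr_2$ by a series of double hexagonal flips (within blocks $\bfSsym_n(\sigma)$) together with big flips (moving between neighboring permutations). This furnishes a path of the two allowed flip types in the s-collection world linking $\Sscr_1$ to $\Sscr_2$.

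Finally, I would concatenate the three pieces. The subtle point — and the only place needing comment — is that the symmetric double weak flips produced by Proposition~\ref{pr:wflips} are lowering, whereas we need to end at $\Wscr_2$, not start from it. This is handled by reversibility: each symmetric double weak flip is built from a pair of ordinary weak flips (one lowering below $ML$ and its mirror raising above $ML$, or a self-symmetric one on $ML$), and each ordinary weak flip of~\refeq{set-flip}(ii) has an inverse of the opposite direction, which is again a weak flip with the same four witnesses. Hence the chain $\Wscr_2\to\Sscr_2$ can be reversed to a chain $\Sscr_2\to\Wscr_2$ of symmetric double weak flips (now raising). Putting everything together produces the required sequence
\[
\Wscr_1\;\longrightarrow\;\Sscr_1\;\longrightarrow\;\Sscr_2\;\longrightarrow\;\Wscr_2,
\]
in which every step is either a double hexagonal flip, a symmetric double weak flip, or a big flip. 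There is no serious obstacle beyond checking this reversibility, since the two heavy ingredients (Proposition~\ref{pr:wflips} and Theorem~\ref{tm:all-s-flips}) are already established; the proof of the corollary is essentially bookkeeping of the three concatenated chains.
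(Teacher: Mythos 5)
Your proof is correct and follows essentially the same route as the paper: the paper's argument is precisely the concatenation of Proposition~\ref{pr:wflips} (to descend from each symmetric w-collection to an s-collection agreeable with its permutation, preserving $\sigma$) with Theorem~\ref{tm:all-s-flips} (to connect the two resulting s-collections). The reversibility of symmetric double weak flips that you single out is indeed the only point the paper leaves implicit, and your handling of it is sound.
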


%------------------

\subsection{When the number $n$ of colors is odd.} \label{ssec:sw-n-odd}

Let $n$ be odd, $n=2m+1$. We first consider a \emph{maximal symmetric}
w-collection $\Cscr$ in $2^{[n]}$ and introduce symmetric flips for it, whereas
flips for s-collections will be obtained as a by-product in the end of this
section. It should be noted that the size of such a $\Cscr$ is strictly less
than that of a maximal \emph{non-symmetric} collection in $2^{[n]}$, i.e.,
$|\Cscr|<s_{n,1}$ (where $s_{n,r}$ is defined in Sect.~\SEC{intr});
see~\cite{DKK4}. We rely on the description of flips in the even case, with
$2m$ colors, in Sect.~\SEC{scoll}, and on a relationship between even and odd
cases established in~\cite[Sec.~4]{DKK4}.

Consider a maximal symmetric w-collection $\Cscr\in\bfWsym_n$. The color $m+1$
is self-symmetric: $(m+1)^\circ=m+1$, and from definition~\refeq{Xast} it
follows that for any $X\subseteq[n]$, exactly one of $X,X^\ast$ contains the
element $m+1$. In particular, $\Cscr$ has no self-symmetric sets. Partition
$\Cscr$ as $\Cscr'\sqcup \Cscr''$, where
  \begin{equation} \label{eq:CpCpp}
  \Cscr':=\{X\in\Cscr\colon m+1\notin X\}\quad\mbox{and}\quad
        \Cscr'':=\{X\in\Cscr\colon m+1\in X\}.
        \end{equation}

Using the fact that $\Cscr$ is weakly separated, one can show that
  \begin{equation} \label{eq:mm+1}
  |X|\le m\;\; \mbox{for all}\;\; X\in\Cscr'\quad \mbox{and}\quad |X|\ge m+1\;\; \mbox{for all}\;\; X\in\Cscr''
  \end{equation}
(cf.~\cite[Exp.~(4.1)]{DKK4}). Define
  \begin{equation} \label{eq:WpWpp}
  \Wscr':=\Cscr', \quad \Wscr'':=\{X-\{m+1\}\colon X\in\Cscr''\},
  \quad\mbox{and} \quad \Wscr:=\Wscr'\cup\Wscr''.
   \end{equation}

In what follows we denote the $2m$-element set $[n]-\{m+1\}$ as $[n]^-$,
keeping the definition of complementary colors: for $i\in[n]^-$,
$i^\circ=n+1-i$, and keeping the definition of set symmetry~\refeq{Xast}. In
view of~\refeq{mm+1}, the symmetric collections in $2^{[n]^-}$ are, in fact,
equivalent to those in $2^{[2m]}$, and instead of the set $\bfWsym_{2m}$ of
maximal symmetric w-collections in $2^{[2m]}$, we may deal with the equivalent
set for $2^{[n]^-}$, denoted as $\bfWsym([n]^-)$. The following relation (shown
in~\cite[Sec.~3]{DKK4}) is valid:
  \begin{numitem1} \label{eq:C-W}
the map $\Cscr\stackrel{\omega}\longmapsto\Wscr$ (where $\Wscr$ is obtained
from $\Cscr$ by~\refeq{CpCpp},\refeq{WpWpp}) gives a bijection between
$\bfWsym_n$ and $\bfWsym([n]^-)$.
  \end{numitem1}

Using this, we assign flips for $\bfWsym_{n}$ as natural analogs of flips for
$\bfWsym_{2m}$. More precisely, we associate to each $\Cscr\in\bfWsym_{n}$ the
corresponding permutation $\sigma$ on $[m]$, i.e., the one associated to
$\Wscr=\omega(\Cscr)$ according to~\refeq{combi-midline} (where $K$ is the
combi with $V_K=\Wscr$ and where the last vertex $v_m$ is now of the form
$[(m+2)..n]$). In other words, the sequence $\Mscr:=([m]=A_0,A_1,\ldots,
A_m=[(m+2)..n])$ of subsets (vertices) on the middle line $ML$ of the zonogon
$Z(2m,2)$ generated by the vectors $\xi_1,\ldots,\xi_m,\xi_{m+2},\ldots,\xi_n$
determines two sequences in $\Cscr$, both ``agreeable with $\sigma$''; namely,
$\Mscr$ and its symmetric (in $Z(n,2)$) sequence
  $$
  \Mscr^+:=(A_0^\ast=A_0\cup\{m+1\},\, A_1^\ast=A_1\cup\{m+1\},
  \ldots, A_m^\ast=A_m\cup\{m+1\})
  $$

Geometrically, we cut the combi $K$ along the middle line $ML$, preserve the
lower half-combi $\Klow$ (bounded from above by $ML$ containing $\Mscr$), and
shift the upper half-combi $\Kup$ in the vertical direction by the vector
$\xi_{m+1}$  (which is of the form $(0,y_{m+1})$, by~\refeq{sym-gen}). The
shifted half-combi, denoted as $\Kuppl$, is bounded from below by the line
$ML^+:=ML+\xi_{m+1}$ containing $\Mscr^+$ and has the vertex set
$\Cscr''=\{X\cup\{m+1\}\colon X\in V_{\Kup}=\Wscr''\}$. Connecting each vertex
$A_i$ with $A^\ast_i$ by the vertical segment congruent to $\xi_{m+1}$, we
obtain the polyhedral subdivision of $Z(n,2)$ formed by $\Klow$, $\Kuppl$ and
$m$ rectangles $\rho_1,\ldots,\rho_m$ between $ML$ and $ML^+$; we denote it by
$K^+$ and call a \emph{pseudo-combi}. An example with $n=7$ and $m=3$ is
illustrated in the picture.

\vspace{-0.1cm}
\begin{center}
\includegraphics[scale=0.9]{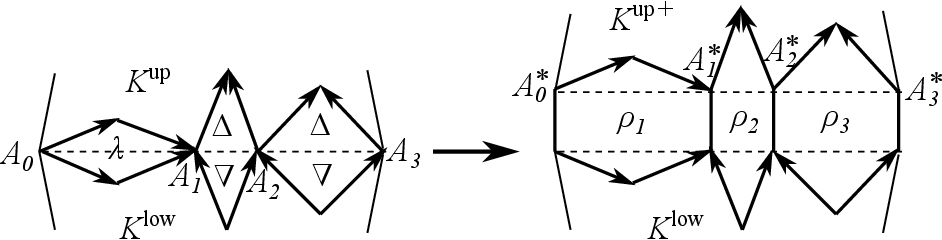}
\end{center}
\vspace{-0.2cm}

Based on the above observations, we have a natural correspondence between flips
in $\Wscr$ and $\Cscr$. Under this correspondence, every symmetric flip in
$\Wscr$, which replaces a set $Xik\in\Wscr'$  by $Xj$ (in the presence of
``four witnesses'' $Xi,Xk,Xij,Xjk$, where  $i<j<k$ and
$X\subseteq[m]-\{i,j,k\}$) and simultaneously replaces the set $Y\in \Wscr''$
symmetric to $Xik$ by $Y'$ symmetric to $Xj$, gives rise to an analogous flip
in $\Cscr$. The latter acts similarly in $\Cscr'=\Wscr'$ and replaces
$Y\cup\{m+1\}$ by $Y'\cup\{m+1\}$ in $\Cscr''$.
 \smallskip

Finally, let $\Cscr$ be \emph{strongly separated}. One easily checks that so is
$\Wscr$. Symmetric hexagonal flips in $\Wscr$ determine flips in $\Cscr$ as
described above.

As to a big flip in a strongly separated collection $\Wscr$, which replaces a
vertex $A_i$ by $A'_i$ (as shown in Fig.~\ref{fig:n=4}) and accordingly changes
the permutation $\sigma$ on $[m]$, the corresponding flip in $\Cscr$ replaces
the pair $A_i,A_i^\ast$ by $A'_i,{A'}^\ast_i$.
%In terms of the ``pseudo-tiling'' associated to $\Cscr$, the swapping fragments

%----------------------- Sec. 5

\section{Strongly $r$-separated collections and cubillages} \label{sec:rsep-cubil}

As a generalization of the usual strong separation, \cite{GP} introduced a
concept of (strong) $r$-separation for subsets of $[n]$. Recall that for
$r\in\Zset_{> 0}$, sets $A,B\subseteq[n]$ are called \emph{strongly
$r$-separated} if there are no elements $i_0<i_1<\ldots<i_{r+1}$ of $[n]$ that
alternate in $A-B$ and $B-A$. Accordingly, a collection $\Ascr\subseteq
2^{[n]}$ is called strongly $r$-separated if any two of its members are such.
%  \medskip

In particular, the strong separated sets are strong 1-separated, and the chord
separated ones are 2-separated.  In what follows (except for Appendix~C) we
will deal with merely strong, not weak, $r$-separation, omitting the adjective
``strong'' everywhere.

In Sect.~\SEC{symm-r-even} we will consider symmetric $r$-separated collections
in $2^{[n]}$ when $n$ and $r$ are even, and  describe the flip structure for
the classes of \emph{size-maximal} collections $\Sscr$ among these, i.e., with
$|\Sscr|$ equal to $s_{n,r}$ defined in Sect.~\SEC{intr}. (The case with $n$
even and $r$ odd will be discussed in Appendix~\SEC{symm-r-odd}, while that
with $n$ odd and $r$ even in Appendix~\SEC{n-odd}.) These collections (in each
case) are \emph{representable}, which means that they can be represented by the
vertex sets of cubillages on a cyclic zonotope of dimension $r+1$. The purpose
of this section is to give necessary definitions and review some known results
on cubillages that will be used later. Some facts that we quote here can be
found in~\cite{DKK3}.

%--------------------------SSEC
\subsection{Cyclic zonotope and cubillages.} \label{ssec:zon-cubil}

Let $n,d$ be integers with $n\ge d>1$. A \emph{cyclic configuration} of size
$n$ in $\Rset^d$ is meant to be an ordered set $\Xi$ of $n$ vectors
$\xi_i=(\xi_i(1),\ldots,\xi_i(d))\in\Rset^d$, $i=1,\ldots,n$, satisfying
  \begin{numitem1} \label{eq:cyc_conf}
\begin{itemize}
\item[(a)] $\xi_i(1)=1$ for each $i$, and
\item[(b)]
for the $d\times n$ matrix $A$ formed by $\xi_1,\ldots,\xi_n$ as columns
(in this order), any flag minor of $A$ is positive.
  \end{itemize}
  \end{numitem1}

(A typical sample of such a $\Xi$ is generated
by the Veronese curve; namely, take reals $t_1<t_2<\cdots<t_n$ and assign
$\xi_i:=\xi(t_i)$, where $\xi(t)=(1,t,t^2,\ldots,t^{d-1})$.)
  \medskip

\noindent\textbf{Definitions.}
The \emph{zonotope} $Z=Z(\Xi)$ generated by
$\Xi$ is the Minkowski sum of line segments $[0,\xi_i],\ldots,[0,\xi_n]$. A
\emph{cubillage} (called also a ``fine zonotopal tiling'' in the literature) is
a subdivision $Q$ of $Z$ into $d$-dimensional parallelotopes such that any two
either are disjoint or share a face, and each face of the boundary of $Z$ is
contained in some of these parallelotopes. For brevity, we refer to these
parallelotopes as \emph{cubes}.
 \medskip

When $n,d$ are fixed, the choice of one or another cyclic
configuration $\Xi$ (subject to~\refeq{cyc_conf}) does not matter in essence,
and we unify notation $Z(n,d)$ for $Z(\Xi)$, referring to it
as the \emph{cyclic zonotope} of dimension $d$ having $n$ colors.

Each subset $X\subseteq [n]$ naturally corresponds to the point $\sum_{i\in
X}\xi_i$ in $Z(n,d)$, and the cardinality $|X|$ is called the \emph{height} or
\emph{level} of this subset/point. (W.l.o.g., we usually assume that all
combinations of vectors $\xi_i$ with coefficients 0,1 are different.)

Depending on the context, we may think of a cubillage $Q$ on $Z(n,d)$ in two
ways: either as a set of $d$-dimensional cubes (and write $C\in Q$ for a cube
$C$) or as a polyhedral complex. The 0-, 1-, and $(d-1)$-dimensional faces of
$Q$ are called \emph{vertices}, \emph{edges}, and \emph{facets}, respectively.
By the subset-to-point correspondence, each vertex is identified with a subset
of $[n]$. In turn, each edge $e$ is a parallel translation of some segment
$[0,\xi_i]$; we say that $e$ has \emph{color} $i$, or is an $i$-\emph{edge}.
When needed, $e$ is regarded as a directed edge (according to the direction of
$\xi_i$). The set of vertices of $Q$ is denoted by $V_Q$. A face $F$ of $Q$ can
be denoted as $(X\,|\,T)$, where $X\subset[n]$ is the bottommost vertex, or
simply the \emph{bottom}, and $T\subset[n]$ is the set of colors of edges, or
the \emph{type}, of $F$ (note that $X\cap T=\emptyset$ always holds). An
important correspondence shown by Galashin and Postnikov is that
  \begin{numitem1} \label{eq:cub-dsepar}
\cite{GP} the map $Q\mapsto V_Q$ gives a bijection between the set of
cubillages on $Z(n,d)$ and the set of size-maximal $(d-1)$-separated
collections in $2^{[n]}$ (where the vertices are regarded as subsets of $[n]$).
   \end{numitem1}

In particular, every cubillage $Q$ can be uniquely restored from its vertex set
$V_Q$. An explicit construction is based on the following property (see,
e.g.,~\cite{DKK3}):
   \begin{numitem1} \label{eq:restore}
for $X,T\subset[n]$ with $X\cap T=\emptyset$, if a cubillage $Q$ contains the
vertices $X\cup S$ for all $S\subseteq T$, then $Q$ has the face $(X\,|\, T)$.
  \end{numitem1}

%-------------------------SSEC
\subsection{Membranes and capsids.} \label{ssec:memb-capsid}

Certain subcomplexes in a cubillage are of importance to us. To define them,
let $\pi$ be the projection $\Rset^{d+1}\to\Rset^d$ given by
$x=(x(1),\ldots,x(d+1))\mapsto (x(1),\ldots,x(d))=:\pi(x)$ for
$x\in\Rset^{d+1}$. From~\refeq{cyc_conf} it follows that if a set $\Xi'$ of
vectors $\xi'_1,\ldots,\xi'_n$ forms a cyclic configuration in $\Rset^{d+1}$,
then the set $\Xi$ of their projections $\xi_i:=\pi(\xi'_i)$, $i=1,\ldots, n$,
is  a cyclic configuration in $\Rset^d$. So $Z(\Xi)=\pi (Z(\Xi'))$, and we may
liberally say that $\pi$ projects the zonotope $Z(n,d+1)$ onto $Z(n,d)$.

For a closed subset $U$ of points in $Z(n,d+1)$, let $U^{\rm fr}$ ($U^{\rm
rear}$) denote the subset of $U$ ``seen'' in the direction of the last,
$(d+1)$-th, coordinate vector $e_{d+1}$ (resp. $-e_{d+1}$), i.e., formed by the
points $x\in U$ such that there is no $y\in U$ with $\pi(y)=\pi(x)$ and
$y(d+1)<x(d+1)$ (resp. $y(d+1)>x(d+1)$). We call $U^{\rm fr}$ ($U^{\rm rear}$)
the \emph{front} (resp. \emph{rear}) \emph{side} of $U$.
\medskip

\noindent\textbf{Definition.} ~A \emph{membrane} of a cubillage $Q'$ on
$Z(n,d+1)$ is a subcomplex $M$ of $Q'$ such that $\pi$ homeomorphically
projects $M$ (regarded as a subset of $\Rset^{d+1}$) on $Z(n,d)$.
  \medskip

Then each facet of $Q'$ occurring in $M$ is projected to a cube of dimension
$d$ in $Z(n,d)$ and these cubes constitute a cubillage $Q$ on $Z(n,d)$, denoted
as $\pi(M)$ as well.

Sometimes it is useful to deal with a membrane $M$ in the zonotope
$Z'=Z(n,d+1)$ without specifying a cubillage on $Z'$ to which $M$ belongs. In
this case, $M$ is meant to be a $d$-dimensional polyhedral complex lying in
$Z'$ whose vertex set consists of subsets of $[n]$ (regarded as points) and
corresponds to the vertex set of some cubillage $Q$ on $Z=Z(n,d)$.
Equivalently, the projection $\pi$ establishes an isomorphism between $M$ and
$Q$. We call such an $M$ an \emph{(abstract) membrane} in $Z'$ and denote as
$M_Q$. Both notions of membranes are ``consistent'' since (see,
e.g.~\cite{DKK3})
  \begin{numitem1} \label{eq:membr-in-zon}
for any membrane $m$ in $Z'$, there exists a cubillage on $Z'$ containing $M$.
  \end{numitem1}

Two s-membranes in $Z'$ are of an especial interest. These are the \emph{front
side} $\Zpfr$ and the \emph{rear side} $\Zprear$ of $Z'$. Their projections
$\pi(\Zpfr)$ and $\pi(\Zprear)$ (regarded as complexes) are called the
\emph{standard} and \emph{anti-standard} cubillages on $Z(n,d)$, respectively.
Such cubillages in dimension $d=2$ (viz. rhombus tilings) with $n=4$ are drawn
in Fig.~\ref{fig:st-anst}.

\begin{figure}[htb]
 \vspace{-0.3cm}
\begin{center}
\includegraphics{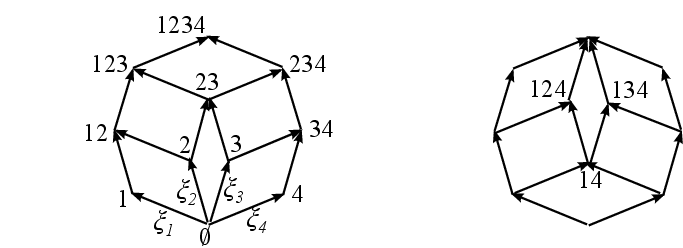}
\end{center}
\vspace{-0.5cm}
 \caption{left: standard tiling; right: anti-standard tiling}
 \label{fig:st-anst}
  \end{figure}

In particular, if $n=d+1$, then $Z'$ is nothing else than the
$(d+1)$-dimensional cube $(\emptyset\,|\,[d+1])$, and there are exactly two
membranes in $Z'$, namely, $\Zpfr$ and $\Zprear$.
  \medskip

\noindent\textbf{Definitions.}
Let $C=(X\,|\,T)$ be a $(d+1)$-dimensional cube
in $Z'=Z(n,d+1)$ whose type $T$ consists of elements $p_1<\cdots<p_{d+1}$ of
$[n]$. Following terminology in~\cite[Sec.~8]{DKK3}, the image $\pi(C)$ of $C$,
which is denoted as $(X\,|\,T)$ as well, is called the \emph{capsid} in
$Z=Z(n,d)$ with the bottom $X$ and type $T$. Since $\Cfr$ and $\Crear$ are the
only membranes in $C$, there are exactly two possible cubillages in the capsid
$\frakD=\pi(C)$, which are just $\pi(\Cfr)$ and $\pi(\Crear)$. The former
(latter) looks like the standard (resp. anti-standard) cubillages (using
terminology as above), and we say that $\frakD$ has the \emph{standard filling}
(resp. the \emph{anti-standard filling}), and denote it as $\frakDst$ (resp.
$\frakDant$).
 \medskip

The fillings of $\frakD$ are formed by the following cubes (cf.,
e.g.~\cite[Exp.~(A.1)]{DKK5}):
   \begin{numitem1} \label{eq:capsid}
   \begin{itemize}
\item[(i)]
$\frakDst$ consists of the cubes $F_i=(X\,|\,T-p_i)$ and
$G_j=(Xp_j\,|\,T-p_j)$, where $d-i$ is odd (i.e., $i=d+1,d-1,\ldots$) and $d-j$
is even ($j=d,d-2,\ldots$);
\item[(ii)]
$\frakDant$ consists of the cubes $F_i=(X\,|\,T-p_i)$ and
$G_j=(Xp_j\,|\,T-p_j)$, where $d-i$ is even and $d-j$ is odd.
  \end{itemize}
   \end{numitem1}

\noindent(As before, for disjoint subsets $A$ and $\{a,\ldots,b\}$ of $[n]$, we
use the abbreviated notation $Aa\ldots b$ for $A\cup\{a,\ldots,b\}$, and write
$A-c$ for $A-\{c\}$ when $c\in A$.)
 \medskip

Also one can check that (see~\cite[Prop.~8.1]{DKK3} or~\cite[Exp.~(3.2)]{DKK5})
  \begin{numitem1} \label{eq:inter_capsid}
for $\frakD$ as above, there is a unique vertex in the interior of $\frakDst$,
namely, $X\cup\{p_i\colon d-i\;\;{\rm even}\}$, denoted as $\Ist_{X,T}$, and a
unique vertex in the interior of $\frakDant$, namely,  $X\cup\{p_i\colon
d-i\;\; {\rm odd}\}$, denoted as $\Iant_{X,T}$.
  \end{numitem1}

\noindent\textbf{Definition.} Suppose that a cubillage $Q$ on $Z(n,d)$ contains
a capsid $\frakD$ as above having the standard (anti-standard) filling. Then
the replacement of $\frakDst$ by $\frakDant$ (resp. $\frakDant$ by $\frakDst$)
is called the \emph{raising flip} (resp. \emph{lowering flip}) in $Q$ using
$\frakD$. Such flips are denoted as $\frakDst\leadsto\frakDant$ and
$\frakDant\leadsto\frakDst$.
\medskip

\noindent(A similar mutation in a fine zonotopal tiling was introduced for
$d=3$ in~\cite[Sec.~3]{gal}). The resulting set of cubes is again a cubillage
on $Z$. Let $\bfQ_{n,d}$ denote the set of all cubillages on $Z(n,d)$. The
following property is of importance (cf.~\cite[Th.~D.1]{DKK3}):
  \begin{numitem1} \label{eq:cap_fl_poset}
for $n,d$ arbitrary, the directed graph $\Gamma_{n,d}$ whose vertex set is
$\bfQ_{n,d}$ and whose edges are the pairs $(Q,Q')$ such that $Q'$ is obtained
from $Q$ by a raising flip using some capsid is \emph{acyclic} and has unique
minimal (zero-indegree) and maximal (zero-outdegree) vertices, which are the
standard cubillage $\Qst_{n,d}$ and the anti-standard cubillage $\Qant_{n,d}$
on $Z(n,d)$, respectively.
\end{numitem1}

As a consequence, any two cubillages on $Z(n,d)$ can be connected by a series
of capsid flips, and $\Gamma_{n,d}$ determines a poset with the minimal element
$\Qst_{n,d}$ and the maximal element $\Qant_{n,d}$.
%, denoted as $(\bfQ_{n,d},\prec)$.

%-------------------------SSEC 5.3

\subsection{Partial order on cubes and inversions.} \label{ssec:invers}

For two cubes $C,C'$ of a cubillage $Q$, if the rear side $\Crear$ of $C$ and
the front side $\Cpfr$ of $C'$  share a facet, we say that $C$
\emph{immediately precedes} $C'$. A known fact if that
  \begin{numitem1} \label{eq:nat_order}
the directed graph whose vertices are the cubes of a cubillage $Q$ and whose
edges are the pairs $(C,C')$ such that $C$ immediately precedes $C'$ is
acyclic.
  \end{numitem1}
This determines a partial order on the set of cubes of $Q$, called
in~\cite{DKK3} the \emph{natural order} on $Q$; we denote it as $(Q,\prec)$ or
$\prec_Q$.

We also will use the fact that the restriction of $\prec_Q$ to a capsid gives a
linear order. More precisely, the following is valid
(cf.~\cite[Prop.~10.1]{DKK3}):
  \begin{numitem1} \label{eq:order_capsid}
For a capsid $\frakD=(X\,|\,T=(p_1<p_2<\cdots<p_{d+1}))$ in $Q$ and for
$i=1,\ldots, d+1$, let $C_i$ denote the cube in the filling of $\frakD$ having
the type $T-p_i$ (which exists and unique by~\refeq{capsid}). Then
 \begin{itemize}
\item[(i)]
$C_{d+1}\prec C_d\prec \cdots\prec C_1$ if $\frakD$ has the standard filling
$\frakDst$, and
\item[(ii)]
$C_1\prec C_2\prec\cdots \prec C_{d+1}$  if $\frakD$ has the anti-standard
filling $\frakDant$.
  \end{itemize}
  \end{numitem1}

Next, the set $\Mscr(Q')$ of membranes of a cubillage $Q'$ on $Z'=Z(n,d+1)$
forms a distributive lattice. To see this, let us associate with a membrane
$M\in\Mscr(Q')$ the set of cubes of $Q'$, denoted as $Q'(M)$, lying between
$\Zpfr$ and $M$ (i.e., \emph{before} $M$, in a sense). One easily shows that if
$C,C'$ are cubes in $Q'$ such that $C$ immediately precedes $C'$ and $C'\in
Q'(M)$, then $C\in Q'(M)$ as well. This implies that $Q'(M)$ forms an ideal of
the natural order $\prec_{Q'}$. Conversely, any ideal of $\prec_{Q'}$ is
representable as $Q'(M)$ for some membrane $M$ of $Q'$. It follows that
     \begin{numitem1} \label{eq:s-lattice}
the set $\Mscr(Q')$ of membranes of a cubillage $Q'$ on $Z'=Z(n,d+1)$ is a
distributive lattice in which for $M,M'\in\Mscr(Q')$, the w-membranes $M\wedge
M'$ and $M\vee M'$ satisfy $Q'(M\wedge M')=Q'(M)\cap Q'(M')$ and
$Q'(M\vee M')=Q'(M)\cup Q'(M')$; the minimal and maximal elements
of this lattice are $\Zpfr$ and $\Zprear$, respectively.
  \end{numitem1}

Another important known fact is that the set of \emph{types} $T$ of the cubes
$C=(X\,|\,T)$ in the ideal $Q'(M)$ does not depend on $Q'$, in the sense that
any two cubillages on $Z'$ containing the same membrane $M$ have equal sets of
types of cubes before $M$ (see, e.g.,~\cite{KV,zieg}).  This set of types is
denoted as  $\Inver(M)$ and  called the set of \emph{inversions} of $M$. We
also say that this is the set of inversions of the cubillage $Q=\pi(M)$ on
$Z(n,d)$ and use notation $\Inver(Q)$ for $\Inver(M)$.

%-------------------------SSEC 5.4

\subsection{Symmetric cubillages.} \label{ssec:sym_cub}

A cubillage $Q$ on $Z=Z(n,d)$ is called \emph{symmetric} if its vertex set
$V_Q$ is symmetric. By~\refeq{cub-dsepar}, such cubillages are bijective to
symmetric (strongly) $r$-separated collections $\Sscr$ with $r=d-1$ and
$|\Sscr|=s_{n,r}$ (earlier we have called such collections size-maximal and
representable and denoted their set by $\bfSsym_{n,d}$).

In fact, the $\ast$-symmetry on the vertices of $Q$ is extended in a natural
way to the faces (edges, facets, cubes) of $Q$. More precisely, if
$F=(X\,|\,T)$ is a face of $Q$ with $T=(i_1<\cdots<i_r)$, where $r\le d$, then
$F$ has the vertex set $V_F=\{XS=X\cup S\colon S\subseteq T\}$. By the symmetry
of $V_Q$, $Q$ contains the collection of vertices (subsets of $[n]$) symmetric
to those in $V_F$; this collection is viewed as $\{(XT)^\ast \cup S'\colon
S'\subseteq T^\circ\}$, where we write $T^\circ$ for
$(i_r^\circ<\cdots<i_1^\circ)$. (This follows from the identity
$(XS)^\ast=(XT)^\ast\cup (T^\circ-S^\circ)$ for any $S\subseteq T$, which is
valid when $X\cap T=\emptyset$; a verification of the identity is
straightforward and we leave it to the reader as an exercise.) Then,
by~\refeq{restore}, $Q$ contains the face $((XT)^\ast\,|\, T^\circ)$, which is
regarded as symmetric to $F$ and denoted as $F^\ast$. (When $|T|=d$, we obtain
a cube $C$ of $Q$ and its symmetric cube $C^\ast$.)

Strictly speaking, the above construction gives a symmetric
``combinatorial-cubic'' complex embedded in $Z(n,d)$. However, when needed, we
can use a ``purely geometric'' definition, as follows. Define the generating
vectors $\xi_1,\ldots \xi_n\in \Rset^d$  as in~\refeq{cyc_conf} in a
``symmetrized Veronese form'', by
  \begin{numitem1} \label{eq:symm_cyc_gen}
$\xi_i=(1,t_i,t_i^2,\ldots, t_i^{d-1})$; ~$t_1<\cdots<t_n$; ~and
$t_{i^\circ}=-t_i$, where $i^\circ:=n+1-i$.
  \end{numitem1}
(Also we assume that all 0,1-combinations of these vectors are different.) In
particular, for $i=1,\ldots,\lfloor n/2\rfloor$, we have $t_i<0$,
~$t_{i^\circ}>0$, and $j$-th coordinate of $\xi_{i^\circ}$ is $(-t_i)^{j-1}$.
The zonotope $Z=Z(\Xi=(\xi_1,\ldots,\xi_n))$ has the center at the point
$\zeta_Z:=\frac12(\xi_1+\cdots+\xi_n)$ and admits two involutions:
  \smallskip

(a) the central symmetry $\nu$ w.r.t. $\zeta_Z$, which sends $x$ to
$\nu(x)=2\zeta_Z-x$, and

(b) the symmetry $\mu$ w.r.t. the subspace $\{x\in \Rset^d\colon x(p)=0$ if $p$
is even$\}$, which sends $x\in Z$ to $\mu(x)=(x(1),-x(2),\ldots,
(-1)^{j-1}x(j),\ldots)$.
  \smallskip

The composition $\sigma:=\mu\circ\nu=\nu\circ\mu$ is again an involution on
$Z$, and one can check that the linear map $\sigma$ gives the desired symmetry
$X\mapsto X^\ast$ on the subsets $X\subseteq [n]$ (regarded as points
$\sum(\xi_i\colon i\in X)$). Moreover, $\sigma$ is orthonormal, and for each
cube $C=(X\,|\,T)$ in $Z$, $\sigma(C)$ is congruent (up to reversing) to $C$,
giving the cube $C^\ast$.

Using this geometric setting, let us demonstrate the following useful fact.

   \begin{lemma} \label{lm:fr-rear}
{\rm(i)} If $d$ is even, then the front side $\Zfr$ of $Z=Z(n,d)$ is
self-symmetric, and similarly for the rear side $\Zrear$.

{\rm(ii)} If $d$ is odd, then $\Zfr$ and $\Zrear$ are symmetric to each other.
  \end{lemma}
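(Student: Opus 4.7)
The plan is to compute the involution $\sigma=\mu\circ\nu$ in coordinates and observe how it interacts with the projection $\pi$ that drops the last coordinate, since this projection is what defines $\Zfr$ and $\Zrear$. By definition $\nu(x)(j)=2\zeta_Z(j)-x(j)$, and $\mu$ negates $\nu(x)(j)$ precisely when $j$ is even, so
\[
\sigma(x)(j)=\begin{cases}2\zeta_Z(j)-x(j) & \text{if $j$ is odd,}\\ x(j)-2\zeta_Z(j) & \text{if $j$ is even.}\end{cases}
\]
The key simplification is that $\zeta_Z(j)=\tfrac12\sum_i t_i^{j-1}=0$ for every even $j$: the exponent $j-1$ is odd, so the assumption $t_{i^\circ}=-t_i$ from~\refeq{symm_cyc_gen} makes the $n$ terms cancel in $\pm$-pairs (with the middle $t_{(n+1)/2}=0$ when $n$ is odd). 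Hence $\sigma$ fixes every even coordinate and reflects every odd coordinate about $\zeta_Z(j)$.

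Because $\sigma$ acts coordinate-wise, it preserves the fibers of $\pi:\Rset^d\to\Rset^{d-1}$: if $\pi(x)=\pi(y)$ then $\pi(\sigma(x))=\pi(\sigma(y))$. The whole question therefore reduces to how $\sigma$ acts on the $d$-th coordinate within each fiber. In case~(i), $d$ is even, so $\sigma(x)(d)=x(d)$; within every fiber of $\pi$ the order of the $d$-th coordinate is preserved, and the fiberwise minima (resp.\ maxima) of $Z$ go to fiberwise minima (resp.\ maxima), which yields $\sigma(\Zfr)=\Zfr$ and $\sigma(\Zrear)=\Zrear$. In case~(ii), $d$ is odd, and $\sigma(x)(d)=2\zeta_Z(d)-x(d)$ is orientation-reversing in the last coordinate; the fiberwise minima are carried to fiberwise maxima, giving $\sigma(\Zfr)=\Zrear$ and $\sigma(\Zrear)=\Zfr$.

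The whole argument is essentially a coordinate computation, and the only point that is not pure unwinding of definitions is the vanishing $\zeta_Z(j)=0$ for even $j$, which I expect to be the only ``real'' step in the proof.
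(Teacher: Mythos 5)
Your proof is correct and takes essentially the same route as the paper: both arguments are coordinate computations on the generating vectors exploiting $t_{i^\circ}=-t_i$, and both reduce to examining how the relevant involution acts on the last coordinate while preserving the fibers of $\pi$. The only cosmetic difference is that you compose $\sigma=\mu\circ\nu$ first and compute once (noting along the way that $\zeta_Z(j)=0$ for even $j$), whereas the paper treats $\nu$ abstractly (central symmetry swaps front and rear) and does the coordinate calculation only for $\mu$; your version is a slight streamlining of the same idea.
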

  \begin{proof}
For a point $x$ in $Z$, consider the points $u=\nu(x)$, $y=\mu(x)$, and
$z=\sigma(x)$. Since $\nu$ is the central symmetry on $Z$, $x$ lies on $\Zfr$
if and only if $u$ lies on $\Zrear$, and similarly, $y\in\Zfr$ if and only if
$z\in\Zrear$. Let $x\in \Zfr$. It suffices to show that
  \smallskip

($\ast$) ~if $d$ is even, then $y\in \Zrear$ (implying $z\in\Zfr$);

($\ast\ast$) ~if $d$ is odd, then $y\in\Zfr$ (implying $z\in\Zrear$).
  \smallskip

(Then ($\ast$) gives~(i) in the lemma (concerning $\Zfr$), and ($\ast\ast$)
gives (ii).)

To show ($\ast$) and ($\ast\ast$), represent $x$ as $\sum(\lambda_i\xi_i\colon
i\in X)$, where $X\subseteq[n]$ and $\lambda>0$. Then
$y=\sum(\lambda_i\xi_{i^\circ}\colon i\in X)$. Comparing the last coordinates
of $x$ and $y$, we observe that $y(d)=-x(d)$ if $d$ is even, and $y(d)=x(d)$ if
$d$ is odd (since $\xi_{i^\circ}(d)=t_{i^\circ}^{d-1}$,
$\xi_{i}(d)=t_{i}^{d-1}$, and $t_{i^\circ}=-t_i$).

For $d$ even, suppose that $y\notin \Zrear$. Then there is a point $y'$ in $Z$
such that $y'(p)=y(p)$ for $p=1,\ldots,d-1$, and $y'(d)<y(d)$. Taking the point
$x'=\mu(y')$, we obtain $x'(p)=x(p)$ for $p=1,\ldots,d-1$, and
$x'(d)=-y'(d)>-y(d)=x(d)$, contrary to the fact that $x\in\Zfr$.

When $d$ is odd, we argue in a similar way.
  \end{proof}

Considering the projection $\pi$ on $Z(n,d)$, we obtain from this lemma that
 \begin{numitem1} \label{eq:stand-antistand}
 \begin{itemize}
\item[(i)]
for $d'$ odd, each of the standard and anti-standard cubillages on $Z(n,d')$ is
self-symmetric;
\item[(ii)]
for $d'$ even, the standard and anti-standard cubillages on $Z(n,d')$ are
symmetric to each other.
  \end{itemize}
  \end{numitem1}

%--------------------------SEC 6

\section{Flips in symmetric strongly $r$-separated collections and symmetric
$(r+1)$-dimensional cubillages  when $r$ is even.}  \label{sec:symm-r-even}

This section deals with size-maximal (viz. representable) symmetric
$r$-separated collections in $2^{[n]}$ when both $n$ and $r$ are even. It turns
out that in this case the flip structure has a nice property: it forms a poset
with a unique minimal and a unique maximal elements, as we show in
Theorem~\ref{tm:Gammasym} and Corollary~\ref{cor:flip-rsepar}. Since any
maximal chord separated  (viz. strongly 2-separated) collection in $2^{[n]}$ is
representable, due to~\cite{gal}, we will obtain a nice characterization of the
flip structure for symmetric c-collections with $n$ even.

Let $\Sscr\subset 2^{[n]}$ be an $r$-separated collection which is size-maximal
and symmetric (such an $\Sscr$ exists by Corollary~\ref{cor:zon-sym-cub}
below). Let $d:=r+1$; then $d$ is odd. By~\refeq{cub-dsepar}, there exists a
cubillage $Q$ on $Z(n,d)$ such that $V_Q=\Sscr$; this $Q$ is symmetric.

First of all, using terminology and notation as in Sect.~\SSEC{memb-capsid},
consider a capsid $\frakD=(X\,|\,T)$ in $Q$; let $T=(p_1<\cdots<p_{d+1})$.
Taking the cubes symmetric to those occurring in $\frakD$, we obtain a capsid
in $Q$ as well, denoted as $\frakD^\ast$. We need two lemmas.

\begin{lemma} \label{lm:stan-stan}
If $\frakD$ has the standard filling, then so does $\frakD^\ast$, and similarly
when the filling of $\frakD$ is anti-standard.
  \end{lemma}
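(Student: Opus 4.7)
The plan is to use the characterization in~\refeq{inter_capsid}: the filling of a capsid is determined by which one of the two possible interior vertices lies in the cubillage, since those are distinct. Specifically, the standard filling of $(X\,|\,T)$ contains the unique interior vertex $\Ist_{X,T}=X\cup\{p_i\colon d-i\text{ even}\}$, and the anti-standard filling contains $\Iant_{X,T}$. Since $Q$ is symmetric, $V_Q$ is closed under $\ast$. Thus if $\frakD$ carries the standard filling, then $\Ist_{X,T}\in V_Q$, hence $(\Ist_{X,T})^\ast\in V_Q$. It would suffice to show that this symmetric vertex equals $\Ist_{(XT)^\ast,T^\circ}$, the interior vertex of the standard filling of $\frakD^\ast=((XT)^\ast\,|\,T^\circ)$; indeed, this lies in the interior of $\frakD^\ast$, so by~\refeq{inter_capsid} it uniquely determines that $\frakD^\ast$ has the standard filling. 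The anti-standard case would follow by the same argument with $\Iant$ in place of $\Ist$, or simply because each capsid in a cubillage admits exactly one of the two fillings.

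The key computational step is to apply the identity $(XS)^\ast=(XT)^\ast\cup(T^\circ-S^\circ)$ (valid for $S\subseteq T$, as noted after~\refeq{restore}) with $S=\{p_i\colon d-i\text{ even}\}$. Since $d$ is odd, this $S$ equals $\{p_i\colon i\text{ odd}\}$, and hence
\[
(\Ist_{X,T})^\ast=(XT)^\ast\cup\{p_i^\circ\colon i\text{ even}\}.
\]
I would then reindex $T^\circ$ in increasing order as $(q_1<\cdots<q_{d+1})$ with $q_k=p_{d+2-k}^\circ$. Because $d+2$ is odd, $\{p_i^\circ\colon i\text{ even}\}=\{q_k\colon k\text{ odd}\}$. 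On the other hand, $\Ist_{(XT)^\ast,T^\circ}=(XT)^\ast\cup\{q_k\colon d-k\text{ even}\}=(XT)^\ast\cup\{q_k\colon k\text{ odd}\}$, again using that $d$ is odd. The two sides coincide, which is exactly what is needed.

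The main (mild) obstacle here is simply bookkeeping of parities: the involution $\ast$ reverses the order of the elements of $T$, turning the index $i$ into $d+2-i$, which has \emph{opposite} parity to $i$ precisely because $d$ is odd; meanwhile, the defining condition "$d-i$ even" for the standard interior vertex is also tied to the parity of $d$. The two parity reversals cancel, which is what makes the statement go through when $d$ is odd (equivalently $r$ even)—consistent with the overall hypothesis of this section that $r$ is even.
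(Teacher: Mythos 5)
Your proof is correct, and it takes a genuinely different route from the paper. The paper's proof goes cube by cube: it classifies each of the $d+1$ cubes in the filling of $\frakD$ as lower/upper and odd/even (in the sense of~\refeq{capsid}), then tracks how these designations transform under $\ast$ (lower $\leftrightarrow$ upper, and odd $\leftrightarrow$ even, because $d$ is odd), concluding that the standard pattern of~\refeq{capsid}(i) is reproduced in $\frakD^\ast$. Your argument instead reduces the whole question to a single vertex computation via~\refeq{inter_capsid}: the filling of a capsid in $Q$ is determined by which of the two candidate interior points is a vertex of $Q$, so it suffices to check that $\ast$ sends $\Ist_{X,T}$ to $\Ist_{(XT)^\ast,T^\circ}$. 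This is cleaner and more economical, though it buys you the result only once~\refeq{inter_capsid} is in hand (which, in the paper, is itself established by the kind of cube-by-cube analysis the paper's proof redoes here). Both arguments isolate the same arithmetic fact, namely that reversal $i\mapsto d+2-i$ flips parity exactly when $d$ is odd, which is the reason the lemma holds in this regime. Your parenthetical justification of the anti-standard case (``each capsid admits exactly one of the two fillings'') is also sound: it is the contrapositive of the standard case applied to $\frakD^\ast$, using $(\frakD^\ast)^\ast=\frakD$.
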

  \begin{proof}
Since $\frakD$ has the top vertex $XT$ and its edges are of colors
$p_1<\cdots<p_{d+1}$, $\frakD^\ast$ has the bottom $(XT)^\ast$ and its edges
have the colors $p^\circ_{d+1}<\cdots<p^\circ_1$. Hence
$\frakD^\ast=((XT)^\ast\,|\, T^\circ)$, where
$T^\circ=\{p^\circ_{d+1},\ldots,p^\circ_1\}$. Let us examine cubes in $\frakD$
and $\frakD^\ast$, using notation as in~\refeq{capsid}.

We say that a cube $C=(X'\,|\,T')$ in a filling of $\frakD$ is \emph{lower}
(\emph{upper}) if $X'=X$ (resp. $X'T'=XT$), i.e., it is of the form
$F_i=(X\,|\,T-p_i)$ for some $i$ (resp. of the form $G_j=(Xp_j\,|\, T-p_j)$ for
some $j$). When $d-i$ or $d-j$ is odd (even) we say that the cube $C$ is
\emph{odd} (resp. \emph{even}). And similarly for the capsid $\frakD^\ast$.

One can check that for a lower cube $F_i=(X\,|\,T-p_i)$ in $\frakD$, its
symmetric cube $F_i^\ast$ in $\frakD^\ast$ is viewed as $((XT)^\ast
p_i^\circ\,|\, T^\circ-p_i^\circ)$. Therefore, $F^\ast_i$ is upper in
$\frakD^\ast$. Since $p_i^\circ$ is $(d+2-i)$-th element in the ordered
$T^\circ$ and $d$ is odd, if $F_i$ is odd (even) in $\frakD$, then $F_i^\ast$
is even (resp. odd)  in $\frakD^\ast$. As to an upper cube
$G_j=(Xp_j\,|\,T-p_j)$ in $\frakD$, its symmetric cube $G^\ast_j$ in
$\frakD^\ast$ is viewed as $((XT)^\ast\,|\, T^\circ-p_j^\circ)=(X'\,|\,
T'-p'_{d+2-j})$, implying that $G_j^\ast$ is lower, and if $G_j$ is odd (even),
then $G_j^\ast$ is even (odd).

Now suppose that $\frakD$ has the standard filling $\frakDst$. Then,
by~\refeq{capsid}(i), each lower cube in it is odd, and each upper cube is
even. From the above analysis it follows that each upper cube in the filling of
$\frakD^\ast$ is even, while each lower cube is odd. This means that
$\frakD^\ast$ has the standard filling $\frakDastst$, as required.

By similar reasonings, when $\frakD$ has the anti-standard filling, so does
$\frakD^\ast$ as well.
  \end{proof}

\begin{lemma} \label{lm:C-Cast}
Either $\frakD=\frakD^\ast$, or $\frakD$ and $\frakD^\ast$ have no cube of $Q$
in common.
 \end{lemma}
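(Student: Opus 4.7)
The plan is to suppose, for contradiction, that $\frakD\ne\frakD^\ast$ but that some cube $C$ of $Q$ lies in the filling of both, and deduce $\frakD=\frakD^\ast$. The first observation is that every vertex of the filling of $\frakD$ is a subset of $[n]$ of the form $X\cup S$ with $S\subseteq T$, i.e.\ it belongs to the lattice interval $[X,XT]$; similarly every vertex of $\frakD^\ast$ lies in $[(XT)^\ast,(XT)^\ast\cup T^\circ]=[(XT)^\ast,X^\ast]$, since a short computation from $(XT)^\ast=[n]-X^\circ-T^\circ$ gives $(XT)^\ast\cup T^\circ=[n]-X^\circ=X^\ast$. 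Consequently all $2^d$ vertices of $C$ lie in the lattice intersection
$$
\bigl[\,X\cup(XT)^\ast,\;XT\cap X^\ast\,\bigr].
$$

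Next I would exploit a duality. A direct check from~\refeq{Xast} gives the de Morgan-type identity $(XT\cap X^\ast)^\ast=X\cup(XT)^\ast$ (since $i\in(A\cap B)^\ast$ iff $i^\circ\notin A$ or $i^\circ\notin B$, iff $i\in A^\ast\cup B^\ast$, combined with $(X^\ast)^\ast=X$). Then~\refeq{AAn} yields
$$
|XT\cap X^\ast|+|X\cup(XT)^\ast|=n.
$$
Because $C$ is a $d$-cube, its vertex set spans a $d$-dimensional Boolean sublattice of the enclosing interval, forcing $|XT\cap X^\ast|-|X\cup(XT)^\ast|\ge d$; combined with the previous identity this gives $|XT\cap X^\ast|\ge(n+d)/2$. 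On the other hand, $|XT\cap X^\ast|\le\min(|XT|,|X^\ast|)=\min(|X|+d+1,\,n-|X|)$, so $|X|\in[(n-d-2)/2,(n-d)/2]$. Here the parity hypothesis is decisive: $n$ is even and $d=r+1$ is odd, so $n-d$ is odd and the only integer in that interval is $|X|=(n-d-1)/2$.

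At this value both upper bounds become tight: $|XT\cap X^\ast|=|XT|=|X^\ast|=(n+d+1)/2$. Since $XT\cap X^\ast$ is contained in both $XT$ and $X^\ast$, equality of sizes forces $XT=X^\ast$; applying $\ast$ this reads $X=(XT)^\ast$, so $\frakD$ and $\frakD^\ast$ share their bottom vertex. Moreover $T=X^\ast-X=[n]-X-X^\circ$, and the same computation applied to $T^\circ=\{i^\circ\colon i\in T\}$ gives $T^\circ=[n]-X-X^\circ=T$, so the types agree as well; hence $\frakD=\frakD^\ast$, contrary to assumption. The main technical point is the duality identity $(XT\cap X^\ast)^\ast=X\cup(XT)^\ast$—once that is in place, the remainder is a short cardinality squeeze whose sharpness hinges on $n-d$ being odd.
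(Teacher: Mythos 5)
Your proof is correct, and it takes a genuinely different route from the paper's. The paper first establishes an auxiliary ``Claim'' of a geometric flavor: if a nonempty collection of cubes from one filling of a capsid has convex union and is not the whole capsid, it is a single cube. It then applies this with $\Omega=\frakD\cap\frakD^\ast$ (which is convex as an intersection of zonotopes) to conclude that a proper nonempty intersection would be a single, necessarily self-symmetric, cube $C$, whose type would then have to be $\circ$-self-symmetric---impossible when $d$ is odd and $n$ is even. Your argument bypasses the convexity Claim entirely: you observe that a shared cube's $2^d$ vertices all lie in the lattice interval $[X\cup(XT)^\ast,\;XT\cap X^\ast]$, use the de~Morgan-type identity $(XT\cap X^\ast)^\ast=X\cup(XT)^\ast$ together with~\refeq{AAn} to relate the endpoint sizes, and then squeeze: a $d$-cube forces $|XT\cap X^\ast|\ge (n+d)/2$, while the comparison with $|XT|$ and $|X^\ast|$ pins down $|X|$ to the single admissible integer $(n-d-1)/2$ (here the parity hypothesis enters), which in turn forces $XT=X^\ast$, hence $X=(XT)^\ast$ and $T=T^\circ$, i.e.\ $\frakD=\frakD^\ast$. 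Your approach is shorter and purely lattice-theoretic/arithmetic, at the cost of not producing the paper's Claim as a byproduct---and that Claim is reused later (e.g.\ in Appendix~A, where~\refeq{a-b} invokes it for $d$ even), so the paper's detour earns its keep elsewhere. One small stylistic remark: you could invoke the integrality of $|XT\cap X^\ast|$ to strengthen the lower bound to $(n+d+1)/2$ outright, which makes the final equality of the three sizes $|XT\cap X^\ast|=|XT|=|X^\ast|$ fall out in one step rather than via the interval argument on $|X|$; but your version is equally valid.
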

 \begin{proof}
We use the following
 \medskip

\noindent\textbf{Claim}
~\emph{For arbitrary integers $n>d\ge 2$, let
$\frakD=(X\,|\,T)$ be a capsid in a cubillage $Q$ on $Z(n,d)$. Let $\Nscr$ be a
nonempty set of cubes contained in one of $\frakDst,\frakDant$ such that the
union $\Omega$ of these cubes is convex and different from the whole $\frakD$.
Then $\Nscr$ consists of exactly one cube.}
  \medskip

\noindent\textbf{Proof of the Claim.}
Let for definiteness $\Nscr$ is contained
in $\frakDst$ (when $\Nscr\subset\frakDant$, the argument is similar). Let
$\Nscr_1$ ($\Nscr_2$) be the set of lower (resp. upper) cubes in $\Nscr$, i.e.,
of the form $F_i$ (resp. $G_j$) in~\refeq{capsid}(i).

Suppose that $\Nscr_1$ contains two different cubes $C'=(X\,|\,T')$ and
$C''=(X\,|\,T'')$. Then $T'\cup T''=T$, and by the convexity of $\Omega$ and
$\frakD$, ~$\Nscr_1$ coincides with the set $\Dscr_1$ of all lower cubes in
$\frakDst$. Similarly, if $|\Nscr_2|\ge 2$, then $\Nscr_2$ is the set $\Dscr_2$
of upper cubes in $\frakDst$. Hence  at least one of $|\Nscr_1|,|\Nscr_2|$ is 0
or 1 (for otherwise $\Omega=\frakD$).

Note that any cube $F_i=(X\,|\,T-p_i)$ in $\Dscr_1$ and any cube
$G_j=(Xp_j\,|\,T-p_j)$ in $\Dscr_2$ share a facet in common, namely,
$(Xp_j\,|T-\{p_i,p_j\})$, where $T=(p_1<\cdots<p_{d+1})$. (In fact, any two
cubes in a capsid share a facet, but this is not needed to us.) This implies
that if $|\Nscr_k|\ge 2$ for some $k\in\{1,2\}$, then any cube $C$ in
$\Dscr_{3-k}$ has at least two facets shared with cubes in $\Nscr_k$, and by
the convexity of $\Omega$,  $C$ must belong to $\Nscr$.

It remains to consider the case $|\Nscr_1|=|\Nscr_2|=1$. Let $F_i$ be the cube
in $\Nscr_1$, and $G_j$ the cube in $\Nscr_2$. Let $R$ be their common facet
(of type $T-\{i,j\}$). Then the cube $F_i$ is the Minkowsky sum of $R$ and the
segment $[0,-\xi_i]$, while $G_j$ is the sum of $R$ and $[0,\xi_j]$. Since
$\Omega$ is convex, it contains the convex hull of  $F_i\cup G_j$, and by
evident geometric reasons, the latter is strictly larger than $F_i\cup G_j$
itself (taking into account that $\xi_i$ and $\xi_j$ non-colinear). This
contradiction implies $|\Nscr|=1$, as required.
 \hfill \qed
  \medskip

\noindent\textbf{Remark 2.} Using a method from~\cite[Sec.~4]{DKK3}), one can
obtain a sharper property, namely: if the union of some cubes of a cubillage on
a zonotope $Z$ forms a convex region $\Omega$, then $\Omega$ is representable
as a subzonotope in $Z$ (we omit a proof here).
  \medskip

Return to the proof of the lemma. Suppose that $\Omega:=\Dscr\cap \Dscr^\ast$
is different from $\Dscr$ and contains a cube of $Q$. Since $\Omega$ is convex,
it consists of exactly one cube $C=(X\,|\, T)$, by the Claim. Moreover, $C$ is
self-symmetric, $C=C^\ast$. But then the color set $T$ must be self-symmetric
as well. This is impossible since $|T|=d$ is odd and $n$ is even.
 \end{proof}

Based on the above lemmas, we define the desired flips in a symmetric cubillage
$Q$ on $Z(n,d)$ when $n$ is even and $d$ is odd as follows. Note that a capsid
$\frakD=(X\,|\, T)$ in $Q$ is self-symmetric (i.e., $\frakD=\frakD^\ast$) if
and only if $X=(XT)^\ast$ and $T=T^\circ$.
\medskip

\noindent\textbf{Definition.} ~For a capsid $\frakD$ in $Q$, the symmetric
raising (lowering) flip using $\frakD$ consists in the single flip
$\frakDst\leadsto \frakDant$ (resp. $\frakDant\leadsto \frakDst$) when $\frakD$
is self-symmetric, and the pair of raising (resp. lowering) flips, one
occurring in $\frakD$ and the other in $\frakD^\ast$, when
$\frakD\ne\frakD^\ast$. We also call such a transformation in $Q$ a
\emph{central flip} in the former case, and a \emph{double} flip in the latter
case.
 \medskip

Clearly such a flip makes again a symmetric cubillage on $Z(n,d)$. Moreover,
whenever $Q$ has a capsid with the standard (anti-standard) filling, we are
always able to make a symmetric raising (resp. lowering) flip  in $Q$. Let
$\bfQsym_{n,d}$ denote the set of symmetric cubillages on $Z(n,d)$. Relying
on~\refeq{cap_fl_poset}, we can conclude with the following

\begin{theorem} \label{tm:Gammasym}
For $n$ even and $d$ odd, the directed graph $\Gammasym_{n,d}$ whose vertex set
is $\bfQsym_{n,d}$ and whose edges are the pairs $(Q,Q')$ such that $Q'$ is
obtained from $Q$ by one symmetric (central or double) raising flip is acyclic
and has unique minimal (zero-indegree) and maximal (zero-outdegree) vertices,
which are the standard cubillage $\Qst_{n,d}$ and the anti-standard cubillage
$\Qant_{n,d}$ on $Z(n,d)$, respectively.
 \end{theorem}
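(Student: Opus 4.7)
The plan is to reduce the theorem to the non-symmetric version \refeq{cap_fl_poset} via Lemmas \ref{lm:stan-stan} and \ref{lm:C-Cast}. First, since $d=r+1$ is odd, property \refeq{stand-antistand}(i) implies that both $\Qst_{n,d}$ and $\Qant_{n,d}$ are self-symmetric and hence belong to $\bfQsym_{n,d}$. Second, each symmetric raising flip in $\Gammasym_{n,d}$ is, by construction, either a single raising flip (in the central case) or a pair of commuting raising flips (in the double case) of $\Gamma_{n,d}$; hence any quantity strictly monotone under raising flips of $\Gamma_{n,d}$, say $|\Inver(Q)|$, is strictly monotone under symmetric raising flips as well, which yields acyclicity of $\Gammasym_{n,d}$.

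The core of the argument is to show that $\Qant_{n,d}$ is the only vertex of $\Gammasym_{n,d}$ with zero outdegree; the dual statement for $\Qst_{n,d}$ and zero indegree follows by a symmetric argument using lowering flips. So let $Q\in\bfQsym_{n,d}$ with $Q\neq\Qant_{n,d}$. By \refeq{cap_fl_poset}, the non-symmetric graph $\Gamma_{n,d}$ has an outgoing edge at $Q$, so $Q$ contains some capsid $\frakD=(X\,|\,T)$ whose filling in $Q$ is standard. By the symmetry of $V_Q$, the symmetric capsid $\frakD^\ast$ is likewise present in $Q$, and Lemma \ref{lm:stan-stan} guarantees that its filling in $Q$ is standard as well. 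Lemma \ref{lm:C-Cast} now gives a dichotomy: either $\frakD=\frakD^\ast$, in which case the central raising flip $\frakDst\leadsto\frakDant$ is admissible; or $\frakD$ and $\frakD^\ast$ share no cube of $Q$, in which case the individual raising flips inside $\frakD$ and inside $\frakD^\ast$ act on disjoint regions and can be performed simultaneously as a double flip. Either way the resulting cubillage $Q'$ has vertex set differing from $V_Q$ by a $\ast$-equivariant modification, so $V_{Q'}$ is symmetric and $Q'\in\bfQsym_{n,d}$, witnessing a raising symmetric flip out of $Q$.

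Connectedness of $\Gammasym_{n,d}$ then follows immediately: from any $Q\in\bfQsym_{n,d}$, iterating outgoing raising flips must terminate (by acyclicity) at the unique zero-outdegree vertex $\Qant_{n,d}$, and dually incoming lowering flips terminate at $\Qst_{n,d}$, so every vertex is connected to both extremes in the underlying undirected graph. The substantive obstacle in this program is the existence of a symmetric flip at every non-extremal $Q$, which is handled by the three-step chain: \refeq{cap_fl_poset} produces \emph{some} flippable capsid, Lemma \ref{lm:stan-stan} propagates the filling type to its $\ast$-image, and Lemma \ref{lm:C-Cast} rules out any obstruction coming from a shared cube. The parity hypotheses ($n$ even, $d$ odd) enter precisely at the two endpoints of this chain — in \refeq{stand-antistand}(i) to place $\Qst_{n,d}$ and $\Qant_{n,d}$ inside $\bfQsym_{n,d}$, and in Lemma \ref{lm:C-Cast} to rule out a self-symmetric shared cube, whose type would have to be both of odd size $d$ and closed under $\circ$, which is impossible when $n$ is even.
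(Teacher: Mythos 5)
Your proof is correct and follows essentially the same route as the paper: the paper derives the theorem from \refeq{cap_fl_poset} together with the two lemmas and \refeq{stand-antistand}(i), giving only a terse remark where you have carefully filled in the steps (acyclicity via strict monotonicity of $|\Inver(Q)|$, existence of a symmetric flip at any non-extremal $Q$ via the chain \refeq{cap_fl_poset} $\Rightarrow$ Lemma~\ref{lm:stan-stan} $\Rightarrow$ Lemma~\ref{lm:C-Cast}, and connectedness by iteration). Your closing observation pinpointing exactly where the parity hypotheses enter is a nice addition that the paper does not spell out.
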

(Note that by the minimality of $\Qst_{n,d}$, any capsid in it has the standard
filling. Similarly, by the maximality of $\Qant_{n,d}$, any capsid in it has
the anti-standard  filling. Each of these two cubillages is symmetric,
by~\refeq{stand-antistand}.)

As a consequence, we obtain a description of symmetric flips in the set
$\bfSsym_{n,r}$ of representable symmetric $r$-separated collections in
$2^{[n]}$ with $n,r$ even and $r\ge 2$. More precisely, for $\Sscr\in
\bfSsym_{n,r}$, consider the symmetric cubillage $Q$ on $Z(n,d=r+1)$ with
$V_Q=\Sscr$ and a capsid $\frakD=(X\,|\, T)$ in $Q$. The set $\{XS\colon
S\subseteq T\}$ of (boundary) vertices of $\frakD$ forms a subcollection in
$\Sscr$, denoted as $\Nscr_{X,T}$. If $\frakD$ has the standard (anti-standard)
filling, then, besides $\Nscr_{X,T}$, ~$\frakD$ contains one more member of
$\Sscr$, namely, the set corresponding to the vertex $\Ist_{X,T}$ (resp.
$\Iant_{X,T}$) occurring in the interior of $\frakD$, as indicated
in~\refeq{inter_capsid}. The above observations and results give rise to the
following definition and corollary from Theorem~\ref{tm:Gammasym}.
   \medskip

\noindent\textbf{Definition.} Suppose that $\Sscr\in\bfSsym_{n,r}$ includes the
subcollection $\Nscr_{X,T}$ for some $X,T\subset[n]$ with $X\cap T=\emptyset$
and $|T|=r+2$. Then (by the size-maximality) $\Sscr$ must contain the set
either $\Ist_{X,T}$ or $\Iant_{X,T}$. The symmetric raising (lowering) flip
w.r.t. $(X,T)$ consists of the single replacement of  $\Ist_{X,T}$ by
$\Iant_{X,T}$ (resp. $\Iant_{X,T}$ by $\Ist_{X,T}$) when $\Nscr_{X,T}$ is
self-symmetric, and the pair of symmetric replacements $\Ist_{X,T}\leadsto
\Iant_{X,T}$ and $\Ist_{(XT)^\ast,T^\circ}\leadsto \Iant_{(XT)^\ast,T^\circ}$
(resp. $\Iant_{X,T}\leadsto \Ist_{X,T}$ and $\Iant_{(XT)^\ast,T^\circ}\leadsto
\Ist_{(XT)^\ast,T^\circ}$) otherwise.

 \begin{corollary} \label{cor:flip-rsepar}
For $n,r$ even, the directed graph with the vertex set $\bfSsym_{n,r}$ whose
edges are the pairs $(\Sscr,\Sscr')$ such that $\Sscr'$ is obtained from
$\Sscr$ by a symmetric raising flip as in the above definition is acyclic and
determines a poset on $\bfSsym_{n,r}$ having unique minimal and maximal
elements, which are the vertex sets of the standard and anti-standard
cubillages on $Z(n,r+1)$, respectively.
 \end{corollary}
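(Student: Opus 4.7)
The plan is to deduce the corollary from Theorem~\ref{tm:Gammasym} by transporting the graph structure along the bijection between symmetric cubillages on $Z(n,d)$ (with $d=r+1$ odd) and representable symmetric $r$-separated collections in $2^{[n]}$. First I would record that by~\refeq{cub-dsepar}, the map $Q\mapsto V_Q$ is a bijection between $\bfQsym_{n,d}$ and $\bfSsym_{n,r}$, since the symmetry of $Q$ as a complex is equivalent to the symmetry of its vertex set (as noted in Sect.~\SSEC{sym_cub}).

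Next, I would verify that under this bijection the two notions of flips correspond. Fix a symmetric cubillage $Q$ with $V_Q=\Sscr$ and a capsid $\frakD=(X\,|\,T)$ in $Q$ with $|T|=d+1=r+2$. By~\refeq{inter_capsid}, the boundary vertex set of $\frakD$ is exactly $\Nscr_{X,T}=\{XS\colon S\subseteq T\}$, and the unique interior vertex of $\frakDst$ (resp.\ $\frakDant$) is $\Ist_{X,T}$ (resp.\ $\Iant_{X,T}$). Hence $Q$ contains the capsid $\frakD$ with standard filling precisely when $\Sscr\supseteq\Nscr_{X,T}\cup\{\Ist_{X,T}\}$, and likewise for the anti-standard case. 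The single flip $\frakDst\leadsto\frakDant$ changes $Q$ only by replacing $\Ist_{X,T}$ with $\Iant_{X,T}$ in the vertex set, and by Lemmas~\ref{lm:stan-stan} and~\ref{lm:C-Cast} the companion capsid $\frakD^\ast$ (when $\frakD\ne\frakD^\ast$) has the same type of filling and is disjoint from $\frakD$, so the double flip acts independently on $\frakD$ and $\frakD^\ast$. This matches exactly the definition of a symmetric raising flip w.r.t.\ $(X,T)$ on $\Sscr$, since $\frakD^\ast=((XT)^\ast\,|\,T^\circ)$ and its interior vertices are $\Ist_{(XT)^\ast,T^\circ}$ and $\Iant_{(XT)^\ast,T^\circ}$.

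Thus the flip graph on $\bfSsym_{n,r}$ defined in the corollary is isomorphic to $\Gammasym_{n,d}$ from Theorem~\ref{tm:Gammasym}. Acyclicity is then immediate, and the unique minimal (resp.\ maximal) element of the poset on $\bfSsym_{n,r}$ is the image under $Q\mapsto V_Q$ of the standard cubillage $\Qst_{n,d}$ (resp.\ the anti-standard cubillage $\Qant_{n,d}$); both of these are symmetric by~\refeq{stand-antistand}(i), so they really lie in $\bfSsym_{n,r}$.

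The only non-trivial step is the bookkeeping in the second paragraph, namely confirming that the purely geometric flips described in Sect.~\SSEC{sym_cub} really alter the vertex set only in the claimed way, and that $\frakD^\ast\ne\frakD$ implies disjointness so that the two individual flips commute; but both of these were already established in Lemmas~\ref{lm:stan-stan} and~\ref{lm:C-Cast}, so the corollary reduces to a direct translation of Theorem~\ref{tm:Gammasym} through the bijection~\refeq{cub-dsepar} restricted to symmetric objects.
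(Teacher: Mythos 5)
Your proposal is correct and follows exactly the route the paper intends: the corollary is presented as an immediate consequence of Theorem~\ref{tm:Gammasym}, obtained by transporting the flip graph along the bijection of~\refeq{cub-dsepar} restricted to symmetric objects, using~\refeq{inter_capsid} to identify the one-vertex change and Lemmas~\ref{lm:stan-stan} and~\ref{lm:C-Cast} for the compatibility of $\frakD$ and $\frakD^\ast$. Your careful bookkeeping in the middle paragraph is a faithful unwinding of what the paper leaves implicit.
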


It is known that for $r$ even, the vertex set of $\Qst_{n,r+1}$  consists of
all $k$-intervals with $k\le r/2$ in $[n]$ and all $(r/2+1)$-intervals
containing the element 1, while the vertex set of  $\Qant_{n,r+1}$ consists of
all $k$-intervals with $k\le r/2$ and all $(r/2+1)$-intervals containing $n$;
cf., e.g.~\cite[Prop.~16.3]{DKK3}. Here a $k$-\emph{interval} is meant to be
the union of $k$ intervals, but not $k'$ intervals with $k'<k$.

In particular, it follows that the set $\bfCsym_n$ of maximal symmetric chord
separated collections in $2^{[n]}$ with $n$ even forms a poset with unique
minimal and maximal elements, which are  formed by all intervals in $[n]$ and
all 2-intervals containing 1 (in the former case) and $n$ (in the latter case).

% ------------------------------------Sec.7

\section{A relation to higher Bruhat orders} \label{sec:Bruhat}

Manin and Schekhtman~\cite{MS} introduced higher Bruhat orders as a
generalization of the notion of a weak Bruhat order (being a partial order on
the symmetric group via inversions).  Recall some definitions from~\cite{MS}.
 \medskip

\noindent\textbf{Definitions.} Consider integers $n>d>0$. A set
$P\in\binom{[n]}{d+1}$ whose elements are ordered lexicographically is called a
\emph{packet} (of size $d+1$ in $[n]$). The \emph{family} $\Fscr(P)$ is meant
to be the ordered collection of all $d$-element subsets of $P$ where these
subsets follow in the lexicographic order (i.e., if $P=(i_1<\cdots<i_{d+1})$,
then $\Fscr(P)$ is $(P-i_{d+1},\, P-i_d,\ldots, P-i_1)$. A linear (total) order
$\rho$ on all $d$-element subsets of $[n]$ is called \emph{admissible} if its
restriction to each packet $P$ is either lexicographic or anti-lexicographic,
i.e., $\rho_{|_P}$ is either identical or reverse to the order on $\Fscr(P)$.
The set of admissible orders is denoted by $A(n,d)$. Two orders $\rho,\rho'\in
A(n,d)$ are called \emph{elementarily equivalent} if they differ by
interchanging two neighbors and these neighbors are not contained in the same
packet. The quotient of $A(n,d)$ by the equivalence relation is denoted by
$B(n,d)$, and the natural projection of $A(n,d)$ to $B(n,d)$ by $\pi$. The set
$\Inver(\rho)$ of \emph{inversions} of $\rho\in A(n,d)$ consists of the packets
$P\in\binom{[n]}{d+1}$ for which $\Fscr(P)$ has the reverse order in $\rho$.
For $r=\pi(\rho)\in B(n,d)$, we write $\Inver(r)$ for $\Inver(\rho)$ (this is
correct since $\Inver(\rho)=\Inver(\rho')$ when $\pi(\rho)=\pi(\rho')$). This
provides a partial order $\prec$ on $A(n,d)$ or on $B(n,d)$ (the latter is
called the \emph{Bruhat order} for $(n,d)$); here for $r,r'\in B(n,d)$, we
write $r\prec r'$ if there is a sequence $r=r_1,r_2,\ldots, r_N=r'$ such that
for each $i=2,\ldots,N$, ~$\Inver(r_{i-1})\subset \Inver(r_i)$ and
$|\Inver(r_i)-\Inver(r_{i-1})|=1$.
  \medskip

(In particular, $A(n,1)$ is the set of linear orders $\sigma$ on $[n]$,
$\Inver(\sigma)$ is the set of pairs $i<j$ with $\sigma(i)>\sigma(j)$
(inversions), and $(A(n,1),\prec)$ turns into the weak Bruhat order on the
symmetric group $\frakS_n$.)

Kapranov and Voevodsky~\cite{KV} and Ziegler~\cite{zieg} gave a nice geometric
interpretation of higher Bruhat orders. More precisely, the following is true
(using terminology and notation from Sect.~\SEC{rsep-cubil}).
  \begin{numitem1} \label{eq:Bruhat-cub}
There is a bijection $\tau$ of $B(n,d)$ to the set $\bfQ_{n,d}$ of cubillages
on $Z(n,d)$ or, equivalently, to the set of (abstract) membranes $M$ in
$Z(n,d+1)$. Here $r\in B(n,d)$ is mapped to $M=\tau(r)$ if $\Inver(r)$ is equal
to $\Inver(M)$ (the set of types of cubes in $Q(M)$ (i.e., lying before $M$)
for any cubillage on $Z(n,d+1)$ containing $M$). Under this correspondence,
$r\prec r'$ holds if and only if the membranes $\tau(r)$ and $\tau(r')$ belong
to the same cubillage $Q$ on $Z(n,d+1)$ and the former is obtained from the
latter by a series of lowering flips using cubes of $Q$.
  \end{numitem1}

Now we are going to define a sort of symmetric Bruhat orders as follows.
\medskip

\noindent\textbf{Definitions.} For a packet $P=(p_1\prec\cdots\prec p_{d+1})$
in $[n]$, define its symmetric packet to be $P^\circ=(p_{d+1}^\circ\prec \cdots
\prec p_1^\circ)$. Accordingly, the family $\Fscr(P^\circ)$ is regarded as
symmetric to $\Fscr(P)$. We say that a linear order $\rho$ on $\binom{[n]}{d}$
is \emph{s-admissible} if for each packet $P\in\binom{[n]}{d+1}$, the
restrictions of $\rho$ to (the $d$-element subsets of) $P$ and $P^\circ$ are
either both lexicographic or both anti-lexicographic. The set of s-admissible
orders is denoted by $A^s(n,d)$. Orders $\rho,\rho'\in A^s(n,d)$ are called
\emph{elementarily equivalent} if they differ by interchanging two neighbors
not contained in the same packet and, simultaneously, by interchanging its
symmetric neighbors. The quotient of $A^s(n,d)$ by the equivalence relation is
denoted by $B^s(n,d)$, and the natural projection of $A^s(n,d)$ to $B^s(n,d)$
by $\pi^s$. This gives a symmetric set $\Inver(\rho)=\Inver(r)$ of (pairs of)
inversions, where $\rho\in A^s(n,d)$ and $r=\pi^s(\rho)$. The partial order
$\prec^s$ on $A^s(n,d)$ or on $B^s(n,d)$ is defined accordingly. So for
$r,r'\in B^s(n,d)$, we write $r\prec r'$ if there is a sequence
$r=r_1,\ldots,r_N=r'$ such that for each $i$, $\Inver(r_i)$ is obtained from
$\Inver(r_{i-1})$ by adding either one self-symmetric inversion or a pair of
symmetric ones. We call $(A^s(n,d),\prec^s)$ (or $(B^s(n,d),\prec^s$) the
\emph{symmetric Bruhat order} for $(n,d)$.
  \medskip

(Note that, acting in a somewhat similar fashion, we could attempt to formally
define a sort of \emph{skew-symmetric} Bruhat orders. Here one should exclude
from consideration the self-symmetric packets and think of \emph{admissible}
orders as those linear orders on $\binom{[n]}{d}$ whose restriction to each
packet $P$ is lexicographic if and only if the restriction to $P^\circ$ is
anti-lexicographic. But this stuff is beyond our paper.)
 \medskip

When $n$ is odd (even), $B^s(n,d)$ may be interpreted as the Bruhat order of
\emph{type B} (resp. \emph{type C}) for $(n,d)$. For an extensive discussion on
Bruhat orders of type B and C and their implementations, see~\cite{DKK6}.

Our constructions and results in Sect.~\SEC{symm-r-even} lead to the following

 \begin{theorem} \label{tm:BruhatC}
When $n$ is even and $d$ is odd, there is a bijection $\sigma$ between
$B^s(n,d)$ (of type C) and the set $\bfQsym_{n,d}$ of symmetric cubillages on
$Z(n,d)$, or, equivalently, the set of (abstract) symmetric membranes $M$ in
$Z(n,d+1)$. Here $r\in B^s(n,d)$ corresponds to $M=\sigma(r)$ if
$\Inver(r)=\Inver(M)$. Under this correspondence, $r\prec r'$ holds if and only
if $\sigma(r)$ and $\sigma(r')$ belong to the same symmetric cubillage on
$Z(n,d+1)$ and the former membrane is obtained from the latter by a series of
symmetric (double or central) lowering flips.
 \end{theorem}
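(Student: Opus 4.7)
My plan is to derive Theorem~\ref{tm:BruhatC} as the symmetric restriction of the non-symmetric Bruhat--cubillage bijection~\refeq{Bruhat-cub}. The argument splits into a set-level bijection and an order-level matching, the latter requiring a joint-realizability result for symmetric membranes.

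For the bijection, I would verify two parallel equivalences. First, an admissible order $\rho\in A(n,d)$ is s-admissible iff $\Inver(\rho)$ is closed under the packet involution $P\mapsto P^\circ$; this is immediate from the definitions, since $\rho|_P$ is anti-lex exactly when $P\in\Inver(\rho)$. Second, a membrane $M$ in $Z'=Z(n,d+1)$ is symmetric iff $\Inver(M)$ is closed under $T\mapsto T^\circ$. For this, take any cubillage $Q'\ni M$; then $\sigma(Q')$ contains $\sigma(M)$, and because $d+1$ is even, Lemma~\ref{lm:fr-rear}(i) applied to $Z'$ shows that $\sigma$ preserves the front and rear sides of $Z'$. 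Hence $\sigma$ carries the cubes of $Q'$ before $M$ to the cubes of $\sigma(Q')$ before $\sigma(M)$, and comparing types yields $\Inver(\sigma(M))=\{T^\circ:T\in\Inver(M)\}$. By the injectivity of $M\mapsto\Inver(M)$ built into~\refeq{Bruhat-cub}, this forces $M=\sigma(M)$ iff $\Inver(M)$ is $\circ$-closed. Composing with~\refeq{Bruhat-cub} gives the desired bijection $\sigma:B^s(n,d)\to\bfQsym_{n,d}$ characterized by $\Inver(r)=\Inver(\sigma(r))$.

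For the order equivalence, I would first match covers on the two sides: adding a self-symmetric packet $P=P^\circ$ to $\Inver$ corresponds to a central lowering flip using a self-symmetric cube of type $P$, while adding a $\circ$-pair $\{P,P^\circ\}$ corresponds to a double lowering flip using the $\ast$-pair of cubes of types $P$ and $P^\circ$. The ``if'' direction is then immediate: inside any common symmetric cubillage $Q'$ of $Z'$, each symmetric lowering flip removes precisely one $\ast$-orbit of cubes from the ``before'' region, so it decrements $\Inver$ by exactly one cover of $\prec^s$. The ``only if'' direction reduces to the following joint realizability: given symmetric membranes $M\le M'$ in $Z'$ with $\Inver(M)\subsetneq\Inver(M')$, there exists a symmetric cubillage of $Z'$ containing both.

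This last reduction is the main obstacle, and mirrors the role of~\refeq{Bruhat-cub} and~\refeq{membr-in-zon} in the non-symmetric setting. To attack it I would fix a symmetric cubillage $Q^\diamond$ of $Z'$, which exists by Corollary~\ref{cor:zon-sym-cub} applied to $Z(n,d+1)$ with $n$ even, and work inside the distributive lattice $\Mscr(Q^\diamond)$ of membranes from~\refeq{s-lattice}. The $\ast$-involution descends to a lattice involution there, whose fixed points form a distributive sublattice with covers given precisely by $\ast$-orbit increments of cubes, matching the covers of $B^s(n,d)$. For arbitrary symmetric $M,M'$ not necessarily lying in $\Mscr(Q^\diamond)$, I would modify $Q^\diamond$ by a sequence of symmetric capsid flips in the style of Theorem~\ref{tm:Gammasym}, restricted to the appropriate subzonotopes of $Z'$ and lifted back via Theorem~\ref{tm:membr-cub}, until a common symmetric cubillage containing both $M$ and $M'$ is produced. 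Once this joint realizability is in place, the cover matching above yields the full order equivalence of Theorem~\ref{tm:BruhatC}.
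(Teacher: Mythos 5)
The paper states Theorem~\ref{tm:BruhatC} with no proof; it is offered as a consequence of the constructions of Section~\SEC{symm-r-even} together with the Kapranov--Voevodsky--Ziegler result~\refeq{Bruhat-cub}. Your set-level bijection step is essentially sound and uses the paper's ingredients in the intended way: Lemma~\ref{lm:fr-rear}(i) (with $d+1$ even) shows $\sigma$ is front-preserving on $Z'=Z(n,d+1)$, whence $\Inver(\sigma(M))=\{T^\circ:T\in\Inver(M)\}$, so symmetric membranes are exactly those with $\circ$-closed inversion sets, and your cover-matching between $\circ$-orbits of packets and central/double flips is correct. One technical point you pass over: to conclude that this gives a \emph{bijection} from $B^s(n,d)$ (as the paper defines it, a quotient of $A^s(n,d)$ by the finer, symmetric elementary equivalence), you also need that two s-admissible orders with the same $\Inver$ are $B^s$-equivalent, i.e., that the connecting sequence of non-packet swaps can itself be taken symmetric. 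This is a symmetric analogue of a Manin--Schechtman lemma and does not follow automatically from the non-symmetric statement.

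The genuine gap is in the order-level half, and the reduction you make is itself wrong. You reduce the ``only if'' direction to: \emph{any} two symmetric membranes with $\Inver(M)\subsetneq\Inver(M')$ lie in a common symmetric cubillage of $Z'$. If that held, combining it with the easy ``if'' direction would force the inclusion order on $\circ$-closed inversion sets to coincide with the single-step order $\prec^s$. But already in the non-symmetric setting Ziegler showed the inclusion order on $B(n,d)$ and the single-step order differ in general, and indeed~\refeq{Bruhat-cub} shows joint realizability in $Z'$ fails precisely for pairs with inclusion but no single-step chain; the same distinction is expected here. What one actually has to prove is that whenever a $\prec^s$-chain exists, the two endpoints admit a common symmetric cubillage on $Z(n,d+1)$ — an argument that must exploit the chain, not just inclusion. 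Your attack on even the weaker version is not a proof: ``modify $Q^\diamond$ by a sequence of symmetric capsid flips $\ldots$ until a common symmetric cubillage containing both $M$ and $M'$ is produced'' gives no criterion for choosing the flips and no termination argument. Worse, symmetric capsid flips of cubillages on $Z(n,d+1)$ (dimension $d+1$ even) are exactly the setting of Appendix~A, where the paper only \emph{conjectures} flip-connectedness (Conjecture~(C1)); so the proposed mechanism leans implicitly on an open conjecture of the paper. This joint-realizability step is where the missing idea lies.
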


  \Xcomment{
Note also that if we deal with the class of maximal $\circ$-symmetric
s-collections on $2^{[n]}$ and, accordingly, with the set
$\circ\mbox{-}\bfQsym_{n,2}$ of $\circ$-symmetric tilings (see Remark~3 in
Sect.~\SEC{scoll}), then the corresponding flip graph determines the Bruhat
order $B^s(n,2)$, as is explained in~\cite{DKK6}.
  }

%------------------ Sec 8

\section{Interrelations between symmetric cubillages and membranes}
\label{sec:gen-theorems}

We know that each zonotope $Z(n,d)$ contains a cubillage (in particular, the
standard and anti-standard ones), and that any cubillage in $Z(n,d)$ is lifted
as a membrane in some cubillage on $Z(n,d+1)$ (by~\refeq{membr-in-zon}). This
section gives symmetric versions of those properties when the number $n$ of
colors is even (regarding the $\ast$-symmetry).

 \begin{theorem} \label{tm:cub-membr}
For $n$ even, let $Q$ be a symmetric cubillage on $Z(n,d)$. Then $Q$ contains a
symmetric membrane.
   \end{theorem}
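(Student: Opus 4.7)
The plan is to split the argument by the parity of $d$.

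\medskip
\emph{Case $d$ even.} By Lemma~\ref{lm:fr-rear}(i) the front side $\Zfr$ of $Z=Z(n,d)$ is already self-symmetric. Since $\Zfr$ is the minimum element of the membrane lattice $\Mscr(Q)$ of any cubillage $Q$ on $Z$, it serves as the desired symmetric membrane, finishing this case.

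\medskip
\emph{Case $d$ odd.} Here Lemma~\ref{lm:fr-rear}(ii) shows that $\sigma$ interchanges $\Zfr$ and $\Zrear$, so neither extremal membrane is symmetric and one needs to dig inside $\Mscr(Q)$. I would first verify that $\sigma$ induces an order-reversing, fixed-point-free involution on the cube poset $(Q,\prec_Q)$. Order-reversal follows from the definition of immediate precedence together with the fact that $\sigma$, being a linear isometry of $Z$ that flips the $d$-th coordinate through $\zeta_Z(d)$ when $d$ is odd, swaps the front and rear sides of every face; a shared facet between $\Crear$ and $\Cpfr$ therefore becomes a shared facet between $\sigma(C')^{\rm rear}$ and $\sigma(C)^{\rm fr}$, reversing the ``immediately precedes'' relation. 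Fixed-point-freeness is a parity check: a self-symmetric cube $C=(X\,|\,T)$ would require $T=T^\circ$, forcing $T$ to be a union of complementary pairs $\{i,i^\circ\}$; since $n$ is even no color is fixed by $i\mapsto i^\circ$, so $|T|$ would have to be even, contradicting $|T|=d$ odd.

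\medskip
By~\refeq{s-lattice} applied in the current dimension, $\Mscr(Q)$ is isomorphic to the lattice of ideals of $(Q,\prec_Q)$, and under this isomorphism the action of $\sigma$ on membranes transports to the involution $\bar\sigma(I):=Q\setminus\sigma(I)$ on ideals. Hence $M=\sigma(M)$ is equivalent to $\sigma(I)=Q\setminus I$, i.e., to $I$ containing exactly one cube from each $\sigma$-orbit $\{C,\sigma(C)\}$. The problem therefore reduces to the abstract claim that \emph{every finite poset $P$ equipped with a fixed-point-free order-reversing involution $\sigma$ admits a downward-closed subset $I$ with $\sigma(I)=P\setminus I$.} I would prove this by induction on $|P|$: pick any minimal element $m\in P$; then $\sigma(m)$ is maximal in $P$ and distinct from $m$. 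Applying the induction hypothesis to the $\sigma$-invariant subposet $P':=P\setminus\{m,\sigma(m)\}$ yields an ideal $I'\subseteq P'$ with $\sigma(I')=P'\setminus I'$, and $I:=I'\cup\{m\}$ then does the job: downward-closedness in $P$ holds since $m$ is minimal and the maximality of $\sigma(m)$ rules out $\sigma(m)\prec x$ for any $x\in I'$, and $\sigma(I)=\sigma(I')\cup\{\sigma(m)\}=(P'\setminus I')\cup\{\sigma(m)\}=P\setminus I$ is immediate.

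\medskip
\emph{Main obstacle.} The delicate step is the reduction in Case $d$ odd: once one recognizes that the symmetry of the cubillage forces $\sigma$ on cubes to be fixed-point-free and order-reversing, finding a symmetric membrane becomes the elementary combinatorial problem above, dispatched by a short induction. The parity observation---that $d$ odd combined with $n$ even prevents any self-symmetric cube from existing---is the small but crucial fact that keeps $\sigma$ an honest fixed-point-free involution on $(Q,\prec_Q)$ and makes the whole argument go through.
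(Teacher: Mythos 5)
Your proof is correct, and it does take a noticeably different route from the paper for the $d$ odd case (the $d$ even case is identical). Both arguments turn on the same crucial parity fact---that $\sigma$ fixes no cube when $d$ is odd and $n$ is even---but they package it differently. The paper works directly and geometrically with membranes: starting from the symmetric pair $(\Zfr,\Zrear)$, it maintains a sandwiched pair $(M,M^\ast)$ with $Q(M)\subseteq Q(M^\ast)$ and at each step picks a $\prec_Q$-minimal cube $C$ in the gap $Q(M^\ast)-Q(M)$, pushes $M$ across $C$ and $M^\ast$ across $C^\ast$, and stops once the gap is empty. You instead translate to the ideal picture via~\refeq{s-lattice}, make explicit the (true, but only implicit in the paper) fact that $\sigma$ is order-reversing on $(Q,\prec_Q)$, and then isolate a clean abstract lemma---any finite poset with a fixed-point-free order-reversing involution admits an ideal mapped to its complement---proved by the minimal/maximal pair removal induction. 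This buys a neat separation of the geometric verifications (order-reversal, fixed-point-freeness) from the purely order-theoretic construction and makes the latter a reusable lemma, whereas the paper's version avoids invoking the membrane--ideal correspondence and stays closer to the geometry. Underlying both is essentially the same inductive shrinking of an antisymmetric gap; your reformulation is arguably the more transparent of the two.
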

   \begin{theorem} \label{tm:membr-cub}
For $n$ even, let $Q$ be a symmetric cubillage on $Z(n,d)$. Then there exists a
symmetric cubillage $Q'$ on $Z(n,d+1)$ and a membrane $M$ of $Q'$ isomorphic to
$Q$, i.e., $M=M_Q$.
  \end{theorem}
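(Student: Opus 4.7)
The first step is to lift $Q$ to an abstract self-symmetric membrane $M := M_Q$ in $Z' := Z(n, d+1)$. Since $V_M$ equals $V_Q$ as a subcollection of $2^{[n]}$, and the $\ast$-symmetry on $2^{[n]}$ is intrinsic to the set $[n]$ and independent of the ambient dimension, the membrane $M$ is self-symmetric under the geometric symmetry $\sigma$ of $Z'$. By~\refeq{membr-in-zon}, there exists a cubillage $Q'_0$ of $Z'$ containing $M$; what remains is to adjust $Q'_0$, independently on each side of $M$ (such adjustments preserve $M$ as a membrane), so that the resulting cubillage $Q'$ is symmetric.

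The membrane $M$ partitions $Z'$ into the \emph{front lune} $U^+$ between $\Zpfr$ and $M$, and the \emph{rear lune} $U^-$ between $M$ and $\Zprear$. The task is to choose cubillages $R^+$ of $U^+$ and $R^-$ of $U^-$ such that $R^+ \cup R^-$ (glued along $M$) is $\sigma$-invariant. By Lemma~\ref{lm:fr-rear} applied to $Z'$, the action of $\sigma$ on the pair $(U^+, U^-)$ splits into two cases according to the parity of~$d$.

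If $d$ is even (so $d+1$ is odd), then $\sigma$ swaps $\Zpfr$ and $\Zprear$, and since $\sigma(M) = M$ one obtains $\sigma(U^+) = U^-$. Take $R^+$ to be any cubillage of $U^+$ (e.g.\ the restriction of $Q'_0$) and set $R^- := \sigma(R^+)$, which is automatically a cubillage of $U^-$. Then $Q' := R^+ \cup R^-$ is a cubillage of $Z'$ containing $M$, and $\sigma(Q') = \sigma(R^+) \cup \sigma(R^-) = R^- \cup R^+ = Q'$, as required.

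The harder case is $d$ odd (so $d+1$ even), where $\sigma$ preserves $\Zpfr$ and $\Zprear$ setwise and hence preserves each lune separately ($\sigma(U^\pm) = U^\pm$). Now symmetric cubillages of $U^+$ and $U^-$ must be constructed individually, and the previous trick of overwriting one side by the $\sigma$-image of the other is unavailable. My plan here is to induct on the number of cubes of $Q'_0$ lying in the lune: using Theorem~\ref{tm:cub-membr} (which is proved in the same section and can be invoked), extract a self-symmetric intermediate membrane $M'$ strictly inside $U^+$ (and, symmetrically, inside $U^-$), which splits $U^+$ into two smaller self-symmetric sub-regions bounded by self-symmetric membranes, and then apply the induction hypothesis to each piece. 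The principal technical obstacle is that Theorem~\ref{tm:cub-membr} as stated concerns cubillages of whole cyclic zonotopes rather than sub-cubillages of lune regions, so the heart of the proof in this case will be either to adapt the membrane-extraction argument to self-symmetric zonotopal regions bounded by self-symmetric membranes, or to embed the lune into a larger symmetric structure where Theorem~\ref{tm:cub-membr} applies directly.
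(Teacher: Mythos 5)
Your treatment of the case $d$ even (so $d+1$ odd) is correct and coincides exactly with the paper's: take any cubillage $R$ of $Z'$ containing $M$ and replace its rear half by the $\sigma$-image of its front half, using that $\sigma$ swaps $\Zpfr$ and $\Zprear$ while fixing the self-symmetric membrane $M$.

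For the case $d$ odd (so $d+1$ even) your proposal is not a proof: you correctly identify the obstruction — $\sigma$ now preserves each lune, so neither can be overwritten by the mirror image of the other, and Theorem~\ref{tm:cub-membr} does not directly apply to a lune because a lune is not a cyclic zonotope — but you only sketch two possible ways around it without carrying either through. The paper resolves this case by an entirely different mechanism. Since $d$ is odd, $r := d-1$ is even and Corollary~\ref{cor:flip-rsepar} (via Theorem~\ref{tm:Gammasym}) gives a flip poset on $\bfQsym_{n,d}$ with unique minimum $\Qst_{n,d}$ and maximum $\Qant_{n,d}$. One runs a sequence of \emph{symmetric} lowering flips from $Q$ down to $\Qst_{n,d}$: each central flip $\frakDant\leadsto\frakDst$ corresponds to sweeping the membrane forward through a single self-symmetric $(d+1)$-cube $C$ (replacing $\Crear$ by $\Cfr$), and each double flip corresponds to sweeping through a pair of distinct symmetric cubes $C$, $C^\ast$. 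The cubes so swept assemble into a symmetric set $Q^-$ that tiles the front lune. Running symmetric raising flips from $Q$ to $\Qant_{n,d}$ similarly yields a symmetric tiling $Q^+$ of the rear lune, and $Q^-\cup Q^+$ is the desired symmetric cubillage. This avoids any recursion on lune regions and instead converts the already-established poset connectivity into a direct construction. If you want to pursue your inductive route you would first have to prove a lune-level analogue of Theorem~\ref{tm:cub-membr}, which is a nontrivial addition; the flip-poset argument is both available at this point in the paper and considerably simpler.
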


\noindent\textbf{Proof of Theorem~\ref{tm:cub-membr}.}
When $d$ is even, the
theorem follows from assertion~(i) in Lemma~\ref{lm:fr-rear}. Now assume that
$d$ is odd, and let $Q$ be a symmetric cubillage on $Z=Z(n,d)$. To obtain the
desired symmetric membrane in $Q$, we construct, step by step, a sequence of
pairs of symmetric membranes $M,M^\ast$ starting with the pair $\Zfr,\Zrear$
(which are symmetric to each other by Lemma~\ref{lm:fr-rear}(ii)).

For a membrane $M$, consider the set $Q(M)$ of cubes of $Q$  between $\Zfr$ and
$M$ (see Sect.~\SSEC{invers}).  We assume that the current membranes $M$ and
$M^\ast$ satisfy $Q(M)\subseteq Q(M^\ast)$.

If $Q(M)=Q(M^\ast)$, then $M=M^\ast$, and we are done. Now assume that
$Q(M^\ast)$ strictly includes $Q(M)$. Among the cubes in $Q(M^\ast)-Q(M)$,
choose a minimal cube $C$ (w.r.t. the natural order $\prec_Q$). Then $\Cfr$ is
entirely contained in $M$. Replacing in $M$ the disc $\Cfr$ by $\Crear$, we
obtain membrane $M'$ of $Q$ with $Q(M')=Q(M)\cup\{C\}$.

Since $Q$ is symmetric, it has the cube $C^\ast$ symmetric to $C$. By the
symmetry of $M$ and $M^\ast$, the side $C^{\ast\rm{rear}}$ (which is symmetric
to $\Cfr$) is entirely contained in $M^\ast$, and $C^\ast$ belongs to
$Q(M^\ast)-Q(M)$. The oddness of $d$ implies that the cubes $C$ and $C^\ast$
are different (for if $C=(X\,|\, T)$, then $C^\ast=((XT)^\ast\,|\, T^\circ)$,
and $T=T^\circ$ is impossible since $d$ is odd and $n$ is even).

Thus, replacing in $M^\ast$ the side $C^{\ast\rm{rear}}$ by  $C^{\ast\rm{fr}}$,
we obtain the membrane ${M'}^\ast$ symmetric to $M'$, and moreover,
$Q(M')\subseteq Q({M'}^\ast)$ is valid. Also the gap between $M'$ and
${M'}^\ast$ becomes smaller. Repeating the procedure, we eventually obtain a
membrane coinciding with its symmetric one, as required. \hfill\qed

 \begin{corollary} \label{cor:zon-sym-cub}
For any $d\le n$ with $n$ even, the zonotope $Z(n,d)$ has a symmetric
cubillage. Therefore, the set $\bfSsym_{n,d-1}$ of symmetric $(d-1)$-separated
collections of size $s_{n,d-1}$ in $2^{[n]}$ is nonempty.
  \end{corollary}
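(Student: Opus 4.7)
The plan is to prove the corollary by a short parity argument that invokes the two main theorems of Section~\SEC{gen-theorems} together with~\refeq{stand-antistand}, rather than constructing a symmetric cubillage from scratch. The cubillage-existence part splits naturally into two cases according to the parity of $d$.

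For $d$ odd, I would cite~\refeq{stand-antistand}(i) directly: the standard cubillage $\Qst_{n,d}$ on $Z(n,d)$ is self-symmetric (equivalently, by Lemma~\ref{lm:fr-rear}(i) applied in dimension $d+1$, since the front side $Z^{\rm fr}$ of $Z(n,d+1)$ is self-symmetric when $d+1$ is even, and it projects to $\Qst_{n,d}$). This already produces the required symmetric cubillage without any further work.

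For $d$ even with $d \ge 2$, the idea is to descend by one dimension and then lift: apply the odd case to $d-1$ to obtain a symmetric cubillage $Q$ on $Z(n,d-1)$, and then invoke Theorem~\ref{tm:membr-cub} to find a symmetric cubillage $Q'$ on $Z(n,d)$ that contains a membrane $M = M_Q$ isomorphic to $Q$. The cubillage $Q'$ is itself symmetric by the conclusion of that theorem, which settles the existence claim.

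The second assertion then follows by combining the bijection~\refeq{cub-dsepar} with the definition of $\bfSsym_{n,d-1}$: the vertex set $V_{Q'}$ of a cubillage on $Z(n,d)$ is a size-maximal $(d-1)$-separated collection, so $|V_{Q'}| = s_{n,d-1}$; since $Q'$ is symmetric, $V_{Q'}$ is $\ast$-symmetric, hence $V_{Q'} \in \bfSsym_{n,d-1}$ and this class is nonempty. Since all the real content lives in Theorem~\ref{tm:membr-cub} (proved independently in this section) and in Lemma~\ref{lm:fr-rear}, the proof of the corollary reduces to bookkeeping and there is no substantial obstacle; the only point to watch is to handle the trivial low-dimensional situation (e.g.\ $d=1$) outside the standard setup $n \ge d > 1$ of Section~\SSEC{zon-cubil} in a sensible way, which can be done either by a direct check or by declaring it vacuous.
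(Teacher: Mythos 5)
Your proof is correct in substance, but it follows a genuinely different route from the paper's, and that difference carries a cost worth noticing. The paper proves the corollary by a uniform downward induction on $d$: the base case is $d=n$, where $Z(n,n)$ is a single cube whose unique cubillage is trivially self-symmetric (since $[n]^\circ=[n]$), and the inductive step uses Theorem~\ref{tm:cub-membr}: a symmetric cubillage on $Z(n,d+1)$ contains a symmetric membrane, whose projection is a symmetric cubillage on $Z(n,d)$. Crucially, Theorem~\ref{tm:cub-membr} is proved self-containedly (from Lemma~\ref{lm:fr-rear} and a direct membrane-pushing argument), so the whole proof of the corollary sits outside the flip machinery of Section~\SEC{symm-r-even}.

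You instead split on the parity of $d$: for $d$ odd you invoke~\refeq{stand-antistand}(i) directly (a perfectly good shortcut the paper's uniform induction does not exploit), and for $d$ even you lift a symmetric cubillage from $Z(n,d-1)$ using Theorem~\ref{tm:membr-cub}. This works, but the proof of Theorem~\ref{tm:membr-cub} in exactly the case you need (lifting from odd $d-1$ to even $d$) routes through Corollary~\ref{cor:flip-rsepar} and hence the symmetric flip-poset theory of Theorem~\ref{tm:Gammasym}. So your argument quietly pulls in a substantially heavier toolkit than the corollary requires. It is not circular --- the nonemptiness of $\bfQsym_{n,d-1}$ underlying Theorem~\ref{tm:Gammasym} is secured by~\refeq{stand-antistand}(i), not by Corollary~\ref{cor:zon-sym-cub} itself --- but the dependency chain is longer than it needs to be, and a reader tracking what is used where should be alerted to it. You also correctly flag the $d=2$ wrinkle: your descent lands at $d-1=1$, which is outside the standing hypothesis $n\ge d>1$ of Section~\SSEC{zon-cubil}, so you would need either an ad hoc check there or (better) to replace that one step by the paper's Theorem~\ref{tm:cub-membr} descent from $Z(n,3)$; the paper's monotone downward induction from $d=n$ avoids the edge case entirely. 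The final deduction of nonemptiness of $\bfSsym_{n,d-1}$ via~\refeq{cub-dsepar} is the same in both treatments and is fine as you state it.
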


This is shown by induction on $d$ (by decreasing $d$). The zonotope $Z(n,n)$
(which is a single cube) is symmetric. Suppose that for $d<n$, the zonotope
$Z=Z(n,d+1)$ has a symmetric cubillage $Q$. By Theorem~\ref{tm:cub-membr}, $Q$
contains a symmetric membrane $M$. Then the projection $\pi(M)$ is a symmetric
cubillage on $Z(n,d)$.
 \medskip

\noindent\textbf{Proof of Theorem~\ref{tm:membr-cub}.}
Take the (abstract)
membrane $M=M_Q$ in $Z'=Z(n,d+1)$. This $M$ is self-symmetric.

If $d+1$ is odd, then in order to construct the desired symmetric cubillage on
$Z'$ containing $M$, we first choose an arbitrary cubillage $R$ on $Z'$
containing $M$. Take the set $R(M)$ of cubes of $R$ between $\Zpfr$ and $M$.
Since $M$ is self-symmetric and $\Zprear$ is symmetric to $\Zpfr$, by
Lemma~\ref{lm:fr-rear}(ii) (with $d+1$ instead of $d$), the cubes symmetric to
those in $R(M)$ should be disposed in the region between $M$ and $\Zprear$, and
moreover, they give a proper subdivision of this region. Hence
$R(M)\cup\{C^\ast\colon C\in R(M)\}$ is a symmetric cubillage containing $M$,
as required.

Now let $d+1$ be even. Then $d$ is odd, and using
Corollary~\ref{cor:flip-rsepar}, we can construct a sequence of central or
double lowering flips in $Q$ so as to reach the standard cubillage $\Qst_{n,d}$
on $Z=Z(n,d)$. Each central flip uses a self-symmetric capsid $\frakD$ with the
anti-standard filling $\frakDst$; this corresponds to a self-symmetric cube $C$
in $Z'$ and the flip $\frakDant\leadsto \frakDst$ determines the flip
$\Crear\leadsto \Cfr$ in $Z'$. In its turn, each double flip uses a pair of
innerly disjoint symmetric capsids $\frakD,\frakD^\ast$, both with the
anti-standard fillings. They correspond to (different) symmetric cubes
$C,C^\ast$, and the flip $(\frakDant,\frakD^{\ast\rm{ant}})\leadsto
(\frakDst,\frakD^{\ast\rm{st}})$ determines the double lowering flip
$(\Crear,C^{\ast\rm{rear}})\leadsto(\Cfr,C^{\ast\rm{fr}})$ in $Z'$.

Thus, the above sequence of flips in $Z$ starting with $Q$ and terminating with
$\Qst_{n,d}$ determines a symmetric set of cubes, denoted as $Q^-$, which
subdivide the region between $\Zpfr$ and $M$. Acting similarly with symmetric
raising (central or double) flips in $Z$ starting with $Q$ and ending with
$\Qant_{n,d}$, we can construct a symmetric set $Q^+$ of cubes filling the
region between $M$ and $\Zprear$. Now $Q^-\cup Q^+$ is the desired symmetric
cubillage in $Z'$ containing $M$, as required. \hfill \qed

%------------------ Sec 9

\section{Concluding Remarks} \label{sec:concl}

Besides the $\ast$-symmetry (given in~\refeq{Xast}), one can deal with the
$\circ$-\emph{symmetry} on $2^{[n]}$, also called the \emph{color exchanging
symmetry}, which sends $X\subseteq [n]$ to $X^\circ:=\{i\in[n]\colon i^\circ\in
X\}$. There are interesting interrelations between both sorts of symmetry. One
of them is that for a $\ast$-symmetric cubillage $Q$ on $Z(n,d)$, each cube
$C=(X\,|\,T)$ is symmetric to the cube $C^\ast$ viewed as
$((XT)^\ast\,|\,T^\circ)$ (see Sect.~\SSEC{sym_cub}); so under the
$\ast$-symmetry on $Q$, the types of cubes obey the $\circ$-symmetry relation.

When $n$ is even, $n=2m$, there is a bijection $\omega$ on $2^{[n]}$ that turns
$\ast$-symmetric sets into $\circ$-symmetric ones. More precisely, for
$X\subseteq [n]$, let
$$
  X_-:=X\cap[m] \qquad\mbox{and}\qquad X_+:=X\cap[m+1..2m].
  $$

For $i\in[2m]$, define
     \begin{equation} \label{eq:ilozenge}
 i^\dsymm:= \left\{
  \begin{array}{rl}
 m-i+1 & \quad \mbox{if $i\le m$,} \\
 2m-i+1 & \quad \mbox{if $m<i\le 2m$,}
   \end{array}
  \right.
   \end{equation}
and accordingly extend this to subsets $X\subseteq[2m]$ by setting
$X^\dsymm:=\{i^\dsymm \colon i\in X\}$. Note that $\circ$ and $\dsymm$ commute:
$(i^\circ)^\dsymm=(i^\dsymm)^\circ$.

Now $X\mapsto Y=\omega(X)$ is defined by
  \begin{equation} \label{eq:X-to-Y}
  Y_-=[m]-(X_-)^\dsymm=([m]-X_-)^\dsymm\qquad \mbox{and} \qquad Y_+=(X_+)^\dsymm.
  \end{equation}

  \begin{lemma} \label{lm:sym_to_sym}
Let $X\subseteq [n]$ and $Y=\omega(X)$. Then $\omega(X^\ast)=Y^\circ$.
  \end{lemma}
  \begin{proof}
Let $i\in[m]$. We examine four cases of $\{i,i^\circ\}$ relative to $X$.
 \smallskip

\noindent\emph{Case 1}: $i\in X\not\ni i^\circ$. Then $i\in X^\ast\not\ni
i^\circ$. Applying~\refeq{X-to-Y} to $X$ and to $X^\ast$, we have
$i^\dsymm\notin \omega(X)$ and $(i^\circ)^\dsymm\notin \omega(X)$, and
similarly $i^\dsymm,(i^\circ)^\dsymm\notin \omega(X^\ast)$.
 \smallskip

\noindent\emph{Case 2}: $i\notin X\ni i^\circ$. Then $i\notin X^\ast\ni
i^\circ$. It follows that $i^\dsymm\in \omega(X)$ and $(i^\circ)^\dsymm\in
\omega(X)$, and similarly $i^\dsymm,(i^\circ)^\dsymm\in \omega(X^\ast)$.
 \smallskip

\noindent\emph{Case 3}: $i,i^\circ\in X$. Then $i,i^\circ\notin X^\ast$. It
follows that $i^\dsymm\notin \omega(X)\ni (i^\circ)^\dsymm$ and that
$i^\dsymm\in \omega(X^\ast)\not\ni (i^\circ)^\dsymm$.
 \smallskip

\noindent\emph{Case 4}: $i,i^\circ\notin X$. Then $i,i^\circ\in X^\ast$,
yielding $i^\dsymm\in \omega(X)\not\ni (i^\circ)^\dsymm$ and $i^\dsymm\notin
\omega(X^\ast)\ni (i^\circ)^\dsymm$.
  \smallskip

In all cases, $\omega(X)$ and $\omega(X^\ast)$ are $\circ$-symmetric within the
pair $\{i^\dsymm,(i^\dsymm)^\circ)\}$, whence the result follows.
 \end{proof}

Thus, $\omega$ maps $\ast$-symmetric collections into $\circ$-symmetric ones.
One can see that the converse holds as well. The next lemma involves separation
relations.

 \begin{lemma} \label{lm:r-separ_preserv}
Let $n=2m$ be even (as before) and $r$ odd. Let $A,B\subseteq[n]$ be
$r$-separated. Then $C:=\omega(A)$ and $D:=\omega(B)$ be $r$-separated as well.
Conversely, if $C,D$ are $r$-separated, then so are $A,B$.
   \end{lemma}
  \begin{proof}
Sets $X,Y\subseteq[n]$ are said to be $k$-\emph{intertwined} if $k$ is the
minimal number such that there are elements $i_1<\cdots<i_k$ of $[n]$ that
alternate in $X-Y$ and $Y-X$; we denote this $k$ as $\iota(X,Y)$.

One can see that $\iota(A_-,B_-)+\iota(A_+,B_+)$ is equal to either
$\iota(A,B)$ or $\iota(A,B)+1$. Also one can see that
  \begin{gather*}
\iota(C_-,D_-)=\iota([m]-A_-^\dsymm,\,[m]-B_-^\dsymm)
      =\iota(A_-^\dsymm,B_-^\dsymm)=\iota(A_-,B_-); \\
  \iota(C_+,D_+)=\iota(A_+^\dsymm,B_+^\dsymm)=\iota(A_+,B_+); \quad\mbox{and} \quad
  \iota(C,D)\le \iota(C_-,D_-)+\iota(C_+,D_+).
  \end{gather*}

This implies that if $\iota(A,B)\le r$ or if
$\iota(A,B)=r+1=\iota(A_-,B_-)+\iota(A_+,B_+)$, then $\iota(C,D)\le r+1$, and
therefore $C,D$ are $r$-separated.

So assume that $i(A,B)=r+1$ and $\iota(A_-,B_-)+\iota(A_+,B_+)=\iota(A,B)+1$.
The latter equality is possible only if both $p:=\max(A_-\triangle B_-)$ and
$q:=\min(A_+\triangle B_+)$ belong to the same set among $A-B$ and $B-A$; let
for definiteness $p,q\in A-B$.

The transformations $A_-\mapsto[m]-A_-$ and $B_-\mapsto [m]-B_-$ swaps the
alternating pieces of $A_--B_-$ and $B_--A_-$. This implies that
$\max(([m]-A_-)\triangle([m]-B_-))$ is equal to $p$ and that $p\in [m]-B_-$.
Then
  $$
  p^\dsymm=\min(([m]-A_-)^\dsymm\triangle([m]-B_-)^\dsymm)=\min(C_-\triangle
  D_-)\in D_--C_-
  $$
(since $p\in[m]-B_-$ implies $p^\dsymm\in([m]-B_-)^\dsymm$).

At the same time, under the transformations $A_+\mapsto A_+^\dsymm$ and
$B_+\mapsto B_+^\dsymm$, the minimal element $q$ of $A_+\triangle B_+$ maps to
the maximal element $q^\dsymm$ of $A_+^\dsymm\triangle B_+^\dsymm=C_+\triangle
D_+$, and this $q^\dsymm$ belongs to $A_+^\dsymm-B_+^\dsymm=C_+-D_+$ (since
$q\in A_+-B_+$).

Thus, $\min(C\triangle D)=p^\dsymm\in D-C$ while $\max(C\triangle
D)=q^\dsymm\in C-D$, implying that $\iota(C,D)$ is even. Since $r$ is odd and
$\iota(C,D)\le i(A,B)+1=r+2$, we can conclude that $\iota(C,D)=r+1$, and
therefore $C,D$ are $r$-separated.

The converse assertion is shown by reversing the above reasonings.
  \end{proof}

The above lemmas imply the following
  \begin{corollary} \label{cor:separ-relat}
For $n$ even and $r$ odd, if $\Sscr$ is a $\ast$-symmetric $r$-separated
collection in $2^{[n]}$, then $\omega(\Sscr):=\{\omega(X)\colon X\in\Sscr\}$ is
a $\circ$-symmetric $r$-separated collection, and vice versa.
  \end{corollary}

This gives a bijection between the max-size $\ast$-symmetric and
$\circ$-symmetric collections in $2^{[n]}$, leading to a one-to-one
correspondence between $\ast$-symmetric and $\circ$-symmetric cubillages on
$Z=Z(n,d=r+1)$ (when both $n,d$ are even).

More precisely, the correspondence $Q\mapsto Q'=:\omega(Q)$ of such cubillages
(where $Q$ is $\ast$-symmetric and $Q'$ is $\circ$-symmetric) is given via the
relation $V_{Q'}=\omega(V_Q)$ on their spectra, the sets of vertices regarded
as collections in $2^{[n]}$. (Here we use the facts that $\omega(V_Q)$ is a
max-size $(d-1)$-separated collection and that each max-size $(d-1)$-separated
collection in $2^{[n]}$ forms a spectrum of a cubillage on $Z$.)

One can check that for vertices $A,B$ of $Q$, $|A\triangle B|=1$ implies
$|\omega(A)\triangle \omega(B)|=1$, and vice versa. Equivalently, vertices
$A,B$ of $Q$ are connected by edge if and only if so are the vertices
$\omega(A)$ and $\omega(B)$ of $Q'$. This is extended to the faces of $Q$ and
$Q'$, namely, if $F=(X\,|\,T)$ is a face of $Q$, then $(\omega(X)\,|\,
T^\dsymm)$ is a face of $Q'$, denoted as $\omega(F)$, and conversely, for a
face $(X'\,|\,T')$ of $Q'$, $(\omega^{-1}(X')\,|\,T'^{\,\dsymm})$ is a face of
$Q$.

Therefore, the $\ast$- and $\circ$-symmetric cubillages are, in fact,
represented by the same complex (by ignoring the directions of edges). The
correspondence $Q\mapsto Q'$ has a nice visualization when $d=2$; namely, $Q'$
is mirror-reflected to $Q$ w.r.t. the SW-to-NE line (at angle of $45^\circ$)
through the center of $Z(n,2)$.

In general, the bijection on the cubes is extended, in a natural way, to the
capsids of $Q$ and $Q'$. However, the fillings of corresponding capsids may be
different: if a capsid $\frakD$ of $Q$ has the standard filling, say, then the
capsid $\omega(\frakD)$ of $Q'$ may have any of the two possible fillings.

One can see that for a cube $C=(X\,|\,T)$ in $Z$, its $\circ$-symmetric cube
$C^\circ$ is viewed as $(X^\circ\,|\, T^\circ)$. This easily implies that
  \begin{numitem1} \label{eq:capsids_relat}
for a capsid $\frakD$ in $Z$, both $\frakD$ and its $\circ$-symmetric capsid
$\frakD^\circ$ have fillings of the same type: both are either standard or
anti-standard.
  \end{numitem1}

When all capsids of $Q'$ have standard (resp. anti-standard) fillings, we just
deal with the standard (resp. anti-standard) cubillages on $Z$, which give
important special cases of $\circ$-symmetric cubillages.

From a viewpoint of capsids, the difference between the $\ast$- and
$\circ$-symmetric settings is impressive for $d=2$ where, in a $\ast$-symmetric
cubillage, any pair of symmetric capsids $\frakD$ and $\frakD^\ast$ have
different fillings (whereas the fillings of their symmetric counterparts
$\omega(\frakD)$ and $\omega(\frakD^\ast)$ are similar).

    %--------------------

\appendix

%----------------------- Appendix A

\section{Appendix 1: Symmetric $r$-separated collections and
$(r+1)$-dimensional cubillages when $r$ is odd.}  \label{sec:symm-r-odd}

In this additional section we assume, as before, that the number $n$ of colors
is even, and are going to explore the flip structure in the set $\bfSsym_{n,r}$
of size-maximal (viz. representable) symmetric $r$-separated collections in
$2^{[n]}$ when $r$ is \emph{odd}. An attempt to show the connectedness of this
structure looks more intricate than in the case of $r$ even considered in
Sect.~\SEC{symm-r-even}. Again, we essentially attract a machinery of
cubillages.

Consider a collection $\Sscr\in\bfSsym_{n,r}$ and the corresponding symmetric
cubillage $Q$ with $V_Q=\Sscr$ on $Z=Z(n,d)$, where $d:=r+1$ is even. We may
assume that $d\ne n$ (since $\bfSsym_{n,n-1}=\{2^{[n]}\}$). Usual flips in
cubillages are performed by using capsids (see Sect.~\SSEC{memb-capsid}), and
we are going to apply them to construct symmetric transformations.

So let $\frakD=(X\,|\, T)$ be a capsid with $T=(p_1<\cdots<p_{d+1})$ in $Q$ and
assume for definiteness that it has the standard filling $\frakDst$. This
filling is formed by $\lceil d/2\rceil$ lower cubes $F_i$ (with $d-i$ odd) and
$\lfloor d/2\rfloor$ upper cubes $G_j$ (with $d-j$ even), namely:
  $$
  F_i=(X\,|\, T-p_i) \;\; \mbox{for $i=1,3,\ldots, d+1$},\;\; \mbox{and}\;\;
    G_j=(Xp_j\,|\, T-p_j)\;\;\mbox{for $j=2,4,\ldots,d$}
    $$
(cf.~\refeq{capsid}(i)). Then the capsid $\frakD^\ast$ symmetric to $\Dscr$
(existing since $Q$ is symmetric) has the anti-standard filling formed by the
$\lceil d/2\rceil$ upper cubes
 $$
F_i^\ast=((XT)^\ast p_i^\circ\,|\, T^\circ-p_i^\circ)=:
        G'_{d+2-i}=(X'p_i^\circ\,|\, T^\circ-p_i^\circ), \;\;\;   i=1,3,\ldots,d+1,
   $$
(relabeling $(XT)^\ast$ as $X'$) and $\lfloor d/2\rfloor$ lower cubes
 $$
G_j^\ast=((XT)^\ast\,|\, T^\circ-p_j^\circ))=: F'_{d+2-j}=(X'\,|\, T^\circ-p_j^\circ), \;\;\;
         j=2,4,\ldots,d.
   $$

It follows that $\frakD\ne\frakD^\ast$. Moreover, since  the Claim in the proof
of Lemma~\ref{lm:C-Cast} is valid for any $n$ and $d<n$, only two cases are
possible:
  \begin{numitem1} \label{eq:a-b}
$\frakD$ and $\frakD^\ast$ share either (a) no cube, or (b) exactly one cube.
  \end{numitem1}

In case~(a), we can make the \emph{double (symmetric) flip} in $Q$ using
$\frakD,\frakD^\ast$, by replacing the filling $\frakDst$ by $\frakDant$, and
$\frakD^{\ast \rm{ant}}$ by $\frakD^{\ast \rm{st}}$. This results in another
symmetric cubillage on $Z$. (Note that, in contrast to flips in
Sect.~\SEC{symm-r-even}, the double flip is now viewed as ``undirected'' since
it is raising in $\frakD$ but lowering in $\frakD^\ast$, as it was demonstrated
for the special case $d=r+1=2$ described in Sect.~\SEC{scoll}.)

Now suppose that we are in case~(b) of~\refeq{a-b}. Then $\frakD\cap
\frakD^\ast$ is a self-symmetric cube $C=(\hat X\,|\,\hat T)$ whose type (color
set) $\hat T$ is symmetric and occurs in both $T$ and $T^\circ$. Then $\hat
T=T-p_k=T^\circ-p_k^\circ$ for some $k\in[d+1]$, and either
  \medskip

(i) $d-k$ is odd, $C=F_k=G'_{d+2-k}$ and $X=X'p_k^\circ$, or
  \smallskip

(ii) $d-k$ is even, $C=G_k=F'_{d+2-k}$ and $X'=Xp_k$.
 \smallskip

We call $X'$ in case~(i) and~$X$ in case~(ii) the \emph{bottom} of $\frakD\cup
\frakD^\ast$, denoted by $\tilde X$, and denote $T\cup T^\circ$ by $\tilde T$.
Then $\tilde T$ is symmetric and $|\tilde T|=d+2$ (which is even).

The important special case arises when the union of $\frakD$, $\frakD^\ast$ and
some extra cubes of $Q$ forms a subzonogon $\frakB$  with the bottom $\tilde X$
and the color set $\tilde T$. In other words, $\frakB$ is isomorphic to
$Z(\{\xi_\alpha\colon \alpha \in\tilde T\})\simeq Z(d+2,d)$. We call $\frakB$ a
\emph{barrel} of $Q$ (or a barrel in $Z$ compatible with $Q$). Also we refer to
the set of cubes of $Q$ occurring in $\frakB$ as the \emph{filling} of
$\frakB$, and the cube $C$ as its \emph{central cube}.
\medskip

\noindent\textbf{Remark 3.} ~Since $d$ and $d+2$ are even, the filling of
$\frakB$ forms a symmetric subcubillage of $Q$. By Theorem~\ref{tm:membr-cub},
any symmetric cubillage on $Z(d+2,d)$ is the projection of some membrane in
some symmetric cubillage $Q'$ on $Z(d+2,d+1)$. The latter zonotope is, in fact,
a capsid; it admits only two cubillages: the standard and anti-standard ones,
say, $Q_1,Q_2$ (see Sect.~\SSEC{memb-capsid}). Both of them are self-symmetric
(as being the projections of the front and rear sides of the cube $Z(d+2,d+2)$;
cf. Lemma~\ref{lm:fr-rear}). By Theorem~\ref{tm:cub-membr}, each of $Q_1,Q_2$
has a symmetric membrane, and one can conclude from~\refeq{order_capsid} (with
$d+2$ instead of $d+1$) that each $Q_i$ has exactly one symmetric membrane,
namely, the one dividing the sequence in (i) or (ii) of~\refeq{order_capsid}
half-to-half. Hence the barrel $\frakB$ admits two symmetric fillings, one
coming from a membrane of $Q_1$, and another from $Q_2$.
  \medskip

\noindent\textbf{Example.} Let $d=2$ and $n=4$. Then $[n]=\{1,2,3,4\}$,
$1^\circ=4$ and $2^\circ=3$. The zonotope (zonogon) $Z(4,2)$ is the simplest
barrel $\frakB$. It has two symmetric fillings (tilings) $B_1$ and $B_2$. Here
$B_1$ is the projection of the (unique) symmetric membrane $M_1$ of the
standard filling $\frakDst$ of the capsid (zonotope) $\frakD=Z(4,3)$, while
$B_2$ is the projection of the symmetric membrane $M_2$ in  $\frakDant$. The
cubillage $\frakDst$ consists of the cubes $F_4\prec G_3\prec F_2\prec G_1$,
having types $123,\,124,\,134,\,234$, respectively, and $M_1$ divides these
cubes half-to-half. This gives four facets of $M_1$, namely: $H_1:=F_4\cap
G_1$, $H_2:=G_3\cap F_2$, $H_3:=F_4\cap F_2$, $H_4:=G_3\cap G_1$. These facets
have types 23,\,14,\,13,\,24, respectively, and their projections generate four
cubes (rhombuses) in $B_1$; the first two rhombuses have symmetric types 23 and
14 and are ordered as $\pi(H_1)\prec \pi(H_2)$ (by the natural partial order in
$B_1$). Also $M_1$ has two more facets, contained in the boundary of $\frakD$
and having types 12 and 34. Altogether, we obtain 4+2=6 rhombuses in $B_1$. On
the other hand, one can check that the tiling $B_2$ (generated by $M_2$ in
$\frakDant$ with the cubes $F_1\prec G_2\prec F_3\prec G_4$) is formed by
another six-tuple of rhombuses $\pi(H'_i)$, in which the rhombuses
$\pi(H'_1),\pi(H'_2)$ having symmetric types 23 and 14 (respectively) are
ordered as $\pi(H'_1)\succ \pi(H'_2)$. Such barrel fillings are illustrated in
Fig.~\ref{fig:n=4}.
  \medskip

\noindent\textbf{Definition.} We call the replacement of one symmetric filling
of a barrel $\frakB$ of $Q$ by the other one a \emph{big} (or \emph{barrel})
\emph{flip} in $Q$ using $\frakB$, borrowing terminology from
Sect.~\SEC{scoll}.
 \medskip

Let $\Gammasym_{n,r}$ denote the undirected graph whose vertex set is
$\bfSsym_{n,r}$ and whose edges are related to the pairs of  symmetric
cubillages on $Z(n,d=r+1)$ where one is obtained from the other by either a
double flip or a big flip.

One can see that $\Gammasym_{n,1}$ is nothing else than the graph $\Gammasym_n$
(up to discarding the orientation of edges) constructed in Sect.~\SEC{scoll}
(see Theorem~\ref{tm:all-s-flips}). The big flips in it are just those as
illustrated in Fig.~\ref{fig:n=4} (cf. Example above).

We study the connected components of $\Gammasym_{n,r}$, trying to show that the
entire graph is connected, as follows. Let $\bfQsym_{n',d'}$ denote the set of
symmetric cubillages, and $\bfMsym_{n',d'}$ the set of (abstract) symmetric
membranes in $Z(n',d')$.

We know that the cubillages $Q\in\bfQsym_{n,d}$  one-to-one correspond to the
membranes $M\in\bfMsym_{n,d+1}$ (this bijection is given by $M\mapsto Q=\pi(M)$
and $Q\mapsto M=M_Q$); so we may concentrate on handling such membranes and
related flips on them. For $M\in\bfMsym_{n,d+1}$, let $\Kscr(M)$ denote the set
of symmetric cubillages on $Z'=Z(n,d+1)$ that contain $M$, and for
$K\in\bfQsym_{n,d+1}$, let $\Mscr(K)$ denote the set of symmetric membranes
contained in $K$. We say that $K$ and $M$ are \emph{agreeable} if
$K\in\Kscr(M)$ (and $M\in\Mscr(K)$). We introduce the following binary relation
on $\bfMsym_{n,d+1}$.
  \medskip

\noindent\textbf{Definition.} Two membranes $M,M'\in\bfMsym_{n,d+1}$ are called
\emph{equivalent} if there is a sequence $M=M_0,M_1,\ldots, M_N=M'$ of
membranes in $\bfMsym_{n,d+1}$ and a sequence $K_1,\ldots,K_N$ of cubillages in
$\bfQsym_{n,d+1}$ such that for each $i=1,\ldots,N$, both ~$M_{i-1},M_i$ belong
to $\Mscr(K_i)$ (and $K_{j},K_{j+1}\in\Kscr(M_j)$, $1\le j\le N-1$).
  \medskip

The equivalence relation is transitive and a maximal set of equivalent
membranes is called an \emph{orbit}. So $\bfMsym_{n,d+1}$ is partitioned into a
number of orbits.

 \begin{lemma} \label{lm:orbit}
Let $M,M'$ belong to the same orbit. Then $\pi(M)$ and $\pi(M')$ are connected
by a series of symmetric double flips.
  \end{lemma}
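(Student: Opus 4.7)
By transitivity of the equivalence relation, it suffices to treat the base case $N=1$: for two symmetric membranes $M,M'\in\Mscr(K)$ of the same symmetric cubillage $K$ on $Z'=Z(n,d+1)$ (recall $d$ is even, so $d+1$ is odd), I must exhibit a sequence of symmetric double flips linking $\pi(M)$ with $\pi(M')$. Two structural facts about the $\ast$-action on $(K,\prec_K)$ drive the argument. First, by Lemma~\ref{lm:fr-rear}(ii) the involution $\sigma$ swaps $\Zpfr$ with $\Zprear$; the same local reasoning yields $\sigma(\Cfr)=\Castrear$ and $\sigma(\Crear)=\Castfr$ for every cube $C\in K$, so an immediate precedence $\Crear\cap D^{\rm fr}\neq\emptyset$ is carried to $C^{\ast\,\rm fr}\cap D^{\ast\,\rm rear}\neq\emptyset$, giving $C\prec_K D \iff D^\ast\prec_K C^\ast$. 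Second, for every self-symmetric $M\in\Mscr(K)$ one has $Q(M)^\ast = K\setminus Q(M)$, since $\sigma$ sends the region between $\Zpfr$ and $M$ to the region between $\sigma(\Zpfr)=\Zprear$ and $\sigma(M)=M$.

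The engine of the proof is an elementary symmetric swap. Put $A:=Q(M)\setminus Q(M')$ and $B:=Q(M')\setminus Q(M)$; the displayed identity applied to both $M$ and $M'$ yields $B=A^\ast$. Pick any $C$ that is $\prec_K$-maximal in $A$. I claim that $C$ and $C^\ast$ are $\prec_K$-incomparable: if $C\prec_K C^\ast$, then $C$ is a predecessor of $C^\ast\in Q(M')$, forcing $C\in Q(M')$ by the ideal property and contradicting $C\in A$; the reverse relation $C^\ast\prec_K C$ is excluded analogously using ideal closure of $Q(M)$. By the same two ideal arguments, $C$ is in fact $\prec_K$-maximal in all of $Q(M)$ and $C^\ast$ is $\prec_K$-minimal in $K\setminus Q(M)$, which are precisely the conditions ensuring that $Q(M_1):=(Q(M)\setminus\{C\})\cup\{C^\ast\}$ is again a $\prec_K$-ideal. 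Hence $M_1$ is a well-defined membrane; the identity $Q(M_1)^\ast=K\setminus Q(M_1)$ is automatic, so $M_1$ is symmetric, and $|Q(M_1)\triangle Q(M')|=|Q(M)\triangle Q(M')|-2$.

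Finally, the projected step $\pi(M)\leadsto\pi(M_1)$ is a symmetric double flip rather than a barrel flip. The capsids $\frakD:=\pi(C)$ and $\frakD^\ast:=\pi(C^\ast)$ share a cube of $\pi(M)$ only via a common $d$-facet of $C$ and $C^\ast$ in $K$, and the presence of such a facet is equivalent to immediate $\prec_K$-neighborliness of $C$ and $C^\ast$ --- which is ruled out by the incomparability just established. Thus case~(a) of~\refeq{a-b} applies, and the swap replaces the anti-standard filling of $\frakD$ by the standard one while replacing the standard filling of $\frakD^\ast$ by the anti-standard one (or vice versa): exactly a symmetric double flip. Iterating this swap until $A$ is exhausted produces the desired sequence of symmetric double flips from $\pi(M)$ to $\pi(M')$. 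The main concern --- that case~(b) (a barrel) might arise at some intermediate step and force a big flip --- is dispatched uniformly by the incomparability argument.
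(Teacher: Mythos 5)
Your proof is correct and follows the same route as the paper's: reduce to two symmetric membranes of a common symmetric cubillage $K$, use the fact that $\sigma$ swaps $\Zpfr$ with $\Zprear$ to get $K(M')\setminus K(M)=(K(M)\setminus K(M'))^\ast$, and then swap one symmetric pair of cubes at a time in a $\prec_K$-compatible order, each swap projecting to a symmetric double flip. Where you add value to the paper's terse sketch is in making two points explicit: that picking $C$ maximal in $A$ automatically makes $C$ maximal in all of $Q(M)$ and $C^\ast$ minimal in $K\setminus Q(M)$ (so each intermediate ideal is a bona fide symmetric membrane), and that $C$, $C^\ast$ are $\prec_K$-incomparable, so case~(b) of~\refeq{a-b} (a shared self-symmetric cube, hence a barrel) cannot occur and no big flip is forced.
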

  \begin{proof}
It suffices to assume that $M,M'$ belong to the same set $\Mscr(K)$,
$K\in\bfQsym_{n,d+1}$. As in Sect.~\SSEC{invers}, we associate with $M$ the set
$K(M)$ of cubes of $K$ lying between $\Zpfr$ and $M$, and similarly for $M'$.
From the symmetry of $M,M'$ it follows that if $C\in K(M')-K(M)$, then
$C^\ast\in K(M)-K(M')$, and if $\Cfr\subset M$, then $\Castrear\subset M$.
Then, using the natural order on the cubes of $K$, one can find a sequence of
$k:=|K(M')-K(M)|$ cubes $C_1,\ldots,C_k$ and a sequence of $k+1$ membranes
$M=M_0,M_1,\ldots,M_k=M'$ in $\Mscr(K)$ such that for each $i$,
  $$
  K(M_i)-K(M_{i-1})=\{C_i\}\quad\mbox{and} \quad K(M_{i-1})-K(M_i)=\{C^\ast_i\}.
  $$
Then $M_i$ is obtained from $M_{i-1}$ by a symmetric cubic flip using
$C_i,C^\ast_i$, namely, by replacing the front side (disc) $\Cfr_i$ by the rear
side $\Crear_i$, and simultaneously, by replacing $\Castrear_i$ by $\Castfr_i$.
Therefore, the cubillage $Q_i:=\pi(M_i)$ on $Z$ is obtained from
$Q_{i-1}:=\pi(M_{i-1})$ by a symmetric double flip using the capsids formed by
the projections by $\pi$ of the corresponding sides of $C_i$ and $C^\ast_i$.
\end{proof}

(In fact, the orbits of $\bfMsym_{n,d+1}$ are analogous to blocks in
Sect.~\SEC{scoll}, where a block is the set of rhombus tilings agreeable with a
fixed permutation on $[m]$.)

Thus, we obtain the connectedness within each orbit, and now we come to the
problem of connecting different orbits. We try to do this by attracting special
membranes and capsids and making ``barrel flips'' in the projection of such
membranes.

More precisely, suppose that some cubillage $K\in\bfQsym_{n,d+1}$ contains a
self-symmetric capsid $\frakD$, and let $M$ be a membrane in $\Mscr(K)$. By the
symmetry of both $M$ and $\frakD$, the membrane $M$ must split $\frakD$ into
two symmetric halves (each containing $(d+2)/2$ cubes). Let $Q=\pi(M)$; then
$Q$ contains $\frac14(d+2)^2$ cubes coming from the facets in the interior $I$
of $M\cap\frakD$ (since any two cubes in a filling of $\frakD$ share a facet).

Assume, in addition, that besides the facets lying in $I$, $M$ contains a set
$J$ of facets of the \emph{boundary} of $\frakD$ so that the following property
holds: $M$ goes through the whole rim $\frakDfr\cap\frakDrear$ of $\frakD$; we
call such an $M$ \emph{perfect} w.r.t. $\frakD$. Then $I\cup J$ must have at
least $\binom{d+2}{d}$ facets, and $\pi(I\cup J)$ is nothing else than a barrel
$\frakB$ in $Q$. So we can make the big flip in $Q$ using $\frakB$ (as
described above). The resulting symmetric cubillage $Q'$ coincides with $Q$
outside $\frakB$, and therefore, the updated membrane $M'=M_Q$ coincides with
$M$ outside $\frakD$. Moreover, $M'$ must be agreeable with the cubillage $K'$
obtained from $K$ by the flip using $\frakD$.

Based on the above observations, we can argue as follows. Define $\bfDsym$ to
be the set of all (abstract) self-symmetric capsids $\frakD=(X\,|\,T)$ in
$Z'=Z(n,d+1)$ (i.e., $X,T\subset[n]$, $X\cap T=\emptyset$, $|T|=d+2$,
$X=(XT)^\ast$ and $T=T^\circ$). Also we associate with each orbit $\Oscr$ in
$\bfMsym_{n,d+1}$ the set $\Kscr(\Oscr):=\cup(\Kscr(M)\colon M\in \Oscr)$,
called the \emph{train} of $\Oscr$, and finish with the following conjecture.

  \begin{itemize}
\item[(C1):] Let $n,d$ be even. Then:
 \begin{itemize}
 \item[(i)] for each ``central'' capsid $\frakD\in\bfDsym$, there exists a cubillage
$K\in\bfQsym_{n,d+1}$ containing $\frakDant$ and a membrane $M\in \Mscr(K)$
that is perfect w.r.t. $\frakD$; and
 \item[(ii)] for each orbit $\Oscr$ in $\bfMsym_{n,d+1}$, the train $\Kscr(\Oscr)$
contains  either the standard cubillage on $Z'$, or a cubillage $K$ for which
there are $\frakD\in\bfDsym$ and $M\in\Mscr(K)$ such that $K$ contains
$\frakDant$ and $M$ is perfect w.r.t. $\frakD$.
  \end{itemize}
 \end{itemize}

One can realize that (C1) gives rise to a method that, starting with an
arbitrary membrane in $\bfMsym_{n,d+1}$ along with a cubillage agreeable with
$M$, updates, step by step, current membranes and cubillages (by making double
(cubic) flips on membranes preserving current cubillages or lowering central
(capsid) flips properly updating both the current cubillage and membrane) so as
to eventually reach the orbit whose train contains the standard cubillage on
$Z'$. As a consequence, the validity of~(C1) would provide the desired
property: for $n,d$ even, any two symmetric cubillages on $Z(n,d)$ could be
connected by a series of symmetric double or barrel flips, yielding the
connectedness of $\bfSsym_{n,d-1}$ by symmetric flips.

% --------------------------  Appendix B

\section{Appendix 2: Symmetric $r$-separation in $2^{[n]}$ when $n$ is odd.}
\label{sec:n-odd}

Earlier we have described flip structures on $\bfSsym_{n,r}$ in $2^{[n]}$ when
$n$ is even. In this additional section we consider $\bfSsym_{n,r}$ when $n$ is
odd. Note that if, in addition, $r$ is odd, then this class may be empty; we
have seen this in Sect~\SSEC{sw-n-odd} for $r=1$, and suspect that a similar
behavior takes place for any odd $r$. (Recall that $\bfSsym_{n,r}$ consists of
those size-maximal $r$-separated collections $\Sscr$ in $2^{[n]}$ (i.e.,
satisfying $|\Sscr|=s_{n,r}$; see Sect.~\SEC{intr}) that are symmetric.)

In what follows we assume that $r$ is even (while $n$ is odd). First of all we
have to explain that in this case the set $\bfSsym_{n,r}$ is nonempty.
Equivalently, for $d:=r+1$, the set $\bfQsym_{n, d}$ of symmetric cubillages on
$Z(n,d)$ is nonempty (since $Q\mapsto V_Q$ gives a bijection between $\bfQ_{n,
d}$ and  $\bfS_{n,r}$). This is stated in Corollary~\ref{cor:sym-exist} below
and can be shown by using certain operations for cubillages on $Z(n,d)$ and
$Z(n-1,d)$, as follows.

It is convenient to assume that the set $\Xi$ of generating vectors of the
zonotope $Z=Z(\Xi)\simeq Z(n,d=r+1)$ is given in a symmetrized form. Let
$n=2m+1$. We relabel the colors in $[n]$ as $-m,\ldots,-1,0,1,\ldots, m$; then
for each $i$, the symmetric color $i^\circ$ is $-i$. When the generating
vectors are given as in~\refeq{symm_cyc_gen}, $\xi_0$ turns into the first unit
base vector $(1,0,\ldots,0)$.

Let $Z'\simeq Z(n-1,d)$ be the zonotope generated by $\Xi-\{\xi_0\}$. Consider
a symmetric cubillage $Q'$ on $Z'$ (existing since
$\bfSsym_{n-1,r}\ne\emptyset$, by Corollary~\ref{cor:zon-sym-cub}).
  \medskip

\noindent\textbf{Definitions.} Let $\pi^0$ denote the projection of $\Rset^d$
to $\Rset^{d-1}$ given by $x=(x(1),\ldots,x(d))\mapsto (x(2),\ldots,x(d))$
(where the coordinates of $\Rset^{d-1}$ are labeled $2,\ldots,d$). A
$(d-1)$-dimensional subcomplex $M$ of $Q'$ is called a \emph{0-membrane} if
$\pi^0$ homeomorphically maps $M$ (regarded as a subset of $\Rset^d$) onto
$Z^0:=\pi^0(Z')$. Such an $M$ subdivides $Z'$ into two closed regions
$\Zplow(M)$ and $\Zpup(M)$ formed by the points below and above $M$ (in the
direction of $\xi_0$), respectively; so $\Zplow(M)\cap\Zpup(M)=M$. Accordingly,
$\Qplow(M)$ and $\Qpup(M)$ denote the subcubillages of $Q'$ occurring in
$\Zplow(M)$ and $\Zpup(M)$, respectively.
  \medskip

There is a nice correspondence between cubillages and 0-membranes. It involves
two operations. The \emph{0-expansion operation} is applied to a pair
consisting of a cubillage on $Z'$ and a 0-membrane $M$ in $Q'$ and acts as
follows:
  \begin{itemize}
\item[(EXP):]
Move the set (subcubillage) $\Qpup(M)$ upward by $\xi_0$, keeping $\Qplow(M)$,
and fill the gap between $\Qplow(M)$ and $\Qpup(M)+\xi_0$ by cubes, each being
the Minkowsky sum of $F$ and $[0,\xi_0]$, where $F$ runs over the set of facets
in $M$.
  \end{itemize}

As a result, we obtain a cubillage on $Z=Z(n,d)$, called the \emph{0-expansion}
of $Q'$ using $M$ and denoted as $Q(Q',M)$.

Conversely, let $Q$ be a cubillage on $Z$ and let  $\Pi_0$ be the set of cubes
$C=(X\,|\,T)$ whose type $T$ contains color 0; this $\Pi_0$ is called the
\emph{0-pie} in $Q$ (adapting terminology in~\cite{DKK3}). The
\emph{0-contraction operation} applied to $Q$ acts as follows:
  \begin{itemize}
\item[(CON):]
Shrink each cube $C=(X\,|\,T)\in\Pi_0$ to its ``lower'' facet $F=(X\,|\,
T-\{0\})$, and for each cube $\tilde C=(\tilde X\,|\, \tilde T)$ of $Q$ whose
bottom $\tilde X$ contains color 0, move $\tilde C$ by $-\xi_0$, forming the
cube $(\tilde X-\{0\}\,|\, \tilde T)$ (preserving the remaining cubes of $Q$).
   \end{itemize}

One shows that the resulting set of cubes forms a cubillage $Q'$ on $Z'$, and
the set of facets obtained by shrinking the cubes of $\Pi_0$ forms a 0-membrane
$M$ in $Q'$; we call $(Q',M)$ the \emph{0-contraction} of $Q$. The 0-expansion
operation applied to $(Q',M)$ returns $Q$. This leads to the following relation
(cf.~\cite{DKK3}):
  \begin{numitem1} \label{eq:contr-expan}
the correspondence $(Q',M)\mapsto Q(Q',M)$ gives a bijection between the set of
pairs $(Q',M)$, where $Q'$ is a cubillage on $Z(n-1,d)$  and $M$ is a
0-membrane in $Q'$, and the set of cubillages on $Z(n,d)$.
  \end{numitem1}

Returning to symmetric settings as before, one can realize that (EXP) applied
to $Q'\in\bfQsym_{n-1,d}$ and a symmetric 0-membrane $M$ in $Q'$ produces a
symmetric cubillage on $Z(n,d)$, and conversely, (CON) applied to
$Q\in\bfQsym_{n,d}$ produces a symmetric cubillage on $Z(n-1,d)$ and a
symmetric 0-membrane in $Q'$. This yields a symmetric counterpart
of~\refeq{contr-expan}, namely:

  \begin{numitem1} \label{eq:sym-contr-expan}
the correspondence $(Q',M)\mapsto Q(Q',M)$ gives a bijection between the set of
pairs $(Q'\in \bfQsym_{n-1,d},M)$, where $M$ is a symmetric 0-membrane in $Q'$,
and the set $\bfQsym_{n,d}$.
  \end{numitem1}

By Corollary~\ref{cor:zon-sym-cub}, the set $\bfQsym_{n-1,d}$ is nonempty. A
similar fact for $(n,d)$ is provided by~\refeq{sym-contr-expan} and the
following

 \begin{lemma} \label{lm:sym-0-membr}
Let $n,d$ be odd and let $Q'$ be a symmetric cubillage on $Z'=Z(n-1,d)$. Then
$Q'$ contains a symmetric 0-membrane.
  \end{lemma}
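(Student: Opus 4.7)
The plan is to mimic closely the proof of Theorem~\ref{tm:cub-membr}, but with \emph{0-membranes} playing the role of ordinary membranes and the direction $\xi_0$ (the first coordinate) playing the role of the last coordinate used for the usual projection. Concretely, first I would observe that the geometric realization $\sigma$ of the involution $\ast$ acts on $\Rset^d$ by $\sigma(x)=2\zeta-\mu(x)$, so on the first coordinate it sends $x(1)$ to $2\zeta_{1}-x(1)$; in other words, $\sigma$ is orientation-reversing in the $\xi_0$-direction. Consequently $\sigma$ sends the 0-front side $Z'^{\,0,\rm fr}$ of $Z'$ to the 0-rear side $Z'^{\,0,\rm rear}$, and more generally it permutes the 0-membranes of $Q'$ (since the projection $\pi^0$ is intertwined with $\sigma$ by an affine involution on $Z^0$).

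Next I would recall (or quickly re-derive as in Sect.~\SSEC{invers}) the analog of~\refeq{s-lattice} for 0-membranes: the cubes of $Q'$ carry a natural ``0-order'' $\prec^0$ (with $C\prec^0 C'$ when the 0-rear facet of $C$ is shared with the 0-front facet of $C'$), and the set of 0-membranes of $Q'$ is a distributive lattice isomorphic to the ideal lattice of $\prec^0$, with minimum $Z'^{\,0,\rm fr}$ and maximum $Z'^{\,0,\rm rear}$. To each 0-membrane $M$ we associate $\Qplow(M)$, the set of cubes below $M$.

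The iterative construction then goes as in Theorem~\ref{tm:cub-membr}. Maintain a pair of 0-membranes $(M,M^\ast)$ with $\Qplow(M)\subseteq\Qplow(M^\ast)$, starting from $(Z'^{\,0,\rm fr},\,Z'^{\,0,\rm rear})$, which are symmetric to each other by the opening observation. If $\Qplow(M)=\Qplow(M^\ast)$, then $M=M^\ast$ is the desired symmetric 0-membrane. Otherwise pick a $\prec^0$-minimal cube $C$ in $\Qplow(M^\ast)-\Qplow(M)$. By minimality the entire 0-front side of $C$ lies in $M$, so replacing it by the 0-rear side yields a new 0-membrane $M'$ with $\Qplow(M')=\Qplow(M)\cup\{C\}$; dually, $C^\ast$ is $\prec^0$-maximal in $\Qplow(M^\ast)-\Qplow(M)$, its 0-rear side lies in $M^\ast$, and replacing it by the 0-front side produces $M'^{\ast}$, which remains symmetric to $M'$.

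The one point where the argument could fail is if $C$ coincided with $C^\ast$, which would force an inconsistent simultaneous flip; the hard part, and the reason this lemma uses the hypothesis that both $n$ and $d$ are odd, is precisely to rule this out. If $C=(X\,|\,T)$, then $C=C^\ast$ requires $T=T^\circ$. But $T\subseteq [n]-\{0\}$, and in this color set (relabeled as $\{\pm 1,\dots,\pm m\}$) no element is self-symmetric, so every $\ast$-symmetric subset has even cardinality. Since $|T|=d$ is odd, we conclude $T\ne T^\circ$, and hence $C\ne C^\ast$. This parity argument is the crucial difference from the even-$d$ setting and guarantees that the paired flips are well-defined. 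Each iteration strictly decreases $|\Qplow(M^\ast)-\Qplow(M)|$, so the process terminates at a symmetric 0-membrane, completing the proof.
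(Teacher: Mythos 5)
Your proposal is correct and follows essentially the same iterative ``squeezing'' argument as the paper: start from the symmetric pair of boundary 0-membranes, at each step flip a $\prec^0$-extremal cube $C$ in the gap on one membrane while simultaneously flipping $C^\ast$ on the other, and use the parity of $d$ (together with the absence of the self-symmetric color $0$ in $Z'$) to guarantee $C\ne C^\ast$; the only differences are cosmetic conventions (you pick a minimal cube and grow $\Qplow(M)$, the paper picks a maximal one and shrinks, and you argue the symmetry of the two boundary 0-membranes directly from $\sigma(x)(1)=2\zeta(1)-x(1)$ rather than indirectly via Lemma~\ref{lm:fr-rear}).
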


  \begin{proof}
Such a membrane $M$ is constructed by a method similar to that in the proof of
Theorem~\ref{tm:cub-membr}.

More precisely, let $\Zpup$ ($\Zplow$) denote the upper (resp. lower) side of
the boundary of $Z'$ (which is formed by the points $x\in Z'$ such that there
is no $y\in Z'$ with $\pi^0(y)=\pi^0(x)$ and $y(1)>x(1)$ (resp. $y(1)<x(1)$)).
Both $\Zpup$ and $\Zplow$ are 0-membranes in $Q'$, and moreover, they are
symmetric to each other (to see the latter, one can use the fact that $\Zpfr$
and $\Zprear$ are symmetric to each other, in view of Lemma~\ref{lm:fr-rear}).

Starting with $(\Zpup, \Zplow)$, we construct, step by step, a sequence of
pairs of symmetric 0-membranes $(M,M^\ast)$ in $Q'$ such that $Q'(M)\supseteq
Q'(M^\ast)$, where $Q'(M')$ denotes that set of cubes of $Q'$ lying below a
0-membrane $M'$. When $Q'(M)= Q'(M^\ast)$, the current $M$ coincides with
$M^\ast$, and we are done.

So assume that $Q'(M)$ strictly includes $Q'(M^\ast)$. To construct the next
pair of 0-membranes, let us say that in $Q'$ a cube $C$ \emph{immediately
0-precedes} a cube $C'$ if $\Cup\cap \Cplow$ is a facet. Like the natural order
on the cubes of a cubillage (defined in Sect.~\SSEC{invers}), one shows that
the relation of immediately 0-preceding is free of directed cycles, and
therefore it determines a partial order on $Q'$, denoted as $\prec_0$. Note
that $C\prec_0 C'$ implies $C'^\ast\prec_0 C^\ast$. Take a maximal w.r.t.
$\prec_0$ cube $C$ in $Q'(M)-Q'(M^\ast)$. Then $\Cup$ is entirely contained in
$M$. By the symmetry, $(C^{\ast})^{\rm low}\subset M^\ast$. Moreover, the cubes
$C$ and $C^\ast$ are different (since $d$ is odd and the colors in $Q'$ are
partitioned into symmetric pairs). Now replacing $\Cup$ by $\Clow$ in $M$, and
$(C^\ast)^{\rm low}$ by $(C^\ast)^{\rm up}$ in $M^\ast$, we obtain a pair
$(M',M'^\ast)$ of symmetric 0-membranes for which $Q'(M')\supseteq Q'(M'^\ast)$
and the gap $Q'(M')-Q'(M'^\ast)$ becomes smaller. This yields the result.
  \end{proof}

 \begin{corollary} \label{cor:sym-exist} For $n,d$ odd, the set $\bfQsym_{n,d}$ is nonempty.
   \end{corollary}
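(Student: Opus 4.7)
The plan is essentially to concatenate the preceding lemma with the symmetric version of the 0-contraction/0-expansion bijection, so the proof should be a one-step assembly rather than a fresh construction.

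First I would reduce the odd-$n$ case to an even-$n$ case of one less color. Since $n$ is odd by hypothesis, $n-1$ is even; hence Corollary~\ref{cor:zon-sym-cub} directly yields a symmetric cubillage $Q'$ on $Z'=Z(n-1,d)$, so $\bfQsym_{n-1,d}$ is nonempty. Next, noting that $d$ is also odd, I would invoke Lemma~\ref{lm:sym-0-membr} applied to this $Q'$ to produce a symmetric 0-membrane $M$ inside $Q'$. At this point I have a pair $(Q',M)$ of exactly the type handled by the symmetric bijection.

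Then I would appeal to the symmetric correspondence~\refeq{sym-contr-expan}: the 0-expansion operation (EXP) turns the pair $(Q',M)$ into a cubillage $Q(Q',M)$ on $Z(n,d)$, and \refeq{sym-contr-expan} guarantees that when both $Q'$ and the 0-membrane $M$ are symmetric, the resulting cubillage belongs to $\bfQsym_{n,d}$. This $Q(Q',M)$ is therefore the required symmetric cubillage, so $\bfQsym_{n,d}\neq\emptyset$.

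There is essentially no obstacle to this proof beyond checking parities: one needs $n-1$ to be even to apply Corollary~\ref{cor:zon-sym-cub}, and $n,d$ both odd to apply Lemma~\ref{lm:sym-0-membr}, and both conditions are furnished by the hypothesis. Since all the real work has been carried out in the preceding lemma and corollary, the proof of Corollary~\ref{cor:sym-exist} will consist only of this short chain of references; I would present it in three or four lines.
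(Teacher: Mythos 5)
Your proof is correct and follows exactly the chain the paper intends: Corollary~\ref{cor:zon-sym-cub} gives a symmetric $Q'$ on $Z(n-1,d)$, Lemma~\ref{lm:sym-0-membr} supplies a symmetric 0-membrane $M$ in $Q'$, and~\refeq{sym-contr-expan} then yields the symmetric cubillage $Q(Q',M)$ on $Z(n,d)$. This is the same argument the paper has in mind (the corollary is stated right after assembling precisely these ingredients), so nothing more is needed beyond perhaps noting the trivial case $d=n$ where $Z(n,n)$ is a single symmetric cube.
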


Now we are going to devise  symmetric flips in $\bfQsym_{n,d}$ (and
$\bfSsym_{n,r}$). Consider a symmetric cubillage $Q$ on $Z=Z(n,d)$ and suppose
that it contains a capsid $\frakD=(X\,|\,T)$ having the anti-standard filling
$\frakDant$. Using the fact that $d$ is odd and arguing as in the proof of
Lemma~\ref{lm:stan-stan}, we observe that the symmetric capsid
$\frakD^\ast=((XT)^\ast\,|\,T^\circ)$ has the anti-standard filling as well.
Two cases are possible.
  \medskip

\noindent\emph{Case 1}: $\frakD$ and $\frakD^\ast$ have no cube in common. Then
we can apply to $Q$ the double lowering flip, by making the replacements
$\frakDant\leadsto \frakDst$ and $\frakDastant\leadsto \frakDastst$. This
results in another symmetric cubillage on $Z$.
  \medskip

\noindent\emph{Case 2}: $\frakD$ and $\frakD^\ast$ share a cube
$C=(X'\,|\,T')$. Note that $\frakD\ne\frakD^\ast$ (for otherwise $\frakD$ must
contain an edge of color 0 and the 0-contraction operation transforms $\frakD$
into a symmetric cube, which is impossible). Then $\frakD\cap\frakD^\ast=C$
(cf. the Claim in the proof of Lemma~\ref{lm:C-Cast}). Hence $C$ is
self-symmetric and $T'=T\cap T^\circ$. Since $|T'|=d$ is odd, $T'$ is of the
form $\{0\}\cup\{p_1,\ldots,p_t\}\cup \{-p_1,\ldots,-p_t\}$, where $t=(d-1)/2$
and $0<p_1<\cdots<p_t$. Also $T=T'\cup\{j\}$ and $T^\circ=T'\cup\{j^\circ=-j\}$
for some $j\ne 0$. One may assume that $X$ is the lowest vertex of
$\frakD\cup\frakD^\ast$ (then $|X|+1=|X'|=|(XT)^\ast|-1$).

An important special case arises when the union of $\frakD$, $\frakD^\ast$ and
some extra cubes of $Q$ forms a symmetric subzonotope in $Z$. It has the bottom
$X$, the color set $T^\cup:=T\cup T^\circ=T'\{j,j^\circ\}$ and is isomorphic to
$Z(\{\xi_i\colon i\in T^\cup\})\simeq Z(d+2,d)$). Using terminology as in
Sect.~\SEC{symm-r-odd}, we call $\frakB$ a \emph{barrel} in $Q$, and refer to
the set of $\binom{d+2}{d}$ cubes of $Q$ occurring in it as the \emph{filling}
of $\frakB$ (where $2d+1$ cubes belong to $\frakD\cup\frakD^\ast$).
\medskip

\noindent\textbf{Definition.} Let $\frakD,\frakD^\ast,\frakB$ be as above. The
\emph{lowering barrel flip} in $Q$ w.r.t. $\frakB$ replaces the filling of
$\frakB$ by the corresponding color-symmetric filling. More precisely, each
face $(X\cup Y\,|\,S)$ of $Q$ within $\frakB$ turns into the face $(X\cup
Y^\circ\,|\,S^\circ)$ (as though making the mirror-reflection w.r.t. the
corresponding hyperplane through $X$).
 \medskip

Under this flip, we obtain again a symmetric cubillage on $Z$. Here the capsid
$\frakD=(X\,|\,T^\circ)$ (having anti-standard filling in $Q$) turns into the
color-symmetric capsid $(X\,|\,T^\circ)$ with the standard filling, and
similarly the capsid $\frakD^\ast=((XT)^\ast\,|\,T^\circ)$ turns into
$((XT)^\ast\,|\,T)$ with the standard filling either.

Thus, flips of both sorts decrease the number of capsids with the anti-standard
filling. Let $\Dscr^+(Q)$ denote the set of such capsids in $Q$. We conjecture
the following.
  \medskip

 \begin{itemize}
\item[(C2)]
Let $n,d$ be odd and let $Q\in\bfQsym_{n,d}$ be such that $\Dscr^+(Q)\ne
\emptyset$. Then there exists a capsid $\frakD\in\Dscr^+(Q)$ such that either
$\frakD,\frakD^\ast$ have no cube in common, or $\frakD,\frakD^\ast$ share a
cube and $\frakD\cup\frakD^\ast$ is extended to a barrel in $Q$ (so $Q$ admits
a lowering double flip in the former case, and a lowering barrel flip in the
latter case).
  \end{itemize}

In light of reasonings above, the validity of (C2) would imply the following
result: for $n,d$ odd, any cubillage in $\bfQsym_{n,d}$ can be connected by a
series of symmetric lowering (double or barrel) flips to the standard cubillage
on $Z(n,d)$ (which is symmetric), yielding the connectedness of
$\bfSsym_{n,d-1}$ via symmetric flips.
\medskip

\noindent\textbf{Remark 4.} For $d$ odd, a symmetric cubillage $Q$ on
$Z(d+2,d)$ can be lifted as a symmetric abstract membrane $M$ in the zonotope
$Z'=Z(d+2,d+1)$. However, $M$ cannot be extended to a \emph{symmetric}
cubillage on $Z'$. Indeed, $Z'$ has exactly two cubillages, standard and
anti-standard ones. They are projections of the  front and rear sides of the
cube $C=Z(n+2,n+2)$, but neither $\Cfr$ nor $\Crear$ is symmetric.

%-----------------------  Appendix C

\section{Appendix 3: Symmetric weak $r$-separation.} \label{sec:weak-r-sep}

In Sect.~\SEC{wcoll} we explained how to devise symmetric flips in maximal
symmetric weakly separated collections (or w-collections) in $2^{[n]}$. The
notion of weak separation is generalized in~\cite{DKK5} to any \emph{odd}
integer $r>0$, where
  \begin{numitem1} \label{eq:weakrsep}
sets $A,B\subseteq [n]$ are called \emph{weakly $r$-separated} if they are
strongly $(r+1)$-separated and, in addition: if there are elements $i_1<
i_2<\cdots<i_{r+2}$ of $[n]$ alternating in $A-B$ and $B-A$, then $|A|\le|B|$
when $A$ surrounds $B$ (equivalently, $i_1,i_{r+2}\in A-B$), and $|B|\le|A|$
when $B$ surrounds $A$.
  \end{numitem1}

Accordingly, a collection $\Wscr\subseteq 2^{[n]}$ is called weakly separated
if any two sets in it are such. When $r=1$, this turns into the notion of
w-collection. An important fact shown in~\cite[Th.~1.1]{DKK5} is that (for $r$
odd) the maximal possible sizes of weakly and strongly $r$-separated
collections in $2^{[n]}$ are the same, denoted as $s_{n,r}$. (Note that to
introduce and study the concept of weak $r$-separation when $r$ is even is a
more sophisticated task; see a discussion in~\cite[Appendix~B]{DKK5}).

In what follows we assume that the number $n$ of colors is \emph{even} (while
$r$ is odd) and denote the set (class) of symmetric weakly $r$-separated
collections $\Wscr\subseteq 2^{[n]}$ whose size $|\Wscr|$ is equal to $s_{n,r}$
by $\bfWsym_{n,r}$. (It should be noted that when $n$ is odd, the maximal size
of a symmetric weakly $r$-separated collection in $2^{[n]}$ need not be equal
to $s_{n,r}$; this is seen already for $r=1$ in Sect.~\SSEC{sw-n-odd}. This
case is omitted here.)

Our approach to devise flips symmetric in $\bfWsym_{n,r}$ is based on the
following result (which in turn is a generalization of a result
in~\cite[Th.~1.7]{LZ} on flips ``in the presence of four witnesses'' for usual
w-collections).

\begin{theorem}[see~\cite{DKK5}] \label{tm:neighbor}
For $r$ odd (and $n$ arbitrary) and for $r':=(r+1)/2$, let
$P=(p_1,\ldots,p_{r'})$ and $Q=\{q_0,\ldots,q_{r'}\}$ consist of elements of
$[n]$ such that $q_0<p_1<q_1< \ldots<p_{r'}<q_{r'}$, and let
$X\subseteq[n]-(P\cup Q)$. Define the sets of  \emph{neighbors} (or
``witnesses'') of $P,Q$ to be $\Nscr(P,Q):=\Nscr^\uparrow_{P,Q}\cup
\Nscr^\downarrow_{P,Q}$, where
  \begin{align}
  \Nscr^\uparrow_{P,Q} &:= \{Pq\colon q\in Q\}\cup \{(P-p)q\colon
  p\in P,\, q\in Q\};& \mbox{and} \nonumber\\
   \Nscr^\downarrow_{P,Q} &:= \{Q-q\colon q\in Q\}\cup \{(Q-q)p\colon
  p\in P,\, q\in Q\}.& \nonumber
  \end{align}
Suppose that a weakly $r$-separated collection $\Wscr\subset 2^{[n]}$ contains
the set $XP=X\cup P$ (resp. $XQ$) and the sets $XS$ for all $S\in\
\Nscr_{P,Q}$. Then the collection obtained from $\Wscr$ by replacing $XP$ by
$XQ$ (resp. replacing $XQ$ by $XP$) is again weakly $r$-separated.
  \end{theorem}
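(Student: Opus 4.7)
The plan is to reduce the theorem to a pairwise verification. Setting $\Wscr' := (\Wscr - \{XP\}) \cup \{XQ\}$, I need to show that $XQ$ is weakly $r$-separated from every $Y \in \Wscr' - \{XQ\}$; the reverse direction of the theorem (replacing $XQ$ by $XP$) would follow by a dual argument, swapping the roles of $P$ and $Q$ and observing that $\Nscr^\uparrow_{P,Q}$ and $\Nscr^\downarrow_{P,Q}$ get exchanged. So I fix $Y \in \Wscr - \{XP\}$ and aim to derive the weak $r$-separation of $(XQ, Y)$ from the hypothesized weak $r$-separation of $(XP, Y)$ and of $(XS, Y)$ for every $S \in \Nscr_{P,Q}$.

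For the strong $(r+1)$-separation part, I would argue by contradiction: suppose there exist indices $i_1 < i_2 < \cdots < i_{r+3}$ alternating between $XQ - Y$ and $Y - XQ$. Since $X$ is disjoint from $P \cup Q$, the sets $XP$ and $XQ$ agree outside $P \cup Q$, so any $i_k \notin P \cup Q$ contributes identically to the symmetric differences $XP \triangle Y$ and $XQ \triangle Y$. The ``shift'' from the $XP$-picture to the $XQ$-picture is therefore confined to $P \cup Q$: each $q \in Q \cap Y$ lies in $Y - XP$ but not in $Y - XQ$; each $q \in Q - Y$ lies in $XQ - Y$ but not in $XP - Y$; symmetric statements hold for $P$. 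Exploiting the strict interleaving $q_0 < p_1 < q_1 < \cdots < p_{r'} < q_{r'}$, I would locate in the offending sequence a single index lying in $P \cup Q$ and swap it for an adjacent element of the other set; because $p_j$ and $q_j$ interleave, such an exchange preserves the strict ordering and the alternation of sides. The resulting sequence then becomes an alternation of length $r+3$ or $r+2$ for the pair $(XP, Y)$ or for some witness pair $(XS, Y)$ with $S \in \Nscr_{P,Q}$, contradicting the hypotheses.

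For the size condition, I would assume $(XQ, Y)$ admits an $(r+2)$-alternation $i_1 < \cdots < i_{r+2}$ with $XQ$ surrounding $Y$ (so $i_1, i_{r+2} \in XQ - Y$) and prove $|XQ| \le |Y|$. Note $|XQ| = |X| + r' + 1$, while the witnesses split into two levels: $|X| + r'$ (for the sets $(P-p)q$ and $Q-q$) and $|X| + r' + 1$ (for the sets $Pq$ and $(Q-q)p$). The same exchange principle applied to this $(r+2)$-alternation would produce an $(r+2)$-alternation for some witness pair $(XS, Y)$ in which $XS$ surrounds $Y$ and $|XS| = |X| + r' + 1 = |XQ|$; weak $r$-separation of $(XS, Y)$ then yields $|XS| \le |Y|$, hence $|XQ| \le |Y|$. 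The case in which $Y$ surrounds $XQ$ is handled symmetrically with the lower-level witnesses of size $|X| + r'$, yielding $|Y| \le |XS| < |XQ|$.

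The main obstacle will be the combinatorial exchange argument: one must show that every alternation of length $\ge r+2$ for $(XQ, Y)$ admits a single-index swap --- exchanging a $q \in Q$ for a neighboring $p \in P$ or vice versa --- that produces a valid alternation for $(XP, Y)$ or for some specified $(XS, Y)$, and moreover with the correct ``surrounding'' orientation when the size condition is at stake. This demands a case analysis on which of $p_1, \ldots, p_{r'}$ and $q_0, \ldots, q_{r'}$ appear in the alternation, which of them lie in $Y$, and the parities of their positions in the alternating pattern. The interleaving hypothesis $q_0 < p_1 < q_1 < \cdots < p_{r'} < q_{r'}$ is precisely what ensures such an exchange always exists, but a uniform and clean formalization is the technical heart of the proof; alternatively, one may invoke the geometric description of weakly $r$-separated collections developed in~\cite{DKK5}, under which the flip $XP \leftrightarrow XQ$ corresponds to swapping the two possible fillings of a local lens-like sub-configuration, and the witnesses $\Nscr_{P,Q}$ encode the boundary vertices of that lens.
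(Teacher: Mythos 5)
The paper does not actually prove this theorem; it is quoted verbatim from~\cite{DKK5} (there Theorem~1.2, which is stated to be sharper), so there is no internal proof here to compare against. Evaluated on its own, your proposal is a strategy outline, not a proof, and it has two concrete gaps.

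The decisive step --- the ``combinatorial exchange argument'' asserting that every alternation of length $\geq r+2$ for $(XQ,Y)$ can be converted by a \emph{single-index swap} into a violating alternation for $(XP,Y)$ or for some witness pair $(XS,Y)$, with the correct surrounding orientation when the size clause is at stake --- is exactly the content of the theorem, and you explicitly leave it open (``a uniform and clean formalization is the technical heart of the proof''). It is not clear a single swap suffices: passing from $XP$ to $XQ$ can simultaneously light up all of $Q-Y$ on the left side, extinguish all of $Q\cap Y$ on the right, extinguish all of $P-Y$ on the left, and light up all of $P\cap Y$ on the right, so the two alternation pictures can differ in up to $2r'+1$ positions, and the interleaving $q_0<p_1<\cdots<q_{r'}$ controls only local adjacency, not which of those many changes actually occur. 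Without carrying out the promised case analysis (or invoking the geometric lens/fragmentation machinery of~\cite{DKK5}, which you mention but also do not develop), the argument does not close.

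A secondary issue: the claimed reduction of the direction ``replace $XQ$ by $XP$'' to the other by ``swapping the roles of $P$ and $Q$'' does not go through as stated, because the hypotheses are asymmetric --- $|P|=r'$, $|Q|=r'+1$, and $Q$ carries both the smallest and largest elements of the interleaving pattern, whereas $P$ sits strictly inside. Exchanging $P$ and $Q$ would violate the format of the theorem's hypothesis. The reverse direction needs to be argued directly (the witness set $\Nscr_{P,Q}$ is the same for both directions precisely because it is \emph{not} $P\leftrightarrow Q$ symmetric), or via a genuine symmetry such as set-complementation in $[n]$, which you would then need to check preserves weak $r$-separation and interchanges the two flip directions.
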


(In fact, Theorem~1.2 in~\cite{DKK5} gives a sharper assertion, but this is not
needed to us.) Note that $\{P,Q\}$ is the only pair in
$\Nscr^+_{P,Q}:=\Nscr_{P,Q}\cup\{P,Q\}$ which is not weakly $r$-separated.

We refer to the replacement $XP\leadsto XQ$ (resp. $XQ\leadsto XP$) as the
\emph{raising} (resp. \emph{lowering}) flip using the \emph{gadget}
$\Gscr=\Gscr_{X,P,Q}:= (X\,|\, \Nscr^+_{P,Q})$. Also we say that $\Gscr$ has
the \emph{root} $X$, \emph{type} $T:=P\cup Q$,  \emph{height} $|XP|=|X|+r'$,
\emph{lower layer} $\Llow_{X,T}:=\{XS\colon S\in\Nscr^+_{P,Q},\, |S|=r'\}$, and
\emph{upper layer} $\Lup_{X,T}:=\{XS\colon S\in\Nscr^+_{P,Q},\, |S|=r+1\}$.
\medskip

Now let $\Wscr$ be symmetric and suppose that it contains the gadget $\Gscr$ as
above. Then $\Wscr$ contains the gadget $\Gscr^\ast$ symmetric to $\Gscr$. One
can check that $\Gscr^\ast$ has the root $(XT)^\ast$ and type $T^\circ$; the
latter is partitioned into the alternating sets $P^\circ$ (of size $r'$) and
$Q^\circ$ (of size $r'+1$). The lower layer $\Llow_{(XT)^\ast,T^\circ}$ of
$\Gscr^\ast$ is symmetric to $\Lup_{X,T}$, and the upper layer
$\Lup_{(XT)^\ast,T^\circ}$ to $\Llow_{X,T}$. Then (in view of~\refeq{AAn})
  \begin{numitem1} \label{eq:GGast}
the height $|(XT)^\ast|+r'$ of $\Gscr^\ast$ is equal to $n-1+(|X|+r')$, the set
$(XT)^\ast P^\circ$ is symmetric to $XQ$, and $(XT)^\ast Q^\circ$ is symmetric
to $XP$.
  \end{numitem1}

\noindent\textbf{Definition.} The symmetric flip in $\Wscr$ using a gadget
$\Gscr=\Gscr_{X,P,Q}$ (and its symmetric gadget $\Gscr^\ast$) consists of the
raising flip w.r.t. one, and the lowering flip w.r.t. the other of $\Gscr$ and
$\Gscr^\ast$, say, $XP\leadsto XQ$ and $(XT)^\ast Q^\circ\leadsto (XT)^\ast
P^\circ$.
  \medskip

A reasonable question is whether these two flips are compatible. This is so,
and the resulting double flip produces again a symmetric weakly $r$-separated
collection, if the gadgets $\Gscr$ and $\Gscr^\ast$ (regarded as subcollections
in $\Wscr$) do not meet. The latter is guaranteed when the difference of their
heights $h_\Gscr:=|X|+r$ and $h_{\Gscr^\ast}:= |(XT)^\ast|+r'$ is greater than
or equal to 2.

Suppose that $\Delta:=|h_\Gscr-h_{\Gscr^\ast}|<2$. Since $n$ is even,
\refeq{GGast} implies that $\Delta=1$. Then, w.l.o.g., we may assume that
$h_{\Gscr^\ast}=h_\Gscr+1$. In other words, the heights of the upper layer of
$\Gscr$ and the lower layer of $\Gscr^\ast$ are equal. Nevertheless, in this
case, if the transformation consists of the raising flip in $\Gscr$ and the
lowering flip in $\Gscr^\ast$, then no conflict can arise (since the former
flip preserves both layers of $\Gscr^\ast$).

On the other hand, the lowering flip $XQ\leadsto XP$ in $\Gscr$ may affect
$\Gscr^\ast$. This happens if $XQ$ belongs to the lower layer of $\Gscr^\ast$,
in which case $XQ$ disappears, the lower layer of $\Gscr^\ast$ decreases, and
we cannot appeal to Theorem~\ref{tm:neighbor} with $\Gscr^\ast$. We conjecture
that

  \begin{itemize}
\item[(C3):]
If $h_{\Gscr^\ast}=h_\Gscr+1$, then the set $XQ$ does not belong to
$\Llow_{(XT)^\ast,P^\circ}$.
  \end{itemize}

Subject to the validity of~(C3), it is reasonable to raise the next conjecture:

  \begin{itemize}
\item[(C4):]
For $r$ odd and $n$ even, any two collections in $\bfWsym_{n,r}$ can be
connected by a series of symmetric flips using gadgets as above.
  \end{itemize}

\noindent\textbf{Remark 5.} We know (cf.~\refeq{cub-dsepar}) that any
size-maximal strongly $r$-separated collection in $2^{[n]}$ is representable,
i.e., it is viewed as the vertex set of a cubillage on $Z(n,r+1)$ or,
equivalently, of a (strong) membrane of some cubillage on $Z(n,r+2)$. In
contrast, it is open at present, whether any size-maximal weakly $r$-separated
collection in $2^{[n]}$ is representable, in the sense that it forms the vertex
set of a \emph{weak membrane} in the \emph{fragmentation} of some cubillage on
$Z(n,r+2)$  (for definitions, see~\cite[Sec.~6]{DKK5}, where also the above
open question is stated as a conjecture). In light of this, one can simplify
verification of~(C3) and weaken~(C4), by restricting ourselves by the
(sub)class of representable collections in $\bfWsym_{n,r}$; this would enable
us to use a geometric interpretation of gadgets (which are associated with some
``central fragments'' of cubes of odd dimensions).

\end{document}